\documentclass{amsart}

\usepackage[T1]{fontenc}
\usepackage{tikz-cd} 
\usepackage[a4paper,top=2.54cm,bottom=2.54cm,left=3cm,right=3cm,marginparwidth=1.75cm]{geometry}

\usepackage{amsmath,amssymb,amsfonts,amsthm,upgreek}
\usepackage{mathrsfs, mathtools}
\usepackage{graphicx}
\usepackage{slashed}
\usepackage[colorinlistoftodos,prependcaption, textsize=small, textwidth=2.5cm]{todonotes}
\usepackage[all]{xy}
\usepackage{cancel}
\usepackage{appendix}
\usepackage{enumitem}
\usepackage{multicol}
\setlength{\marginparwidth}{2cm}

\usepackage[colorlinks=true, allcolors=blue,backref=page]{hyperref}
\usepackage[initials,alphabetic]{amsrefs}






\newcommand{\bN}{{\bf N}}


\newcommand{\cC}{\mathcal{C}}

\newcommand{\cK}{\mathcal{K}}
\newcommand{\cL}{\mathcal{L}}

\newcommand{\cP}{\mathcal{P}}




\newcommand{\slS}{\slashed{S}}


\newcommand{\N}{\bN}

\newcommand{\R}{\mathbb{R}}


\newcommand{\Spin}{{\rm Spin}}

\newcommand{\GL}{\mathrm{GL}}


\renewcommand{\epsilon}{\varepsilon}

\newcommand{\Ric}{{\rm Ric}}

\newcommand{\curl}{\mathop{\mathrm{curl}}}

\newcommand{\vol}{\mathrm{vol}}

\newcommand{\Sym}{\mathrm{Sym}}

\newcommand{\Bl}{\math\end{lemma}rm{Bl}}

\newcommand{\Div}{\mathop{\mathrm{div}}}

\def\<{\mathopen{}\left<}
\def\>{\right>\mathclose{}}
\def\({\mathopen{}\left(}
\def\){\right)\mathclose{}}

\usepackage{multicol, color}

\definecolor{gold}{rgb}{0.85,.66,0}
\definecolor{cherry}{rgb}{0.9,.1,.2}
\definecolor{burgundy}{rgb}{0.8,.2,.2}
\definecolor{orangered}{rgb}{0.85,.3,0}
\definecolor{orange}{rgb}{0.85,.4,0}
\definecolor{olive}{rgb}{.45,.4,0}
\definecolor{lime}{rgb}{.6,.9,0}
\definecolor{green}{rgb}{.2,.7,0}
\definecolor{grey}{rgb}{.4,.4,.2}
\definecolor{brown}{rgb}{.4,.3,.1}


\newtheorem{theorem}{Theorem}
\newtheorem{proposition}{Proposition}

\newtheorem{lemma}{Lemma}

\theoremstyle{remark}
\newtheorem{remark}{Remark}

\theoremstyle{definition}
\newtheorem{definition}{Definition}

\title{Nearly \texorpdfstring{$G_2$}{G2}-manifolds and \texorpdfstring{$G_2$}{G2}-Laplacian co-flows}
\author{Jason D. Lotay}
\author{Jakob R. Stein}
\date{\today}

\begin{document}

\begin{abstract}
Nearly $G_2$-structures define positive Einstein metrics in $7$ dimensions and are critical points, up to scale, for a geometric flow of co-closed $G_2$-structures with good analytic properties called the modified $G_2$-Laplacian co-flow.  We introduce a suitable normalization of this flow so that nearly $G_2$-structures are stable under rescaling.  However, we show that many nearly $G_2$-structures are unstable for this flow: specifically, all those naturally arising from 3-Sasakian geometry.  In particular, we demonstrate that the standard nearly $G_2$-structure on the round 7-sphere is an unstable critical point with high index.   
\end{abstract}
\maketitle
\setcounter{tocdepth}{2}
\tableofcontents
\section{Introduction}\label{sec:intro}

\subsection{Geometric flows and stability} In the study of geometric flows, one of the basic questions is whether a critical point for the flow is stable or not.  For example, unstable critical points typically pose a technical challenge for the flow process if one wants to find such critical points, whereas stable critical points motivate the pursuit of long-time existence and convergence results.  

Often, one is interested not just in critical points, but critical points up to rescaling: for example, if one starts the Ricci flow on a sphere, one might expect the sphere to shrink to a point, but after rescaling, the flow converges to the round sphere.  One way to achieve this rescaling is to normalize the flow: in the case of Ricci flow, the critical points become Einstein metrics with a given Einstein constant, rather than just Ricci-flat metrics.   It then becomes important to study the stability question for the critical points of this normalized geometric flow.  

This is the problem  we study in this article, in the context of \emph{nearly} (or \emph{nearly parallel}) $G_2$-structures (n$G_2$) on compact $7$-manifolds. These are critical points of suitably normalized geometric flows of co-closed $G_2$-structures, known as \emph{$G_2$-Laplacian co-flows}.  Since n$G_2$-structures define Einstein metrics with a positive Einstein constant, they are of significant interest in both $G_2$-geometry and Riemannian geometry more broadly.  Moreover, this relationship allows us to compare and contrast our results for $G_2$-Laplacian co-flows with well-known results for the Ricci flow.

\subsection{Main results} Our first key result identifies a necessary condition for stability of an n$G_2$-structure under a normalized $G_2$-Laplacian co-flow in terms of \emph{spectral data}: namely, the non-existence of eigenvalues of a certain first-order self-adjoint linear differential operator in a specified range.  We then use this criterion to show that even the round 7-sphere is \emph{unstable}, and is a high index critical point, which is in marked contrast to the situation for Ricci flow.  

Our second main result is to demonstrate that every n$G_2$-structure canonically arising from \emph{$3$-Sasakian geometry} is in fact \emph{unstable} for the same normalized $G_2$-Laplacian co-flow as above.   Moreover, we can explicitly identify a direction of instability, showing that each n$G_2$-structure has index at least one, as a critical point for the normalized flow. Again, this contrasts with Ricci flow, where one of the two canonical Einstein metrics on a 3-Sasakian 7-manifold is stable within the natural family of Riemannian metrics, whereas the other ``squashed'' Einstein metric is unstable.

Altogether, we demonstrate that $G_2$-Laplacian co-flows are not well-suited to the study of n$G_2$-structures, despite their promising geometric properties.  On the other hand, since n$G_2$-structures can model possible singularity formation in $G_2$-Laplacian co-flows, their instability suggests that one may be able to avoid such potential singularities through suitable perturbations.

\subsection{Nearly \texorpdfstring{$G_2$}{G2}-structures} A \emph{$G_2$-structure} on a 7-manifold $M$ is defined by a 3-form $\varphi$ satisfying a non-degeneracy condition: see \S \ref{sec:G2structures} for details.  Such a 3-form defines a metric $g$ and orientation on $M$, so we naturally have a Hodge star and thus Hodge dual $\psi$ of $\varphi$.  A $G_2$-structure is \emph{co-closed} if $d\psi=0$, which then means that 
\begin{equation}\label{eq:coclosed}
    d\varphi=\tau_0\psi+*\tau_3
\end{equation}
where $\tau_0$ is a function and $*\tau_3\in\Omega^4_{27}$, i.e.~$\tau_3$ is a 3-form satisfying $\tau_3\wedge\varphi=0$ and $\tau_3\wedge\psi=0$.  A special sub-class of the co-closed $G_2$-structures are the \emph{nearly} $G_2$-structures (n$G_2$) that satisfy
\begin{equation}\label{eq:nG2}
    d\varphi=\tau_0\psi
\end{equation}
where $\tau_0\in\R\setminus\{0\}$ is constant.  As mentioned above, the associated metric to an n$G_2$-stucture as in \eqref{eq:nG2} is Einstein with Einstein constant a positive multiple of $\tau_0^2$, and so n$G_2$-structures are of considerable importance. We exclude the \emph{torsion-free} case where $d\varphi=0$ and $d\psi=0$, since this leads to metrics with holonomy contained in $G_2$ (and thus Ricci-flat) which have a rather different character.

\subsection{\texorpdfstring{$G_2$}{G2}-Laplacian co-flows} Given that n$G_2$-structures are co-closed and define positive Einstein metrics, one is motivated to ask whether they are critical points of a (natural) geometric flow of co-closed $G_2$-structures, suitably normalized.  

One such candidate is the \emph{$G_2$-Laplacian co-flow} proposed in \cite{Karigiannis2012}\footnote{The flow originally presented in \cite{Karigiannis2012} has the opposite sign for the velocity of the flow, which turns out not to be the correct choice for analytic purposes.}:
\begin{equation}\label{eq:LCF}
    \frac{\partial\psi}{\partial t}=\Delta_{\varphi}\psi,
\end{equation}
where 
$\psi$ is the Hodge dual of a $G_2$-structure $\varphi$, $d\psi=0$ (which is preserved by the flow) and $\Delta_{\varphi}$ is the Hodge Laplacian associated to $\varphi$.  We see that n$G_2$-structures as in \eqref{eq:nG2} are indeed critical points for \eqref{eq:LCF}, up to rescaling, and thus they are critical points for the \emph{normalized $G_2$-Laplacian co-flow}:
\begin{equation}\label{eq:nLCF}
    \frac{\partial\psi}{\partial t}=\Delta_{\varphi}\psi-\kappa^2\psi,
\end{equation}
where $\kappa\in\R\setminus\{0\}$ is chosen so that the n$G_2$ critical point will have $\tau_0=\kappa$ (up to sign).

The flow \eqref{eq:LCF} turns out to be the positive gradient flow of the Hitchin volume functional \cites{Hitchin2001a,Hitchin2001} on a compact manifold within the cohomology class of the initial  value for $\psi$ \cite{Grigorian2013}. Its critical points, which are the torsion-free $G_2$-structures, are strict maxima modulo diffeomorphisms, so \eqref{eq:LCF} is geometrically well-motivated.  Unfortunately, \eqref{eq:LCF} is not yet known to be analytically well-behaved: in fact, short-time existence is still not known because the flow is not even weakly parabolic (i.e.~parabolic modulo the action of diffeomorphisms).

To address these analytic issues, a \emph{modified $G_2$-Laplacian co-flow} was introduced in \cite{Grigorian2013}:
\begin{equation}\label{eq:MLCF}
    \frac{\partial\psi}{\partial t}=\Delta_{\varphi}\psi+\frac{1}{2}d\big((5\kappa-7\tau_0)\varphi\big),
\end{equation}
where $\kappa\in\R$, $\psi$ is the Hodge dual to a co-closed $G_2$-structure $\varphi$ as before, and thus the function $\tau_0$ is given by \eqref{eq:coclosed}.  

It is shown in \cite{Grigorian2013} that \eqref{eq:MLCF} is now weakly parabolic and has short-time existence.  Torsion-free $G_2$-structures are still critical points, and they  are furthermore \emph{stable} for the flow \eqref{eq:MLCF} if one takes $\kappa=0$ by \cite{Vezzoni2020}. If one takes $\kappa\neq 0$ then one obtains an additional critical point given by an n$G_2$-structure, which is always unstable simply under rescaling \cite{Vezzoni2020}. 

Given these observations one is motivated to normalize the flow \eqref{eq:MLCF} for co-closed $G_2$-structures to account for rescaling as follows:
\begin{equation}\label{eq:nMLCF}
    \frac{\partial\psi}{\partial t}=\Delta_{\varphi}\psi+\frac{1}{2}d\big((5\gamma\kappa-7\tau_0)\varphi\big)+\frac{5}{2}(1-\gamma)\kappa^2\psi,
\end{equation}
where $\kappa\neq 0$, $\tau_0$ is as in \eqref{eq:coclosed} and  $\gamma>2$ is a real constant.   
 The normalization of \eqref{eq:MLCF} in \eqref{eq:nMLCF} is essentially the same as the normalization of \eqref{eq:LCF} in \eqref{eq:nLCF}: we have just used the freedom to modify the constant in \eqref{eq:MLCF} so that an n$G_2$ structure as in \eqref{eq:nG2} with $\tau_0=\kappa$ is a critical point of \eqref{eq:nMLCF}.  The choice $\gamma>2$, explained further in \S \ref{sec:MLCF},  guarantees that a n$G_2$-structure with $\tau_0=\kappa$ is a stable critical point of \eqref{eq:nMLCF} under rescalings of the $G_2$-structure.  
 \begin{remark} Note that setting $\gamma=1$ in \eqref{eq:nMLCF} recovers \eqref{eq:MLCF} and we may always choose $\kappa>0$ without loss of generality.
 \end{remark}

\subsection{Summary} In \S \ref{sec:preliminaries}, we briefly review $G_2$-structures and some key formulae. In \S \ref{sec:nG2} we obtain a useful decomposition of the exact 4-forms when $(M,\varphi)$ is compact n$G_2$:  these 4-forms describe the possible tangent directions for our $G_2$-Laplacian co-flows near the n$G_2$-structure.  This decomposition is achieved by studying properties of certain first order differential operators on compact n$G_2$ manifolds.\footnote{Related results can also be found in \cite{SoleFarre2024}.}

In \S \ref{sec:MLCF} we first present our stability criterion.

\begin{theorem}  \label{thm:lin.stab}
The flow \eqref{eq:nMLCF} is linearly stable at an n$G_2$-structure with $\tau_0=\kappa$ only if the operator $d*$ acting on the exact forms in $\Omega^4_{27}$ has no eigenvalues $\kappa\mu$ such that
\begin{equation}\label{eq:eigenvalue.bounds}
   -\tfrac{5}{2}(\gamma-1)<\mu<-1.
\end{equation}
\end{theorem}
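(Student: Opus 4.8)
The plan is to carry out a standard linear stability analysis: linearize the right-hand side of \eqref{eq:nMLCF} at the given n$G_2$-structure, restrict the resulting self-adjoint operator to the space of admissible variations, and translate the sign condition for stability into a spectral condition. First I would pin down the tangent directions. Since \eqref{eq:nMLCF} preserves $d\psi=0$ and its right-hand side is exact, the flow stays within the cohomology class of $\psi$, so the variations $\dot\psi$ are \emph{exact} $4$-forms. Using the decomposition of exact $4$-forms on a compact n$G_2$-manifold obtained in \S\ref{sec:nG2}, together with the variation formula relating $\dot\psi=*\dot\varphi$ to $\dot\varphi$ under the $G_2$-representation splitting, I would reduce to the component lying in $\Omega^4_{27}$; the $\Omega^4_1$ and $\Omega^4_7$ directions correspond to rescaling and to the diffeomorphism gauge, which are accounted for separately (the choice $\gamma>2$ is exactly what fixes the rescaling direction, per \S\ref{sec:MLCF}). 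On this subspace the linearized operator is self-adjoint, so linear stability is equivalent to non-positivity of its spectrum.

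The key structural simplification is that on a \emph{closed} $4$-form $\beta$ in dimension $7$ one has $d^*\beta=\pm*d*\beta$, hence
\begin{equation*}
\Delta_\varphi\beta=dd^*\beta=\pm(d*)^2\beta,
\end{equation*}
so the second-order Hodge Laplacian on the exact tangent directions is the square of the first-order operator $d*$. Thus, after verifying (via \S\ref{sec:nG2}) that $d*$ restricts to a self-adjoint endomorphism of the exact forms in $\Omega^4_{27}$, the whole stability operator can be expressed as a polynomial in $d*$ of degree two. Concretely, I would linearize the three terms of \eqref{eq:nMLCF} separately: the term $\Delta_\varphi\psi$ contributes both $\Delta_\varphi\dot\psi=\pm(d*)^2\dot\psi$ and the variation $(\dot\Delta_\varphi)\psi$ coming from the dependence of the metric and Hodge star on $\varphi$; the modification term $\tfrac12 d\big((5\gamma\kappa-7\tau_0)\varphi\big)$ contributes a second-order piece through the variation $\dot\tau_0$ (which is first order in $\dot\varphi$, hence in $d*$) --- this is precisely the piece restoring parabolicity --- together with lower-order $\gamma$-dependent terms; and the normalization term $\tfrac52(1-\gamma)\kappa^2\psi$ contributes the zeroth-order multiple $\tfrac52(1-\gamma)\kappa^2\dot\psi$. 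Assembling these, and using the n$G_2$ structure equations $d\varphi=\kappa\psi$, $d\psi=0$ to evaluate all the derivatives of $\varphi$ and $\psi$ that appear, yields the linearized operator as an explicit quadratic in $d*$.

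With the operator written as a quadratic in $d*$, I would diagonalize by taking an eigenform $\beta$ with $d*\beta=\kappa\mu\beta$ (the eigenvalues scale like $\kappa$ because the natural scale of the geometry is set by $\tau_0=\kappa$). The associated eigenvalue $\lambda(\mu)$ of the linearized flow operator is then a scalar quadratic in $\mu$; since the principal part $\pm(d*)^2$ must appear with the sign making the modified flow parabolic, $\lambda(\mu)$ is a downward-opening parabola. The content of the theorem is that its two roots are exactly $\mu=-1$ and $\mu=-\tfrac52(\gamma-1)$, so that
\begin{equation*}
\lambda(\mu)>0 \quad\Longleftrightarrow\quad -1>\mu>-\tfrac52(\gamma-1),
\end{equation*}
where the inequalities are in this order because $\gamma>2$ forces $-\tfrac52(\gamma-1)<-1$. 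An eigenvalue $\kappa\mu$ of $d*$ with $\mu$ in this open interval therefore produces a genuinely unstable mode, so linear stability forces the absence of such eigenvalues, which is the claimed necessary condition.

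The main obstacle I anticipate is the honest computation of the two root locations, which amounts to tracking the lower-order (in $d*$) contributions precisely. This requires the full variation formulas for the Hodge star, for the Hodge Laplacian $\Delta_\varphi$, and for the torsion function $\tau_0$ under a variation of $\varphi$, each decomposed according to the $G_2$-representations and then simplified using $d\varphi=\kappa\psi$; the root at $-1$ should come from the interplay of $\Delta_\varphi$ with the $-\tfrac72 d(\tau_0\varphi)$ modification and be $\gamma$-independent, while the root at $-\tfrac52(\gamma-1)$ should be produced by the $\gamma$-dependent normalization pieces. The representation-theoretic identities established in \S\ref{sec:preliminaries} and \S\ref{sec:nG2} are what make this bookkeeping tractable and guarantee that the reduction to a scalar problem in $\mu$ is exact on the exact $\Omega^4_{27}$ subspace.
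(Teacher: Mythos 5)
Your overall strategy coincides with the paper's proof: the paper likewise gauge-fixes with a DeTurck vector field (the operator $\hat Q$ in \eqref{eq:hatQ}), restricts to $\Omega^4_{27,\mathrm{exact}}$ --- which is preserved by $d*$ by Proposition \ref{prop:d*splitting} --- uses that $\Delta\dot\psi=(d*)^2\dot\psi$ on closed $4$-forms, and reads off stability from the sign of a quadratic in the $d*$-eigenvalue, namely $-\kappa^2(\mu+1)\bigl(\mu+\tfrac52(\gamma-1)\bigr)$ in Lemma \ref{lemma:eigenvalue}. So there is no disagreement of method, and your reduction to a scalar spectral problem is exactly how the argument is organised.

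The genuine gap is that the decisive quantitative step is asserted rather than derived. You write that ``the content of the theorem is that its two roots are exactly $\mu=-1$ and $\mu=-\tfrac52(\gamma-1)$'', i.e.\ you read the root locations off the statement you are trying to prove. Everything hinges on the coefficient of the first-order term $d*$ in the gauge-fixed linearisation restricted to $\Omega^4_{27}$, and you never compute it. In the paper this is Proposition \ref{prop:linearization}: a full variation computation (of the Hodge star, of $\tau_0$, of $\Delta_\varphi$, assembled via Lemma \ref{lemma:formulae}) showing that, after the DeTurck correction, the linearised operator acts on $\Omega^4_{27,\mathrm{exact}}$ as $-(d*)^2+\tfrac{\kappa}{2}\lambda_{27}\,(d*)+\tfrac52(1-\gamma)\kappa^2$ with $\lambda_{27}=3-5\gamma$; it is precisely this value of $\lambda_{27}$ that makes the quadratic factor as $-(\mu+1)\bigl(\mu+\tfrac52(\gamma-1)\bigr)$. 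Without that computation your downward parabola has unknown roots and the specific bounds in \eqref{eq:eigenvalue.bounds} do not follow; flagging this as ``the main obstacle'' is honest, but it is the proof. A second, more minor, inaccuracy: the complement of $\Omega^4_{27,\mathrm{exact}}$ inside the exact $4$-forms is $d(\Omega^3_1\oplus\Omega^3_7)$ (Proposition \ref{prop:exact.4forms.decomp}), which is infinite-dimensional and is not merely ``rescaling plus diffeomorphism gauge'' --- its $d*$-spectrum is nontrivial and is analysed in Proposition \ref{prop:Omega417}. This mischaracterisation is harmless here only because the theorem is an ``only if'' statement, so it suffices to exhibit unstable modes inside $\Omega^4_{27,\mathrm{exact}}$; it would matter if you wanted a sufficient condition for stability.
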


\begin{remark} It is interesting to compare Theorem \ref{thm:lin.stab} with destabilising directions for the Einstein-Hilbert functional
at the Einstein metric induced by the n$G_2$-structure with $\tau_0=\kappa$, on metrics of fixed volume and constant scalar curvature,   cf. \cite{Koiso1980}*{Definition 2.7}, \cite{Wang2023}*{Definition 1.1}. By \cite{Semmelmann2024}*{Proposition 2.3}, the component of such directions in exact forms in $\Omega^4_{27}$ are  eigenvalues $\kappa\mu$ of the operator $d*$ satisfying: 
\begin{equation*}
    \tfrac{1}{2}> \mu> -1.
\end{equation*}
In particular, these are all stable directions of \eqref{eq:nMLCF}. 
\end{remark}

\noindent We then study this operator $d*$ on the standard homogeneous n$G_2$ structure for the constant curvature one $7$-sphere, which has $\tau_0=4$ in \eqref{eq:nG2}, and deduce the following theorem using representation theory, and our study of exact 4-forms in \S \ref{sec:nG2}.

\begin{theorem}\label{thm:index}
 The standard homogeneous n$G_2$-structure on the 7-sphere inducing the constant curvature one metric is unstable for \eqref{eq:nMLCF} with $\kappa=4$, with index of order at least $10^3$.    
\end{theorem}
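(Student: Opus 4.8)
The plan is to deduce Theorem~\ref{thm:index} from the stability criterion of Theorem~\ref{thm:lin.stab} by computing the spectrum of the first-order operator $d*$ on exact forms in $\Omega^4_{27}$ using representation theory. First I would realise the round sphere as the homogeneous space $S^7=\Spin(7)/G_2$, so that the standard n$G_2$-structure is the $\Spin(7)$-invariant one and the reductive splitting $\spin(7)=\mathfrak{g}_2\oplus\fm$ identifies the isotropy representation $\fm$ with the standard $7$-dimensional $G_2$-representation. Under this identification the bundle $\Lambda^4_{27}T^*M$ is associated to the $27$-dimensional irreducible $G_2$-representation $W=\Sym^2_0\R^7$, and since $\tau_0=4$ for this structure, Theorem~\ref{thm:lin.stab} reduces the claim to exhibiting at least $10^3$ eigenvalues (counted with multiplicity) of $d*$ on exact $\Omega^4_{27}$-forms whose value $4\mu$ satisfies $\mu\in(-\tfrac52,-1)$; indeed, for every $\gamma>2$ one has $-\tfrac52(\gamma-1)<-\tfrac52$, so this window lies inside the instability range~\eqref{eq:eigenvalue.bounds} uniformly in $\gamma$.

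Next I would decompose the space of sections $\Gamma(\Lambda^4_{27}T^*M)$ into $\Spin(7)$-isotypic components. By the Peter--Weyl theorem and Frobenius reciprocity this reads $\bigoplus_\lambda V_\lambda\otimes\Hom_{G_2}(V_\lambda|_{G_2},W)$, where $V_\lambda$ runs over the irreducible representations of $\Spin(7)$ and the multiplicities are governed by the branching $\Spin(7)\downarrow G_2$. Because $d*$ is $\Spin(7)$-invariant it preserves each isotypic block and acts there through the finite-dimensional multiplicity space $\Hom_{G_2}(V_\lambda|_{G_2},W)$. Evaluating the operator at the identity coset turns this into a purely algebraic map built from the zeroth-order action of $\fm$ on the fibre $W$ composed with the $\spin(7)$-action on $V_\lambda$; its eigenvalues are then expressible through the Casimir eigenvalues of $V_\lambda$ and of the $G_2$-constituents of $V_\lambda|_{G_2}$, together with the constant $\tau_0=4$ measuring the torsion. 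I would also invoke the results of \S\ref{sec:nG2} to cut down to the \emph{exact} $\Omega^4_{27}$-forms (legitimate since $b_4(S^7)=0$ forces all closed positive-degree forms to be exact), and to confirm that $d*$ is self-adjoint and preserves this subspace.

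With the eigenvalue formula in hand, I would run through the low-lying $\Spin(7)$-representations, compute the eigenvalues of $d*$ on each, and isolate those landing in $(-10,-4)$. Since the dimensions of $\Spin(7)$-irreducibles grow rapidly, I expect a single suitable family (or a small number of them) to already contribute total multiplicity exceeding $10^3$, which yields the stated index bound.

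The main obstacle is the representation-theoretic computation in the middle step: one must correctly branch $\Spin(7)\downarrow G_2$ to pin down the multiplicities $\dim\Hom_{G_2}(V_\lambda|_{G_2},W)$, correctly identify the algebraic form of $d*$ at the basepoint (disentangling the several $G_2$-isotypic constituents that appear in each $V_\lambda|_{G_2}$ and on which $d*$ may mix), and then verify both that the resulting eigenvalues genuinely fall in the window $(-10,-4)$ and that the accumulated dimensions surpass $10^3$. Restricting cleanly to the exact subspace --- so that no spurious closed-but-not-exact or coclosed contributions inflate or deflate the count --- is the delicate bookkeeping point, and is where the structural results of \S\ref{sec:nG2} are essential.
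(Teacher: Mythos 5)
Your opening reduction is correct and coincides with the paper's first move: with $\kappa=4$, every eigenvalue $4\mu$ of $d*$ on $\Omega^4_{27,\mathrm{exact}}$ with $\mu\in(-\tfrac52,-1)$ contributes to the index for all $\gamma>2$ (though strictly you need Lemma \ref{lemma:eigenvalue}, whose quadratic-form computation shows each such eigendirection is genuinely destabilizing, rather than Theorem \ref{thm:lin.stab}, which as stated is only a necessary condition for stability and by itself does not give a dimension count). The genuine gap is everything after that: you never exhibit a single eigenvalue of $d*$ on $\Omega^4_{27,\mathrm{exact}}$ in the window $(-10,-4)$, let alone a total multiplicity exceeding $10^3$; this entire quantitative content --- which \emph{is} the theorem --- is deferred to ``I would run through the low-lying $\Spin(7)$-representations\ldots I expect a single suitable family\ldots to contribute total multiplicity exceeding $10^3$.'' Worse, your proposed machinery has a concrete unresolved obstruction at exactly that point: on exact $4$-forms $(d*)^2=dd^*=\Delta$, so Casimir eigenvalues and branching data for $\Spin(7)\downarrow G_2$ can determine the $d*$-eigenvalues only up to sign, and it is precisely the sign (landing in the \emph{negative} window) that decides instability. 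Extracting the signs, and disentangling how $d*$ mixes the blocks when $\dim\Hom_{G_2}(V_\lambda|_{G_2},W)>1$, requires the explicit equivariant first-order computation that the paper deliberately avoids as ``lengthy''; nor can you borrow the usual symmetric-spectrum trick inside your framework, since the orientation-reversing isometries of $S^7$ that flip the sign of $d*$ do not preserve $\varphi$ and hence do not preserve the subbundle $\Lambda^4_{27}$.

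The paper's actual proof (Theorem \ref{thm:eigenvalues}) sidesteps $\Spin(7)$-harmonic analysis entirely. It works with the full isometry group $O(8)$, quoting B\"ar's computation of the spectrum of $d*$ on all of $\Omega^4(S^7)$: the negative eigenvalues are $-(4+l)$, $l\in\N$, with explicit multiplicities $d$, the sign information coming from an orientation-reversing isometry on the full form bundle. It then uses Propositions \ref{prop:d*splitting} and \ref{prop:Omega417}: $d*$ preserves the splitting $\Omega^4_{\mathrm{exact}}=d(\Omega^3_1\oplus\Omega^3_7)\oplus\Omega^4_{27,\mathrm{exact}}$, and its eigenspaces on the first summand are identified with function-Laplacian eigenspaces and curl eigenspaces, whose dimensions $d_0,d_1$ are bounded by the Weyl character formula. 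The difference $d-d_0-d_1$ is then a lower bound for the multiplicity on $\Omega^4_{27,\mathrm{exact}}$, and summing over $3\le l\le 6$ gives $7047$. If you want to repair your plan, the efficient fix is to adopt this subtraction argument rather than compute the equivariant operator; note that your uniform, $\gamma$-independent window $(-10,-4)$ restricts the admissible range to $1\le l\le 5$, which still yields $160+693+1904=2757>10^3$ from $l=3,4,5$, so that simplification of yours is harmless --- the missing ingredient is solely the spectral data itself.
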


Finally, in \S \ref{sec:3Sak}, we study the case of 3-Sasakian 7-manifolds which admit two natural 3-parameter families of co-closed $G_2$-structures $\varphi_{\pm}$, depending on a choice of $\pm 1$, with dual 4-forms $\psi_{\pm}$.  Given $\kappa>0$ there is a unique n$G_2$-structure with dual 4-form $\psi_{\pm}^{\kappa}$ within each family.  

We first show the following, which is almost immediate from work in \cite{KennonLotay2023}. 

\begin{theorem}\label{thm:3Sak.stab}
Given $\kappa>0$ and any initial condition in the family $\psi_{\pm}$, the flow \eqref{eq:nLCF} exists for all $t>0$ and converges to $\psi_{\pm}^{\kappa}$.  Hence, $\psi^{\kappa}_{\pm}$ is stable for \eqref{eq:nLCF} within the family $\psi_{\pm}$. 
\end{theorem}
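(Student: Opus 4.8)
The plan is to reduce Theorem~\ref{thm:3Sak.stab} to the convergence analysis already carried out in \cite{KennonLotay2023}. The key observation is that the flow \eqref{eq:nLCF} restricted to the $3$-parameter family $\psi_{\pm}$ becomes an autonomous system of ordinary differential equations on the parameter space, because the family is finite-dimensional and preserved by the flow. First I would verify that the normalized flow \eqref{eq:nLCF} preserves each family $\psi_{\pm}$: one must check that for a co-closed $G_2$-structure $\varphi$ of the form $\varphi_{\pm}$ with parameters in the admissible range, the right-hand side $\Delta_{\varphi}\psi - \kappa^2\psi$ remains tangent to the family, i.e.\ that $\Delta_\varphi \psi$ lies in the span of the infinitesimal deformations generated by varying the three parameters. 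This should follow from the explicit structure of $3$-Sasakian geometry: the relevant $G_2$-forms are built from the contact and Reeb data via algebraic combinations, and the Hodge Laplacian acts on these in a way that respects the family. I expect this reduction step to be where the real content lies, and I would lean on the computations in \cite{KennonLotay2023} to identify the resulting ODE system.

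Having reduced to a finite-dimensional ODE, the next step is to analyze the flow within the parameter space. Since \eqref{eq:nLCF} differs from \eqref{eq:LCF} only by the scaling term $-\kappa^2\psi$, and the n$G_2$-structure $\psi_{\pm}^\kappa$ is designed to be the critical point with $\tau_0=\kappa$, I would show that $\psi_{\pm}^\kappa$ is the unique fixed point of the reduced system in the relevant region of parameter space. This uses that the normalization term precisely cancels the scaling instability: the remark after \eqref{eq:nMLCF} and the discussion in the introduction indicate that n$G_2$-structures with $\tau_0=\kappa$ are stable under rescaling for the normalized flow, so the degenerate direction of \eqref{eq:LCF} is removed.

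I would then establish long-time existence and convergence. On a finite-dimensional invariant family, short-time existence is automatic from the ODE theory, so the task is to rule out finite-time blow-up and to prove convergence to $\psi_{\pm}^\kappa$. The cleanest route is to exhibit a monotone quantity or a Lyapunov function for the reduced system---plausibly derived from the Hitchin volume functional restricted to the family, or from a suitable combination of the parameters---whose gradient-like behaviour forces trajectories toward the unique fixed point. Alternatively, if \cite{KennonLotay2023} already establishes the phase-portrait of the (unnormalized or differently normalized) flow on this family, I would simply transfer that analysis: conjugating by the rescaling and matching the fixed point at $\tau_0=\kappa$ converts their convergence statement into the one claimed here. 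The final conclusion, that $\psi_{\pm}^\kappa$ is stable within the family, is then immediate since every nearby initial condition in $\psi_{\pm}$ flows to $\psi_{\pm}^\kappa$.

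The main obstacle I anticipate is verifying that \eqref{eq:nLCF} genuinely preserves the families $\psi_{\pm}$ and computing the induced ODE explicitly enough to pin down the fixed point and its basin of attraction. This requires a careful reading of how $\Delta_\varphi$ interacts with the $3$-Sasakian ansatz; however, the phrase ``almost immediate from work in \cite{KennonLotay2023}'' strongly suggests that the hard computational step---namely identifying the reduced dynamical system and its convergence properties---has essentially been done there, and that the present proof amounts to adapting that result to the normalized flow \eqref{eq:nLCF} with the correct choice of $\kappa$.
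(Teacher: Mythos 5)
Your overall strategy---reduce \eqref{eq:nLCF} on the ansatz \eqref{eq:3Sak.G2str.psi} to a three-dimensional ODE system and transfer the convergence analysis of \cite{KennonLotay2023}---is the paper's strategy, and the reduction step you flag as the ``real content'' is in fact routine: \eqref{eq:3Sak.Delta} shows that $\Delta\psi_\pm$ stays in the span of $\vol_N$, $\eta_2\wedge\eta_3\wedge\omega_1+\eta_3\wedge\eta_1\wedge\omega_2$ and $\eta_1\wedge\eta_2\wedge\omega_3$, so the family is preserved and one gets the explicit system \eqref{eq:3Sak.nLCF.1}. Where your plan is vague is exactly at the transfer step, and that is where the one genuine idea lives. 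The paper does not conjugate flows and does not build a Lyapunov function: it introduces the scale-invariant quantities $X=a^2/c^2$, $Y=ab/c^2$ and the reparametrized time $ds/dt=1/c^2$, and observes that the induced system \eqref{eq:3Sak.Xflow} for $(X,Y)$ is \emph{identical} to the system for the unnormalized co-flow in \cite{KennonLotay2023}: the term $-\kappa^2\psi$ is invisible to scale-invariant variables. Convergence of $(X,Y)$ then comes directly from their Theorem 3.8. The normalization reappears only in the scale equation \eqref{eq:3Sak.cf.c}, $\tfrac{d}{dt}(c^2)=4F(X,Y)-\tfrac{\kappa^2}{2}c^2$, where it acts as damping: boundedness of $(X,Y)$ gives boundedness of $c$, hence $s\to\infty$ as $t\to\infty$, and the asymptotically autonomous linear ODE forces $c^2$ to converge to the scale of $\psi_\pm^\kappa$ from Lemma \ref{lem:3Sak.nG2}.

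Your alternative ``conjugation'' route is not wrong in principle (the normalized and unnormalized co-flows are related by rescaling space and time, since $\Delta_{\lambda^3\varphi}(\lambda^4\psi)=\lambda^2\Delta_\varphi\psi$), but as stated it has a gap: what you would import from \cite{KennonLotay2023} is convergence of the unnormalized flow \emph{after rescaling}, i.e.\ convergence of the shape $(X,Y)$. Under the conjugation, the normalized scale is essentially $c^2(t)/t$ up to a constant, so to conclude that the normalized solution genuinely converges---and converges to $\psi_\pm^\kappa$ rather than merely converging modulo scale---you need the precise linear growth rate of $c^2$ along the unnormalized flow, with an identified slope. That is strictly finer information than the modulo-rescaling convergence statement, and you do not indicate how to obtain it. The paper's damped-ODE argument for $c^2$ sidesteps this issue entirely, which is why the theorem is ``almost immediate'' from \cite{KennonLotay2023} in the paper but not quite immediate along the path you propose.
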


\noindent In particular, for the 7-sphere this says that both the n$G_2$-structures giving the round metric and second ``squashed'' Einstein metric are stable for the normalized $G_2$-Laplacian co-flow, which is markedly different from Ricci flow.

In contrast, we have the following result for \eqref{eq:nMLCF}.

\begin{theorem}\label{thm:3Sak.instab}
 For any $\kappa>0$, the n$G_2$-structures $\psi_{\pm}^{\kappa}$ are unstable for \eqref{eq:nMLCF}.  Moreover, they have index $1$ in the $3$-parameter family $\psi_{\pm}$.   
\end{theorem}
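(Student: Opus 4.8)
The plan is to analyze the flow \eqref{eq:nMLCF} linearized at each n$G_2$-structure $\psi_\pm^\kappa$, restricting attention to the explicit $3$-parameter family $\psi_\pm$ of co-closed $G_2$-structures coming from the $3$-Sasakian geometry. First I would recall from \cite{KennonLotay2023} (and the setup in \S\ref{sec:3Sak}) the precise parametrization of the family $\psi_\pm$: a $3$-Sasakian $7$-manifold carries a canonical splitting of its tangent bundle into a rank-$3$ ``vertical'' distribution spanned by the three Reeb vector fields and a rank-$4$ ``horizontal'' distribution, and the natural co-closed $G_2$-structures are built by independently scaling the vertical and horizontal pieces. This gives coordinates, say $(a,b,c)$ or equivalently squashing parameters, on the family, in which the n$G_2$-locus and the torsion components $\tau_0,\tau_3$ are explicit algebraic functions. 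The key point is that within this finite-dimensional family the flow \eqref{eq:nMLCF} reduces to an ODE system, and linear stability at $\psi_\pm^\kappa$ is governed by the eigenvalues of the Hessian of the associated reduced functional (or equivalently the linearization of the reduced velocity field) at the critical point.

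The heart of the argument is to compute this reduced linearization explicitly and exhibit an unstable direction. Concretely, I would write the right-hand side of \eqref{eq:nMLCF} as a vector field $V(a,b,c)$ on the parameter space (after projecting $\Delta_\varphi\psi$, $\tfrac12 d((5\gamma\kappa-7\tau_0)\varphi)$ and the rescaling term back onto the tangent space of the family, which is finite-dimensional and invariant enough for this purpose), verify that the n$G_2$-point is a zero of $V$, and then diagonalize $DV$ there. Instability amounts to showing that $DV$ has at least one eigenvalue with positive real part; the claim of \emph{index exactly $1$} means precisely one unstable eigendirection. A clean way to organize this is to use the criterion of Theorem~\ref{thm:lin.stab}: the dangerous eigenvalues of $d*$ on exact $\Omega^4_{27}$-forms lying in the interval \eqref{eq:eigenvalue.bounds} produce instability, and the relevant exact $4$-forms tangent to the family $\psi_\pm$ are generated by the derivative of $\psi_\pm$ along the squashing direction. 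I would identify this tangent $4$-form, compute $d*$ applied to it (which by homogeneity and the algebra of the $3$-Sasakian structure should be a concrete multiple $\kappa\mu$ of the form itself plus controlled pieces), and check that the resulting $\mu$ falls in the forbidden range $-1>\mu>-\tfrac52(\gamma-1)$ for every $\gamma>2$. The fact that this holds uniformly in $\gamma$ and $\kappa$ is what makes the instability robust across the whole normalized family of flows.

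To pin down that the index is exactly $1$ rather than merely at least $1$, I would show that the remaining directions in the $3$-parameter family are either stable or neutral. By the $\Sp(1)$-symmetry (or $\SO(3)$-symmetry) of the $3$-Sasakian structure, the three vertical scaling parameters are permuted, so the tangent space to the family decomposes under this symmetry into a $1$-dimensional ``trace'' (overall rescaling) part and a complementary part. The overall rescaling direction is stable by the choice $\gamma>2$, as noted after \eqref{eq:nMLCF} and proved in \S\ref{sec:MLCF}; the genuinely new instability is the single squashing mode that deforms the relative size of the vertical and horizontal distributions. Establishing that no further unstable eigenvalues appear reduces to checking the sign of the remaining eigenvalues of the $3\times3$ (or, after symmetry reduction, essentially $2\times2$) linearized matrix, which is a finite computation.

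The main obstacle will be the explicit computation of $d*$ (equivalently of $\Delta_\varphi\psi$ and the torsion $\tau_0$) along the squashing deformation of $\psi_\pm$, because this requires carefully tracking how the Hodge star, the $G_2$-decomposition $\Omega^4=\Omega^4_1\oplus\Omega^4_7\oplus\Omega^4_{27}$, and the exterior derivative all vary as the vertical and horizontal scales change; keeping the forms in the correct representation-theoretic type so that Theorem~\ref{thm:lin.stab} applies is delicate. I expect the cleanest route is to exploit the homogeneity and the structure equations of the $3$-Sasakian geometry to reduce every such computation to linear algebra on the fixed vertical/horizontal splitting, so that the eigenvalue $\kappa\mu$ emerges as an explicit rational number independent of the particular $3$-Sasakian manifold, thereby covering all examples — including both Einstein metrics on $S^7$ — simultaneously.
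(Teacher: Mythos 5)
Your proposal follows essentially the same route as the paper's proof (Theorem \ref{thm:3Sak.instab.2}): reduce \eqref{eq:nMLCF} on the family \eqref{eq:3Sak.G2str.psi} to the explicit ODE system \eqref{eq:3Sak.nMLCF.0}, linearize at $\psi_{\pm}^{\kappa}$, and read off the index from the eigenvalues of the resulting $3\times 3$ matrices. The one organizational difference is ordering: the paper computes the matrices first, finds a $2$-dimensional stable and $1$-dimensional unstable eigenspace in each case, and only afterwards verifies that the unstable directions $\Psi_{\pm}$ are exact $d*$-eigenforms in $\Omega^4_{27}$ with eigenvalues $-\tfrac{5}{3}\kappa$ and $-\tfrac{3}{2}\kappa$ in the forbidden interval of Theorem~\ref{thm:lin.stab}; you propose to run the eigenvalue criterion first. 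That reordering is legitimate --- on $\Omega^4_{27,\mathrm{exact}}$ the DeTurck field $V(\dot\psi)$ vanishes and the linearization is a polynomial in the self-adjoint operator $d*$, so a forbidden eigenvalue yields an honest unstable eigenvector of the ODE linearization, not merely a direction of positive quadratic form --- but it only buys instability; for index exactly $1$ you still need the full matrix computation, exactly as in the paper. Two inaccuracies in your write-up should be fixed, though neither is fatal since you fall back on that direct computation. First, the $\Sp(1)$/$\SO(3)$ symmetry reduction you invoke is not available: the family \eqref{eq:3Sak.G2str.psi} itself breaks this symmetry, its parameters being $a$ (scaling $\eta_1,\eta_2$), $b$ (scaling $\eta_3$) and $c$ (horizontal), not three permuted vertical scales; the reduction to an effectively $2\times 2$ problem does hold, but because $(1,1,1)$ is the rescaling eigendirection (stable for $\gamma>2$ by Lemma \ref{lem:scaling}), not because of any symmetry. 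Second, your identification of the unstable mode as vertical-versus-horizontal squashing is wrong for $\psi_{+}^{\kappa}$, where the paper finds the unstable direction $(A,B,C)=(-1,2,0)$, i.e.\ a relative squashing of the two vertical scales $a,b$ with $c$ fixed (only for $\psi_{-}^{\kappa}$, with unstable direction $(0,-4,1)$, does the horizontal scale participate).
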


\noindent We further identify the maximally destabilizing direction within $\psi_{\pm}$ for $\psi_{\pm}^{\kappa}$ and see that it is exact, it lies in $\Omega^4_{27}$ and satisfies the eigenvalue condition in \eqref{eq:eigenvalue.bounds} as we would expect.  These results are found by direct analysis of a nonlinear ODE system.

\begin{remark}
For the n$G_2$-structure inducing the ``squashed'' Einstein metric with dual 4-form $\psi_{+}^{\kappa}$, \cite{Semmelmann2024}*{Proposition 5.11} shows that one can recover a subspace of destabilising directions in Theorem \ref{thm:lin.stab} directly, via certain eigenfunctions of the scalar Laplacian, acting on a canonically-defined Riemannian 4-orbifold equipped with an anti-self-dual Einstein metric.
\end{remark}

\begin{remark}
Our results give some evidence for the fact that all n$G_2$-structures will be unstable for the modified $G_2$-Laplacian co-flow given in \eqref{eq:nMLCF}, which is potentially worthy of future study.  It certainly indicates that the modified $G_2$-Laplacian co-flow has some rather undesirable features when it comes to n$G_2$-structures.
\end{remark}

\noindent{\bf Acknowledgements:} JDL was partially supported by the National Science Foundation under Grant No.~DMS-1928930 and the Simons Foundation as a Simons Visiting Professor at SLMath, Berkeley, California, during the Fall Semester 2024 when this project was initiated. JRS was supported by the São Paulo Research Foundation (Fapesp) 2024/13414-2 linked to 2021/04065-6 BRIDGES.
\section{Preliminaries} \label{sec:preliminaries}
\subsection{\texorpdfstring{$G_2$}{G2}-structures}
\label{sec:G2structures}
Let us begin with some preliminaries on $G_2$-structures (cf.~ \cites{Bryant2006,Dwivedi2023,Fernandez1982,Karigiannis2020}). 

\begin{definition}
    A 3-form $\varphi$ on a 7-manifold $M$ is a \emph{$G_2$-structure} if the following formula defines a Riemannian metric $g$ and a volume form $\vol$ on $M$\footnote{Note that the orientation convention in \eqref{eq:G2.structure} agrees with  \cite{Bryant2006}, but is opposite to the one in \cites{Alexandrov2012, Nagy2021}.}:
\begin{equation}\label{eq:G2.structure}
   g(X,Y) \vol= \tfrac{1}{6} (X \lrcorner\varphi )\wedge (Y \lrcorner \varphi )\wedge \varphi,    
\end{equation}
where $X,Y$ are tangent vectors on $M$.  These data also determine a 4-form $\psi := *\varphi$ with $\varphi \wedge \psi = 7 \vol$, so $|\varphi|^2=|\psi|^2=7$.  Note that the pointwise stabilizer of $\varphi$ in $\GL(7,\R)$ is isomorphic to the exceptional Lie group $G_2$. 
\end{definition} 

\begin{remark}
The requirement for $\varphi$ to define a $G_2$-structure is an open condition and \eqref{eq:G2.structure} can be viewed as a ``positivity'' condition.  The existence of a $G_2$-structure on $M^7$ is equivalent to $M$ admitting a spin structure (and thus an orientation).
\end{remark}

We shall be primarily interested in the following classes of $G_2$-structures.

\begin{definition}
A $G_2$-structure $\varphi$ on $M$ is \emph{co-closed} if its Hodge dual $\psi$ is closed, i.e.~$d\psi=0$.  

A \emph{nearly $G_2$-structure}\footnote{Nearly $G_2$-structures are also called nearly parallel $G_2$-structures in the literature.} (n$G_2$) $\varphi$ satisfies $d\varphi=\tau_0 \psi$ for $\tau_0\in\R$ with $\tau_0\neq 0$.  Note that an n$G_2$-structure is co-closed.  We say $(M,\varphi)$ is a nearly $G_2$-manifold (n$G_2$) if $\varphi$ is an n$G_2$-structure.  A nearly $G_2$-structure induces a Riemannian metric $g$ as in \eqref{eq:G2.structure} which is Einstein, in particular $\Ric = \tfrac{3}{8} \tau_0^2 g$, which implies $M$ is compact if $g$ is complete.  We shall therefore assume from now on that $M$ is compact.
\end{definition}

\begin{remark}
If one demands $d\varphi=\tau_0\psi$ for a function $\tau_0$, then it must necessarily be locally constant cf.~\cite{Dwivedi2023}*{Remark 2.3}, and can be fixed up to constant rescalings of the resulting metric if $M$ is connected. In the conventions used in this paper, the round metric on the unit sphere in $\R^8$ has constant sectional curvature $1$, scalar curvature $42$, and is induced by an n$G_2$-structure with $\tau_0=4$.   
\end{remark}

\subsection{Differential forms and torsion} Given a $G_2$-structure, we have the following orthogonal splittings of the  2-forms and 3-forms, corresponding to irreducible $G_2$-representations, which will play a central role in our study.

\begin{definition} Let $\varphi$ be a $G_2$-structure on $M$.  We then have orthogonal decompositions
\begin{equation} \label{eq:g2forms}
\Omega^2 =   \Omega^2_7 \oplus  \Omega^2_{14}\quad\text{and}\quad \Omega^3 =   \Omega^3_1 \oplus  \Omega^3_7 \oplus \Omega^3_{27}, 
\end{equation}
where the subscripts indicate the ranks of the vector bundles of which the forms in question are sections.  Using the Hodge star, we also obtain splittings of $\Omega^4$ and $\Omega^5$:
\begin{equation} \label{eq:g2forms.star}
\Omega^4 =   \Omega^4_1 \oplus  \Omega^4_7 \oplus \Omega^4_{27}\quad\text{and}\quad \Omega^5 =   \Omega^5_7 \oplus  \Omega^5_{14}. 
\end{equation}
We shall let $\pi_j:\Omega^k\to\Omega^k_j$ denote the orthogonal projections.
\end{definition}

It will be useful for us to have explicit descriptions for the summands in \eqref{eq:g2forms}, and hence \eqref{eq:g2forms.star}, which we now recall.

\begin{lemma}\label{lem:forms.decomp} Let $\varphi$ be a $G_2$-structure on $M$ with Hodge dual $\psi$.  We may characterise the summands in \eqref{eq:g2forms} as:
\begin{gather} \label{eq:g2twoforms.7}
 \Omega^2_7 = \lbrace \alpha \in \Omega^2 \mid *(\alpha\wedge \varphi) = 2 \alpha \rbrace =\lbrace * (X \wedge \psi) \mid X \in \Omega^1 \rbrace;\\
\Omega^2_{14} = \lbrace \alpha \in \Omega^2 \mid *(\alpha\wedge \varphi) = -\alpha \rbrace = \lbrace \alpha \in \Omega^2 \mid \alpha\wedge \psi =0 \rbrace;\label{eq:g2twoforms.14}   \\
 \Omega^3_1 = \lbrace f \varphi \mid f \in \Omega^0 \rbrace;\qquad\qquad \Omega^3_{7} = \lbrace * (X \wedge \varphi) \mid X \in \Omega^1 \rbrace;\\
 \Omega^3_{27} = \lbrace \alpha \in \Omega^3 \mid \alpha\wedge \psi =0,\, \alpha \wedge \varphi=0 \rbrace.    
\end{gather}
\end{lemma}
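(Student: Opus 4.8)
The plan is to prove everything pointwise and representation-theoretically, exploiting that the stabilizer of $\varphi$ is $G_2$ and that all the tensors and operators involved ($\varphi$, $\psi=*\varphi$, the Hodge star, and wedge product) are $G_2$-invariant. Thus the decompositions in \eqref{eq:g2forms} are precisely the isotypic decompositions of $\Lambda^k(\R^7)^*$ into the irreducible $G_2$-representations of dimensions $1,7,14,27$, which are pairwise non-isomorphic since their dimensions are distinct. The whole argument then reduces to: (i) checking that each map I write down is $G_2$-equivariant, (ii) invoking Schur's lemma to conclude that it acts as a scalar on each irreducible summand and vanishes between non-isomorphic summands, and (iii) pinning down the surviving scalars by a single explicit evaluation in the standard model $(\R^7,\varphi_0)$.

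For the 2-forms I would consider the self-adjoint operator $L_\varphi\colon \alpha\mapsto *(\alpha\wedge\varphi)$ on $\Omega^2$, which is $G_2$-equivariant, so by Schur it is a scalar $\lambda_7$ on $\Omega^2_7$ and $\lambda_{14}$ on $\Omega^2_{14}$; a direct computation on one representative two-form in each summand gives $\lambda_7=2$ and $\lambda_{14}=-1$, yielding the eigenvalue characterisations. Next, $X\mapsto *(X\wedge\psi)$ is an equivariant map $\Omega^1\to\Omega^2$; since $\Omega^1$ is the irreducible $7$-dimensional representation and the map is nonzero (checked on a single covector), its image is an irreducible copy of that representation and hence must be the $\Omega^2_7$ summand, giving the second description of $\Omega^2_7$. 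Finally, $\alpha\mapsto\alpha\wedge\psi$ is an equivariant map $\Omega^2\to\Omega^6\cong\Omega^1$; by Schur it vanishes on $\Omega^2_{14}$ and is an isomorphism on $\Omega^2_7$, so its kernel is exactly $\Omega^2_{14}$, establishing the wedge description of $\Omega^2_{14}$.

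The 3-forms are handled in the same spirit. The summand $\Omega^3_1$ is the trivial subrepresentation, which is one-dimensional and contains the invariant form $\varphi$, so $\Omega^3_1=\{f\varphi\}$. The equivariant map $X\mapsto *(X\wedge\varphi)\colon\Omega^1\to\Omega^3$ is nonzero, so by irreducibility of $\Omega^1$ its image is the unique $\Omega^3_7$ summand. For $\Omega^3_{27}$ I would use two equivariant ``detector'' maps: $\alpha\mapsto\alpha\wedge\psi\in\Omega^7\cong\Omega^0$ and $\alpha\mapsto\alpha\wedge\varphi\in\Omega^6\cong\Omega^1$. Using $\varphi\wedge\psi=7\vol\neq 0$ and $\varphi\wedge\varphi=0$ (automatic since $\varphi$ has odd degree), Schur shows the first map has kernel $\Omega^3_7\oplus\Omega^3_{27}$ and the second has kernel $\Omega^3_1\oplus\Omega^3_{27}$; intersecting these gives $\Omega^3_{27}=\{\alpha:\alpha\wedge\psi=0,\ \alpha\wedge\varphi=0\}$.

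The conceptual content is entirely carried by Schur's lemma together with the non-isomorphism of the summands; the only genuine computations are the eigenvalues $2$ and $-1$ of $L_\varphi$ and the verification that each equivariant map is nonzero on a single well-chosen element. This is where I expect the only real friction: one must either fix an explicit $\varphi_0$ on $\R^7$ and compute in coordinates, or quote the standard normalisations. None of this is deep, but it is the step that cannot be bypassed by abstract nonsense, since Schur's lemma alone does not distinguish a nonzero equivariant map from the zero map, nor fix the value of the scalar by which an operator acts on an irreducible summand.
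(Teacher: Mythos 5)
Your proposal is correct, but there is nothing in the paper to compare it against: Lemma \ref{lem:forms.decomp} sits in the Preliminaries and is \emph{recalled} as standard material (following the cited literature on $G_2$-structures), with no proof given. What you have written is essentially the canonical proof of these standard facts, and it is sound. The two pillars — (a) the summands in \eqref{eq:g2forms} are by definition the isotypic pieces for the pointwise $G_2$-action, each irreducible type ($1$, $7$, $14$, $27$) occurring with multiplicity one, and (b) every map you use ($\alpha\mapsto *(\alpha\wedge\varphi)$, $X\mapsto *(X\wedge\psi)$, $X\mapsto *(X\wedge\varphi)$, wedging with $\varphi$ or $\psi$) is $G_2$-equivariant — do reduce everything to Schur's lemma plus a handful of evaluations in the flat model. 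Two small points deserve the care you partly flag: first, Schur over $\R$ only gives scalar action on an irreducible summand because these particular representations are of real type (alternatively, as you note, self-adjointness of $\alpha\mapsto *(\alpha\wedge\varphi)$ forces real eigenvalues and invariant eigenspaces, which suffices); second, the kernel computations for the ``detector'' maps are only \emph{exact} once you verify nonvanishing on the complementary summands — e.g.\ that $\alpha\wedge\psi\neq 0$ for $0\neq\alpha\in\Omega^2_7$ and that $*(X\wedge\varphi)\wedge\varphi\neq 0$ for $X\neq 0$ — since Schur alone cannot rule out that an equivariant map between isomorphic irreducibles is zero. You identify exactly this as the irreducible computational content, and those identities are standard (each reduces to a one-line calculation with an explicit $\varphi_0$ on $\R^7$), so the argument goes through.
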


 An immediate consequence of the decompositions \eqref{eq:g2forms} is that we can describe the \emph{torsion} of a $G_2$-structure as follows.

 \begin{definition}
    The \emph{torsion forms} of a $G_2$-structure $\varphi$ with Hodge dual $\psi$ are given by
    $\tau_0 \in \Omega^0$, $\tau_1 \in \Omega^1$,  $\tau_{2} \in \Omega^2_{14}$ and $\tau_{3} \in \Omega^3_{27}$ such that: 
\begin{align} \label{eq:g2torsion}
  d\varphi =\tau_0 * \varphi + 3 \tau_1 \wedge \varphi + *\tau_{3}& &d \psi = 4 \tau_1 \wedge * \varphi + * \tau_{2}
\end{align}
In particular, co-closed $G_2$-structures are those for which $\tau_1=\tau_2=0$ and n$G_2$-structures are those for which additionally $\tau_0$ is a non-zero constant. 
 \end{definition}

 \begin{remark}
A $G_2$-structure $\varphi$ is \emph{torsion-free} if all the torsion forms in \eqref{eq:g2torsion} vanish, i.e.~$d\varphi=d\psi=0$.  In this case the metric induced by $\varphi$ is Ricci-flat: in fact, it has holonomy contained in $G_2$.
 \end{remark}
To help compute the torsion forms, we recall the following basic lemma. 
\begin{lemma} \label{lem:g2torsion} The torsion form $\tau_0$ in \eqref{eq:g2torsion} is given by:
\begin{align*}
\tau_0 = \tfrac{1}{7} * ( d \varphi \wedge \varphi).    
\end{align*}
\end{lemma}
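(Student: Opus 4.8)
The plan is to compute the $7$-form $d\varphi\wedge\varphi$ directly by substituting the full torsion decomposition \eqref{eq:g2torsion} and verifying that only the $\tau_0$-contribution survives. Wedging \eqref{eq:g2torsion} with $\varphi$ gives
\begin{equation*}
d\varphi\wedge\varphi = \tau_0\,(*\varphi)\wedge\varphi + 3\,\tau_1\wedge\varphi\wedge\varphi + (*\tau_3)\wedge\varphi,
\end{equation*}
so it suffices to evaluate the three summands separately.

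For the first term I would use $*\varphi=\psi$ together with the normalization $\varphi\wedge\psi=7\vol$ from \eqref{eq:G2.structure}, so that $(*\varphi)\wedge\varphi=\psi\wedge\varphi=7\vol$, contributing $7\tau_0\vol$. For the second term, the key observation is that $\varphi$ is a $3$-form, hence of odd degree, so graded-commutativity forces $\varphi\wedge\varphi=-\varphi\wedge\varphi=0$; thus $\tau_1\wedge\varphi\wedge\varphi=0$ irrespective of $\tau_1$. For the third term, the cleanest route is the standard pointwise identity $\alpha\wedge*\beta=\langle\alpha,\beta\rangle\,\vol$ for $k$-forms $\alpha,\beta$: applied with $\alpha=\varphi$ and $\beta=\tau_3$ it yields $(*\tau_3)\wedge\varphi=\varphi\wedge*\tau_3=\langle\varphi,\tau_3\rangle\,\vol$, which vanishes because $\varphi\in\Omega^3_1$ and $\tau_3\in\Omega^3_{27}$ lie in orthogonal summands of the decomposition \eqref{eq:g2forms}. (Alternatively one may invoke directly the defining property $\tau_3\wedge\varphi=0$ of $\Omega^3_{27}$ from Lemma \ref{lem:forms.decomp} after rewriting $*\tau_3\wedge\varphi$.)

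Combining the three evaluations gives $d\varphi\wedge\varphi=7\tau_0\,\vol$, and applying the Hodge star together with $*\vol=1$ (valid in dimension $7$, since $**=\id$ on functions) produces $*(d\varphi\wedge\varphi)=7\tau_0$, which rearranges to the claimed formula. The computation is essentially routine: there is no genuine obstacle, and the only points requiring a moment of care are the two vanishing claims, both of which reduce to elementary degree-parity and orthogonality considerations rather than to any substantial input.
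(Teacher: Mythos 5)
Your proof is correct, and it is the standard argument: the paper itself states this lemma without proof (recalling it as a basic fact), and your computation—wedging \eqref{eq:g2torsion} with $\varphi$, killing the $\tau_1$-term via $\varphi\wedge\varphi=0$ and the $\tau_3$-term via orthogonality of the type decomposition (equivalently $\tau_3\wedge\psi=0$), then using $\varphi\wedge\psi=7\vol$—is precisely the derivation one would supply.
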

We shall also need the following elementary result.
\begin{lemma} \label{lem:dtau3} For a $G_2$-structure $\varphi$ with Hodge dual $\psi$, we have that the torsion form $\tau_3$ in \eqref{eq:g2torsion} satisfies $\pi_1 ( d \tau_3) = \tfrac{1}{7} |\tau_3|^2 \psi$. 
\end{lemma}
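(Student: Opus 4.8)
The plan is to compute the $\Omega^4_1$-component of $d\tau_3$ directly by pairing with $\psi$. Since $\Omega^4_1=*\,\Omega^3_1=\{f\psi:f\in\Omega^0\}$ is the pointwise span of $\psi$, and $|\psi|^2=7$, orthogonal projection onto this line is $\pi_1(d\tau_3)=\tfrac{1}{7}\langle d\tau_3,\psi\rangle\,\psi$. Thus it suffices to establish the pointwise scalar identity $\langle d\tau_3,\psi\rangle=|\tau_3|^2$, and everything reduces to evaluating this single inner product.

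First I would convert the inner product into a wedge product. In dimension $7$ we have $**=(-1)^{4\cdot 3}=\id$ on $\Omega^4$, so $*\psi=\varphi$, and therefore $\langle d\tau_3,\psi\rangle\,\vol=d\tau_3\wedge *\psi=d\tau_3\wedge\varphi$. This recasts the problem as showing $d\tau_3\wedge\varphi=|\tau_3|^2\,\vol$.

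The key structural input is that $\tau_3\in\Omega^3_{27}$, which by Lemma \ref{lem:forms.decomp} is characterised by the two orthogonality relations $\tau_3\wedge\varphi=0$ and $\tau_3\wedge\psi=0$. Applying the Leibniz rule to the first of these, $0=d(\tau_3\wedge\varphi)=d\tau_3\wedge\varphi-\tau_3\wedge d\varphi$ (the sign coming from $\deg\tau_3=3$), I obtain $d\tau_3\wedge\varphi=\tau_3\wedge d\varphi$. This trades the derivative of $\tau_3$ for the derivative of $\varphi$, which is governed entirely by the torsion.

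Finally I would substitute the torsion expansion $d\varphi=\tau_0*\varphi+3\tau_1\wedge\varphi+*\tau_3$ from \eqref{eq:g2torsion} and wedge with $\tau_3$. The term $\tau_0\,\tau_3\wedge *\varphi=\tau_0\,\tau_3\wedge\psi$ vanishes by the second relation, and the middle term $3\tau_3\wedge\tau_1\wedge\varphi$ vanishes because it contains the factor $\tau_3\wedge\varphi=0$; only $\tau_3\wedge *\tau_3=|\tau_3|^2\,\vol$ survives. Hence $d\tau_3\wedge\varphi=|\tau_3|^2\,\vol$, giving $\langle d\tau_3,\psi\rangle=|\tau_3|^2$ and the claim. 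I expect no genuine obstacle here: the argument is a short bookkeeping computation, and the only points requiring care are invoking the two orthogonality relations defining $\Omega^3_{27}$ at the correct moments and tracking the Leibniz sign for the degree-three form $\tau_3$.
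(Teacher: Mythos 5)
Your proof is correct and follows essentially the same route as the paper's: both reduce the claim to the identity $d\tau_3\wedge\varphi=\tau_3\wedge d\varphi$ via Leibniz and $\tau_3\wedge\varphi=0$, then kill all but the $*\tau_3$ term of $d\varphi$ using the two relations defining $\Omega^3_{27}$. The only cosmetic difference is that you extract the $\Omega^4_1$-component via the inner-product projection $\tfrac{1}{7}\langle d\tau_3,\psi\rangle\psi$, whereas the paper writes $\pi_1(d\tau_3)=\lambda\psi$ and wedges with $\varphi$ directly.
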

\begin{proof} Let $\pi_1 ( d \tau_3) = \lambda \psi$ for some $\lambda$ to be determined. Now, we compute 
\begin{equation*}
7 \lambda \vol =  \pi_1 ( d \tau_3) \wedge \varphi = d \tau_3 \wedge \varphi = \tau_3 \wedge d \varphi = \tau_3 \wedge * \tau_3 = |\tau_3|^2 \vol\qedhere   
\end{equation*} 
\end{proof}

\section{Analysis on nearly \texorpdfstring{$G_2$}{G2}-manifolds}\label{sec:nG2}

In this section we will have a \emph{compact} n$G_2$-manifold $(M,\varphi)$ with $d\varphi=\tau_0\psi$ and induced positive Einstein metric $g$. We will study certain first-order differential operators on $(M,\varphi)$ and show how these operators can be used to effectively decompose differential forms on $M$.  This will be invaluable for our analysis of $G_2$-Laplacian co-flows.

\subsection{The curl operator}  We begin with a $G_2$-analogue of the curl operator in 3 dimensions.

\begin{definition}\label{dfn:curl} We define the self-adjoint \emph{curl} operator $\curl{}: \Omega^1 \rightarrow \Omega^1$ on the compact n$G_2$ $(M,\varphi)$ by $\curl{X} := * (d X \wedge \psi) $.  We shall let $\sigma(\curl)$ denote the spectrum of $\curl$.
\end{definition}

We note some useful formulae for the curl operator from \cite{Dwivedi2023}*{Corollary 2.8}:
\begin{align} \label{eq:curl}
\curl{d f} = d^* \curl{X} =0\quad\text{and}\quad\curl{\curl{X}} = \Delta X - d d^* X + \tau_0 \curl{X},
\end{align}
where here and throughout $d^*$ is the adjoint of $d$ with respect to the metric and orientation determined by $\varphi$ and $\Delta$ is the Hodge Laplacian defined by $\varphi$. 

A fact we shall use several times is that on any compact Riemannian manifold we have the $L^2$-orthogonal splitting of one-forms: 
\begin{align} \label{eq:splitting1forms}
 \Omega^1 = d (\Omega^0)  \oplus \ker d^*   
\end{align}
It follows from \eqref{eq:curl} that $\curl$ acts orthogonally with respect to this splitting. In the n$G_2$-setting, the induced metric is positive Einstein, thus it admits no non-trivial harmonic one-forms by the classical result of Bochner. We deduce the following.

\begin{lemma} Given the splitting \eqref{eq:splitting1forms} on a compact n$G_2$, we have that
$\ker \curl = d (\Omega^0)$ and $\curl$ is invertible on $\ker d^*$.
\end{lemma}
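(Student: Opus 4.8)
The plan is to exploit the two identities recorded in \eqref{eq:curl}, namely $\curl(df)=0$ and $d^*\curl X=0$, together with the Bochner vanishing noted just above the lemma. First I would observe that $\curl(df)=0$ immediately gives the inclusion $d(\Omega^0)\subseteq\ker\curl$, while $d^*\curl X=0$ for every $X$ shows $\im\curl\subseteq\ker d^*$. Combined with the orthogonal splitting \eqref{eq:splitting1forms} (with respect to which $\curl$ is already noted to act orthogonally), this reduces everything to the behaviour of $\curl$ on the closed, $\curl$-invariant subspace $\ker d^*$: the operator kills the exact part and maps $\ker d^*$ into itself.

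The heart of the argument is to prove that $\curl$ is \emph{injective} on $\ker d^*$. I would take $X\in\ker d^*$ with $\curl X=0$ and feed this into the third identity of \eqref{eq:curl}, $\curl(\curl X)=\Delta X-dd^*X+\tau_0\curl X$. Since $\curl X=0$ and $d^*X=0$, this collapses to $\Delta X=0$, so on the compact manifold $X$ is a harmonic $1$-form. As the induced metric is positive Einstein, the Bochner vanishing already invoked in the text forces $X=0$. Feeding this injectivity back through the splitting upgrades the trivial inclusion to the equality $\ker\curl=d(\Omega^0)$: any $X\in\ker\curl$ decomposes as $X=df+Y$ with $Y\in\ker d^*$, and then $\curl X=\curl Y=0$ forces $Y=0$, so $X=df$.

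Finally, for genuine invertibility on $\ker d^*$, rather than mere triviality of the kernel, I would argue spectrally. On $\ker d^*$ the second identity simplifies to $\curl^2=\Delta+\tau_0\curl$, whose leading term is the elliptic Hodge Laplacian; consequently the self-adjoint first-order operator $\curl|_{\ker d^*}$ has discrete spectrum with no finite accumulation point. The triviality of the kernel established above then says that $0$ is not an eigenvalue, so $\curl|_{\ker d^*}$ is invertible with bounded inverse.

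The step I expect to be the main obstacle is this last one: upgrading injectivity to invertibility. The vanishing of the kernel is essentially formal once \eqref{eq:curl} and Bochner are in hand, but to conclude invertibility one must know that the restriction of $\curl$ to the co-closed forms is a well-behaved operator with discrete spectrum. This is exactly where one uses that $\curl^2=\Delta+\tau_0\curl$ has elliptic principal part on $\ker d^*$, so that the standard spectral theory for the self-adjoint operator $\curl$ applies.
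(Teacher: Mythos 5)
Your proof is correct and follows essentially the same route as the paper, which deduces the lemma directly from the identities in \eqref{eq:curl} (orthogonality of $\curl$ with respect to the splitting \eqref{eq:splitting1forms}) together with the Bochner vanishing of harmonic $1$-forms on a positive Einstein manifold. Your final spectral step — using that $\curl^2=\Delta+\tau_0\curl$ on $\ker d^*$ to get discrete spectrum, so that triviality of the kernel upgrades to invertibility — is exactly the standard argument the paper leaves implicit in the phrase ``we deduce the following.''
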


To continue our analysis of the curl operator we shall need some formulae for the operator $d^*$. 
\begin{lemma}\cite{Dwivedi2023}*{Lemmma 2.11} \label{lemma:formulae} Let $(M, \varphi)$ be compact n$G_2$ with $d\varphi=\tau_0\psi$ and metric $g$. Then for any $f\in\Omega^0$ and $X\in \Omega^1$ we have: 
\begin{itemize}
\item $d^* (f \varphi) = - * (df \wedge \psi)$;
\item $d^* (f \psi) =  * (df \wedge \varphi) + \tau_0 f \varphi$;
    \item $d^* (X \wedge \varphi) = \tfrac{4}{7} (d^* X) \varphi + \tfrac{1}{4} * (( 2 \curl{X} + \tau_0 X) \wedge \varphi) + \tfrac{1}{2} i_\varphi ( (\cL_X g)_0 )$; 
    \item $d^* (X \wedge \psi) = \tfrac{3}{7} (d^* X) \psi + \tfrac{1}{4} ( 2 \curl{X} - 3 \tau_0 X) \wedge \varphi -\tfrac{1}{2} * i_\varphi ( (\cL_X g)_0 )$, 
\end{itemize} 
where $i_\varphi: \Sym^2_0 \rightarrow \Omega^3_{27}$ is the isomorphism defined in \cite{Dwivedi2023}*{Eq.2.17}, and $(\cL_X g)_0$ is the trace-free part of $\cL_X g$. 
\end{lemma}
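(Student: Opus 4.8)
The plan is to reduce every identity to a Hodge-star computation via $d^* = (-1)^k{*}d{*}$ on $k$-forms in dimension $7$ (so $d^* = -{*}d{*}$ on $3$- and $5$-forms and $d^* = {*}d{*}$ on $4$-forms), and then to feed in the n$G_2$ structure equations $d\varphi = \tau_0\psi$, $d\psi = 0$ together with the algebraic contraction identities
\[
{*}(X\wedge\varphi) = -\,X\lrcorner\psi, \qquad {*}(X\wedge\psi) = X\lrcorner\varphi, \qquad {*}(X\lrcorner\psi) = -\,X\wedge\varphi,
\]
which hold pointwise in the conventions of \eqref{eq:G2.structure}. The two scalar formulae are then immediate: since ${*}(f\varphi) = f\psi$ and $d\psi = 0$ we get $d^*(f\varphi) = -{*}d(f\psi) = -{*}(df\wedge\psi)$; and since ${*}(f\psi) = f\varphi$ and $d\varphi = \tau_0\psi$ we get $d^*(f\psi) = {*}d(f\varphi) = {*}(df\wedge\varphi) + \tau_0 f{*}\psi = {*}(df\wedge\varphi) + \tau_0 f\varphi$.

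For the two vector formulae I would pass through Cartan's magic formula. Using the contraction identities and the structure equations,
\[
d^*(X\wedge\varphi) = -{*}\,d(X\lrcorner\psi) = -{*}\cL_X\psi, \qquad d^*(X\wedge\psi) = -{*}\,d(X\lrcorner\varphi) = -{*}\cL_X\varphi - \tau_0\,X\wedge\varphi,
\]
where the second equality on each line uses $\cL_X\psi = d(X\lrcorner\psi)$ (as $d\psi = 0$) and $\cL_X\varphi = d(X\lrcorner\varphi) + \tau_0\,X\lrcorner\psi$ together with ${*}(X\lrcorner\psi) = -X\wedge\varphi$. The explicit torsion term $\tau_0 X\lrcorner\psi$ entering the second Cartan identity, and its absence in the first, is exactly the source of the asymmetry between the combinations $2\curl X + \tau_0 X$ and $2\curl X - 3\tau_0 X$ appearing in the two $\Omega_7$-components.

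It then remains to decompose $\cL_X\psi$ and $\cL_X\varphi$ into their $G_2$-irreducible pieces. Writing the variation of $\varphi$ as $\cL_X\varphi = 3a\,\varphi + {*}(\beta\wedge\varphi) + i_\varphi(s_0)$ with $a\in\Omega^0$, $\beta\in\Omega^1$, $s_0\in\Sym^2_0$, the induced metric variation is $\cL_X g = 2a\,g + 2s_0$ and the dual variation is $\cL_X\psi = 4a\,\psi + \beta\wedge\varphi - {*}i_\varphi(s_0)$. Taking the trace of $\cL_X g$ and using $\tr_g\cL_X g = 2\Div X = -2\,d^*X$ gives $a = -\tfrac17 d^*X$; applying ${*}$ then produces the coefficient $\tfrac47$ in the first vector formula and $\tfrac37$ in the second, the discrepancy reflecting that the $\Omega_1$-weight of $\psi$ is $4a$ while that of $\varphi$ is $3a$. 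The trace-free part gives $s_0 = \tfrac12(\cL_X g)_0$, which after applying ${*}$ yields the terms $\tfrac12 i_\varphi((\cL_X g)_0)$ and $-\tfrac12{*}i_\varphi((\cL_X g)_0)$.

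The one genuinely delicate point, which I expect to be the main obstacle, is the identification of the rotation one-form $\beta$. Since $\beta$ does not affect the metric it cannot be read off from $\cL_X g$, and must instead be extracted by projecting $d(X\lrcorner\varphi) + \tau_0 X\lrcorner\psi$ onto $\Omega^3_7 = \{{*}(Y\wedge\varphi)\}$. Relating the skew part of $\nabla X$ (equivalently $\pi_7(dX)$) to $\curl X = {*}(dX\wedge\psi)$ via \eqref{eq:g2twoforms.7}, and carefully tracking the torsion contribution from $d\varphi = \tau_0\psi$, one finds $\beta$ to be a fixed linear combination of $\curl X$ and $\tau_0 X$; substituting back into the two displays above reproduces precisely $\tfrac14{*}((2\curl X + \tau_0 X)\wedge\varphi)$ and $\tfrac14(2\curl X - 3\tau_0 X)\wedge\varphi$. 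Pinning down the factor $\tfrac14$, the relative signs, and the normalization of $i_\varphi$ from \cite{Dwivedi2023}*{Eq.~2.17} is the bulk of the remaining work, whereas the $\Omega_1$- and $\Omega_{27}$-components follow routinely once the variation formulae are in hand (cf.~\cite{Karigiannis2012}).
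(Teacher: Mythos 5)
The paper offers no internal proof to compare against: Lemma~\ref{lemma:formulae} is quoted directly from \cite{Dwivedi2023}*{Lemma 2.11}, so your attempt has to be judged on its own terms. The skeleton you build is correct. The two scalar identities are complete and correct as you state them. The Cartan-formula reductions $d^*(X\wedge\varphi)=-*\cL_X\psi$ and $d^*(X\wedge\psi)=-*\cL_X\varphi-\tau_0\,X\wedge\varphi$ are valid in the paper's conventions (the contraction identities you invoke hold for any metric and orientation, independently of $G_2$ sign choices). The trace argument giving $a=-\tfrac17 d^*X$, hence the coefficients $\tfrac47$ and $\tfrac37$, is right, as is the treatment of the $\Omega^3_{27}$/$\Omega^4_{27}$ terms. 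Your structural claim about the torsion asymmetry also checks out: writing both right-hand sides in terms of a single rotation form $\beta$, the first stated formula forces $-\beta=\tfrac12\curl X+\tfrac{\tau_0}{4}X$ and the second forces $-\beta-\tau_0 X=\tfrac12\curl X-\tfrac34\tau_0 X$, and these two requirements agree --- a genuine consistency check on your reduction.

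The gap is the step you yourself defer: $\beta$ is never computed. You assert that projecting $d(X\lrcorner\varphi)+\tau_0\,X\lrcorner\psi$ onto $\Omega^3_7$ ``one finds'' a fixed combination of $\curl X$ and $\tau_0 X$, and you leave ``the factor $\tfrac14$, the relative signs, and the normalization of $i_\varphi$'' as remaining work. But those constants \emph{are} the content of the lemma --- everything else is bookkeeping --- and they are precisely what the rest of the paper leans on (the eigenvalues $-\tfrac12\tau_0$, $\tfrac32\tau_0$ in Lemma~\ref{lem:killingvf}, the spectral constraints in Proposition~\ref{prop:Omega417}). As written, your argument establishes only that the formulas have the stated shape with undetermined universal constants, which is strictly weaker than the lemma. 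To close the gap concretely you need two inputs: (i) the identity $\pi_7(dX)=\tfrac13(\curl X)\lrcorner\varphi$, which follows from $\Omega^2_{14}\wedge\psi=0$ together with the contraction identity $(Y\lrcorner\varphi)\wedge\psi=3*Y$ (valid in the paper's orientation convention); and (ii) the n$G_2$ identity $\nabla_Y\varphi=\tfrac{\tau_0}{4}\,Y\lrcorner\psi$ --- or, equivalently in your Cartan setup, the explicit action of an element of $\Omega^2_7$ on $\varphi$ --- so that both the rotation contribution $\pi_7(dX)\cdot\varphi$ and the torsion contribution can be expressed in $\Omega^3_7$ and their coefficients compared. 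With (i) and (ii) in hand your substitution back into the two displays does yield $\tfrac14(2\curl X+\tau_0 X)$ and $\tfrac14(2\curl X-3\tau_0 X)$, but without them the proof is incomplete at its decisive point.
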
 

\subsection{Killing fields} We now wish to discuss Killing fields on $(M,\varphi)$ as 1-forms.

\begin{definition}
 Let $\Omega^1_\mathrm{Killing}$ denote the 1-forms  on the compact n$G_2$ $(M,\varphi)$ which are dual to Killing vector fields.  Now, using  that the induced metric on $M$ is Einstein, we can apply results of Lichnerowicz--Obata to identify the space of Killing vector fields with the Laplacian eigenspace:
\begin{equation}\label{eq:Killing}
\Omega^1_\mathrm{Killing} := \lbrace X\in \Omega^1 \mid d^*X=0, \, \Delta X = \tfrac{3}{4} \tau_0^2 X\rbrace.    
\end{equation}
\end{definition}

 We now show that the n$G_2$-structure gives a further decomposition of $\Omega^1_\mathrm{Killing}$  into eigenspaces for $\curl$, recalling Definition \ref{dfn:curl}.
 
\begin{lemma} \label{lem:killingvf}
    Let $(M,\varphi)$ be compact n$G_2$ with $d\varphi=\tau_0\psi$. If $\tau_0 \lambda \in \sigma(\curl)$ for $\lambda \neq 0$, then $\lambda^2 - \lambda\geq \tfrac{3}{4}$. Moreover, $\Omega^1_\mathrm{Killing}$ admits the $L^2$-orthogonal decomposition:
\begin{align*}
 \Omega^1_\mathrm{Killing} = \lbrace X\in \Omega^1  \mid \cL_X \psi = 0 \rbrace 
  \oplus \lbrace  X\in \Omega^1  \mid \cL_X \psi = -\tau_0 X \wedge \varphi \rbrace 
\end{align*}
into the eigenspaces for $\curl$ with eigenvalues $-\tfrac{1}{2}\tau_0$, $\tfrac{3}{2}\tau_0 \in \sigma(\curl)$ respectively.  
\end{lemma}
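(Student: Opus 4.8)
The plan is to reduce both assertions to the curl--curl identity in \eqref{eq:curl}, which on coclosed $1$-forms turns spectral statements about $\curl$ into spectral statements about the Hodge Laplacian $\Delta$, and then to extract the geometric meaning of the two eigenspaces from the $d^*$-formulae of Lemma \ref{lemma:formulae}.

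For the eigenvalue bound, suppose $\curl X = \tau_0 \lambda X$ with $\lambda \neq 0$. Since $d^* \curl = 0$ by \eqref{eq:curl}, we get $d^* X = 0$, so $X \in \ker d^*$; the $d d^* X$ term in \eqref{eq:curl} then drops and $\curl^2 X = \Delta X + \tau_0 \curl X$, whence $\Delta X = \tau_0^2(\lambda^2 - \lambda) X$. To bound this Laplace eigenvalue I would invoke the Bochner--Weitzenb\"ock formula $\Delta = \nabla^* \nabla + \Ric$ on $1$-forms with $\Ric = \tfrac38 \tau_0^2 g$: pairing with $X$, splitting $\nabla X$ into its symmetric and antisymmetric parts $\tfrac12 \cL_X g$ and $\tfrac12 dX$, and using $\langle \Delta X, X\rangle = \|dX\|^2$ for coclosed $X$, one obtains $\langle \Delta X, X\rangle = \tfrac12 \|\cL_X g\|^2 + \tfrac34 \tau_0^2 \|X\|^2 \ge \tfrac34 \tau_0^2 \|X\|^2$. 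This is precisely the Lichnerowicz-type bound underlying \eqref{eq:Killing} (with equality exactly for Killing fields), and it gives $\tau_0^2(\lambda^2 - \lambda) \ge \tfrac34 \tau_0^2$, i.e.\ $\lambda^2 - \lambda \ge \tfrac34$.

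For the decomposition, note that on $\ker d^*$ the curl--curl identity reads $\Delta = \curl^2 - \tau_0 \curl$, so $\curl$ commutes with $\Delta$ there and hence preserves the $\Delta$-eigenspace $\Omega^1_{\mathrm{Killing}}$ of \eqref{eq:Killing}. On $\Omega^1_{\mathrm{Killing}}$ we have $\Delta = \tfrac34 \tau_0^2$, so $\curl$ satisfies $\curl^2 - \tau_0 \curl - \tfrac34 \tau_0^2 = (\curl - \tfrac32 \tau_0)(\curl + \tfrac12 \tau_0) = 0$. As $\curl$ is self-adjoint and this polynomial has distinct real roots, $\Omega^1_{\mathrm{Killing}}$ splits $L^2$-orthogonally into the eigenspaces $E_{-\tau_0/2}$ and $E_{3\tau_0/2}$. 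To match these with the stated conditions, I would derive a closed formula for $\cL_X \psi$ on Killing fields: since $d\psi = 0$ we have $\cL_X \psi = d(X \lrcorner \psi)$, and with the $G_2$ identity $X \lrcorner \psi = -*(X \wedge \varphi)$, the relation $d * (X\wedge\varphi) = * d^*(X\wedge\varphi)$ and $** = \mathrm{id}$, this reduces to $d^*(X \wedge \varphi)$. For Killing $X$ both $d^* X$ and $(\cL_X g)_0$ vanish, so the corresponding formula in Lemma \ref{lemma:formulae} collapses to $d^*(X \wedge \varphi) = \tfrac14 *\big((2\curl X + \tau_0 X)\wedge\varphi\big)$, yielding $\cL_X \psi = -\tfrac12 \curl X \wedge \varphi - \tfrac14 \tau_0 X \wedge \varphi$. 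Because $Y \mapsto Y \wedge \varphi$ is injective (it is the isomorphism $\Omega^1 \cong \Omega^4_7$, cf.\ Lemma \ref{lem:forms.decomp}), this reads off $\cL_X \psi = 0 \iff \curl X = -\tfrac12 \tau_0 X$ and $\cL_X \psi = -\tau_0 X \wedge \varphi \iff \curl X = \tfrac32 \tau_0 X$, identifying the two summands as claimed.

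The main obstacle is the sign bookkeeping in the $G_2$ contraction identities (the sign in $X\lrcorner\psi = -*(X\wedge\varphi)$ and in $d^* = \pm\, *d*$ on forms of each degree), which must be fixed consistently with the orientation convention of \eqref{eq:G2.structure}; once the formula $\cL_X \psi = -\tfrac12 \curl X \wedge \varphi - \tfrac14 \tau_0 X \wedge \varphi$ is established correctly, everything else is linear algebra and the elementary Bochner estimate above.
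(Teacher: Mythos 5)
Your proposal is correct and is essentially the paper's own argument written out in full: the paper's terse proof likewise combines the curl--curl identity \eqref{eq:curl} with the Lichnerowicz--Obata bound (which your Bochner computation reproves, including the equality case characterizing Killing fields) and then identifies the two eigenspaces via Lemma \ref{lemma:formulae}. The only point to tighten is that you derive the formula $\cL_X\psi = -\tfrac{1}{2}\curl X\wedge\varphi-\tfrac{1}{4}\tau_0 X\wedge\varphi$ only for Killing $X$, whereas the two summands in the statement are defined as subsets of all of $\Omega^1$; to see that they automatically lie in $\Omega^1_{\mathrm{Killing}}$, run the same computation with the general form of Lemma \ref{lemma:formulae}, giving $\cL_X\psi=-\tfrac{4}{7}(d^*X)\psi-\tfrac{1}{4}(2\curl X+\tau_0 X)\wedge\varphi-\tfrac{1}{2}*i_\varphi((\cL_Xg)_0)$, and compare components in the orthogonal splitting $\Omega^4_1\oplus\Omega^4_7\oplus\Omega^4_{27}$, which forces $d^*X=0$ and $(\cL_Xg)_0=0$ (hence $X$ Killing) alongside the curl eigenvalue equation.
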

\begin{proof} Using the Lichnerowicz--Obata theorem, applied to \eqref{eq:curl}, it follows that the non-zero eigenvalues $\tau_0 \lambda$ of $\curl$ on $\Omega^1$ satisfy the inequality $\lambda^2-\lambda\geq \tfrac{3}{4}$. Moreover, the equality case is satisfied if and only if the corresponding one-form is Killing. Finally, we use Lemma \ref{lemma:formulae} to identify the corresponding eigenspaces as above. 
\end{proof}
\begin{remark}
The fact that a Killing field preserves an n$G_2$-structure on a compact 7-manifold if and only if its dual 1-form is a $-\tfrac{1}{2}\tau_0$-eigenform for curl is the content of \cite{Friedrich1997}*{Theorem 6.2}. 
\end{remark}

\begin{remark}
Applying Lemma \ref{lem:killingvf} to the standard Spin(7)-invariant n$G_2$-structure on the round 7-sphere $S^7$, one finds that $\tau_0=4$, the bound $\lambda^2-\lambda\geq \frac{3}{4}$ is saturated, and $\Omega^1_{\text{Killing}}$ is 28-dimensional (isomorphic to the Lie algebra of $O(8)$). In the decomposition of $\Omega^1_{\text{Killing}}$, the $-2$-eigenspace of curl is 21-dimensional (isomorphic to the Lie algebra of Spin(7)) and the $6$-eigenspace of curl is 7-dimensional.
\end{remark}

\begin{remark} One may identify $G_2$-structures inducing a given Riemannian metric with the space of real unit spinors (up to conjugation).  Under this identification, the unit sphere in the space of real Killing spinors $K\slS (M)$: 
\begin{align}
   K\slS (M):= \lbrace \eta \in \slS (M) \mid  \nabla_X \eta = - \tfrac{1}{8} \tau_0 X . \eta \rbrace
\end{align}
corresponds to all compatible n$G_2$-structures. Here, $X \in \Omega^1$ acts on the spinor bundle $\slS (M)$ via   Clifford multiplication, and we take the lift of the Levi-Civita connection to $\slS (M)$.

In particular, $\Omega^1$ defines another infinitesimal action on the space of Killing spinors $ \eta \in K\slS (M)$ via the Lie derivative:
\begin{align}
 \cL_X (\eta) &:= \nabla_X \eta- \tfrac{1}{4} (dX).\eta = - \tfrac{1}{8} ( \tau_0 X + 2 \curl{X}). \eta
\end{align}
 cf.~\cite{Dwivedi2023}*{\S 3.1}.  Notice that this action is trivial only for vector fields preserving the $G_2$-structure, rather than just Killing fields, by Lemma \ref{lem:killingvf}.  
\end{remark}

\subsection{The divergence operator} Another differential operator we shall need is the following.

\begin{definition}\label{dfn:div}   
We define the \emph{divergence} operator $\Div$ on $(M,\varphi)$ as the map\footnote{Note that our sign convention agrees with \cite{Dwivedi2023}, but is opposite to the convention in \cite{Nagy2021}.}:
\begin{equation}
\mathrm{div}:\Sym^2_0\to \Omega^1, \qquad \Div(h)(X) = \sum (\nabla_{e^i}h)(e^i, X),   
\end{equation}
where $e^i$ denotes a local orthonormal frame (for the metric $g$) and $X$ is a tangent vector.  The divergence has adjoint
   \begin{equation}\label{eq:div.star}
       \mathrm{div}^*:\Omega^1\to \Sym^2_0,\quad X\mapsto -\tfrac{1}{2}(\cL_Xg)_0.
   \end{equation}
Denote the space of \emph{conformal Killing fields} $\cC \subset \Omega^1$, i.e.~the kernel of $\Div^*$. 
Apart from the round sphere, $\cC$ is equal to the space of Killing vector fields $\Omega^1_\mathrm{Killing}$ (i.e.~$\cC \cap \ker(d^*)$) on compact Einstein manifolds. 
\end{definition}

We then have a useful decomposition result, which holds for any compact Riemannian manifold.
\begin{proposition}\label{prop:sym20}
    Let $(M, \varphi)$ be compact n$G_2$.  Then, using the notation of Definition \ref{dfn:div}, we have an $L^2$-orthogonal decomposition
    \begin{equation}\label{eq:sym20}
        \Sym^2_0 = \lbrace \tfrac{1}{2}(\cL_Xg)_0\mid X\in \cC^{\perp}  \rbrace  \oplus \lbrace h \in \Sym^2_0 \mid \mathrm{div} (h) =0 \rbrace.
    \end{equation}
\end{proposition}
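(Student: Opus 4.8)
The plan is to recognise the asserted splitting as the standard $L^2$-orthogonal decomposition
\[
\Sym^2_0 = \im(\Div^*) \oplus \ker(\Div)
\]
attached to the formally adjoint pair $\Div^*\co\Omega^1\to\Sym^2_0$ and $\Div\co\Sym^2_0\to\Omega^1$. The first summand is precisely $\im(\Div^*)$: since $\cC=\ker(\Div^*)$, the operator $\Div^*$ annihilates $\cC$, so $\Div^*(\cC^\perp)=\Div^*(\Omega^1)$, which by \eqref{eq:div.star} equals $\lbrace\tfrac12(\cL_Xg)_0\mid X\in\cC^\perp\rbrace$ up to an irrelevant overall sign. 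Note that the nearly $G_2$-structure plays no role: the statement is purely Riemannian, as indicated before the proposition.

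Orthogonality of the two summands is immediate from adjointness, since for $h\in\ker\Div$ and $X\in\Omega^1$ one has $\langle\Div^*X,h\rangle_{L^2}=\langle X,\Div h\rangle_{L^2}=0$. The real content is therefore completeness, i.e.\ that the two subspaces together span $\Sym^2_0$, and this is where closed range, hence ellipticity, must enter.

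The key step is to observe that $\Div^*$ is overdetermined elliptic, i.e.\ its principal symbol is injective. At a nonzero covector $\xi$ the symbol sends $X$ to the trace-free part of the symmetric product $\xi\odot X$; if this vanishes then $\xi_iX_j+\xi_jX_i=\tfrac{2}{7}\langle\xi,X\rangle g_{ij}$, and contracting with $\xi$ forces $X=c\xi$, whereupon substituting back yields $c(2-\tfrac{2}{7})=0$ and hence $X=0$. Consequently $P:=\Div\,\Div^*\co\Omega^1\to\Omega^1$ is a second-order, self-adjoint, elliptic operator on the compact manifold $M$, so it is Fredholm with closed range and gives the $L^2$-orthogonal Hodge splitting $\Omega^1=\ker P\oplus\im P$. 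Since $\langle PX,X\rangle_{L^2}=\|\Div^*X\|_{L^2}^2$, we get $\ker P=\ker\Div^*=\cC$ and therefore $\im P=\cC^\perp$.

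To conclude I would exhibit the decomposition of a given $S\in\Sym^2_0$ explicitly. As $\Div S$ is $L^2$-orthogonal to $\cC=\ker\Div^*$ (indeed $\langle\Div S,Y\rangle_{L^2}=\langle S,\Div^*Y\rangle_{L^2}=0$ for $Y\in\cC$), the one-form $\Div S$ lies in $\im P=\cC^\perp$, so there is a unique $X\in\cC^\perp$ with $PX=\Div S$. Setting $h:=S-\Div^*X$ gives $\Div h=\Div S-PX=0$, so $S=\Div^*X+h$ with $X\in\cC^\perp$ and $h\in\ker\Div$, as required. The only genuine obstacle is the closed-range property, which reduces entirely to the symbol computation above; once $P$ is known to be elliptic, the remainder is routine Hodge theory on a compact manifold.
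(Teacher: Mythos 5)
Your proposal is correct and takes essentially the same approach as the paper: the paper simply invokes the Berger--Ebin splitting (\cite{Besse2008}*{Lemma 4.57}) for the operator $\Div^*$ with injective symbol and kernel $\cC$, which is exactly the decomposition $\Sym^2_0=\Div^*(\cC^\perp)\oplus\ker\Div$ you establish. The only difference is that you prove that splitting from scratch, via the explicit symbol computation and ellipticity of $\Div\,\Div^*$, rather than citing it.
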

\begin{proof}  This result follows directly from \cite{Besse2008}*{Lemma 4.57} but we explain the argument here.
   The map $\mathrm{div}^*$ in \eqref{eq:div.star} has injective symbol and $\ker\mathrm{div}^*=\cC$.  Hence, we have the $L^2$-orthogonal splitting
   \begin{equation*}
       \Sym^2_0=\mathrm{div}^*(\Omega^1)\oplus \ker\mathrm{div}=\mathrm{div}^*(\cC^{\perp})\oplus\ker\mathrm{div}
   \end{equation*}
   as desired.
\end{proof}
We also have the following Weitzenbock formula for $\Div {\Div}^*$ on $\Omega^1$, valid on any Riemannian $n$-manifold, which will be useful throughout, cf.~\cite{Nagy2021}*{Equation 3.8}:
\begin{align} \label{eq:divdiv*}
\Div {\Div}^* X = \tfrac{1}{2} \Delta X + (\tfrac{n-2}{2n})d d^* X - \Ric X   .
\end{align}
For example, using \eqref{eq:divdiv*}, we can characterize exact elements in the kernel of ${\Div}^*$.
\begin{lemma} Let $(M, \varphi)$ be compact n$G_2$ with $d\varphi=\tau_0\psi$. Then $df \in \cC$ given in Definition \ref{dfn:div} if and only if $\Delta df = \tfrac{7}{16} \tau_0^2 df$.   
\end{lemma}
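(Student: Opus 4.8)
The plan is to reduce the condition $df \in \cC = \ker \Div^*$ to a spectral equation by exploiting the Weitzenböck identity \eqref{eq:divdiv*}, since $\cC$ is defined precisely as the kernel of $\Div^*$. The first step is to observe that on the compact manifold $M$ the operator $\Div \Div^*$ is self-adjoint and nonnegative, with
\[
\langle \Div \Div^* df, df \rangle_{L^2} = \| \Div^* df \|_{L^2}^2.
\]
Hence $df \in \cC$ if and only if $\Div \Div^* df = 0$: the forward implication is immediate, and the reverse follows by pairing against $df$ and reading off that the right-hand side above vanishes.

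Next I would apply \eqref{eq:divdiv*} with $X = df$ and $n = 7$, so that $\tfrac{n-2}{2n} = \tfrac{5}{14}$, together with the Einstein condition $\Ric = \tfrac{3}{8}\tau_0^2 g$ that holds on an n$G_2$-manifold. Since $df$ is exact we have $d(df) = 0$, whence $\Delta(df) = dd^*(df)$, so the first two terms of \eqref{eq:divdiv*} combine:
\[
\Div \Div^* df = \left( \tfrac{1}{2} + \tfrac{5}{14} \right) \Delta df - \tfrac{3}{8}\tau_0^2 df = \tfrac{6}{7}\Delta df - \tfrac{3}{8}\tau_0^2 df.
\]

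Finally, setting $\Div \Div^* df = 0$ and solving yields $\Delta df = \tfrac{7}{6}\cdot \tfrac{3}{8}\tau_0^2 df = \tfrac{7}{16}\tau_0^2 df$, which is exactly the claimed eigenvalue equation; running the computation in reverse gives the converse. I do not expect a genuine obstacle here, as the heart of the argument is a direct substitution into the Weitzenböck formula together with the known Einstein constant. The only point requiring a little care is the equivalence in the first step — that vanishing of the symmetric second-order operator $\Div \Div^*$ on $df$ is equivalent to vanishing of $\Div^* df$ — which relies on the compactness of $M$ to justify the integration by parts underlying the adjoint relation $\langle \Div \Div^* df, df\rangle = \|\Div^* df\|^2$.
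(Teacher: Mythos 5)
Your proof is correct and takes essentially the same route as the paper: the paper's (much terser) proof likewise substitutes $df$ into the Weitzenböck formula \eqref{eq:divdiv*}, uses the Einstein constant $\Ric=\tfrac{3}{8}\tau_0^2 g$ and exactness to see that the right-hand side vanishes exactly when $\Delta df=\tfrac{7}{16}\tau_0^2 df$, and then closes the equivalence by integration by parts on the left-hand side. Your write-up simply makes explicit the arithmetic $\bigl(\tfrac{1}{2}+\tfrac{5}{14}=\tfrac{6}{7}\bigr)$ and the adjoint relation $\langle \Div\Div^* df, df\rangle_{L^2}=\|\Div^* df\|_{L^2}^2$ that the paper leaves implicit.
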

\begin{proof}
 In the n$G_2$ setting, the right-hand side of \eqref{eq:divdiv*} vanishes on the one-form $df$ if and only if $\Delta df = \tfrac{7}{16} \tau_0^2 df$, so the result follows using integration by parts on the left-hand side.   
\end{proof}
\subsection{Exact 4-forms} Let us move on now to a decomposition of the space of exact 4-forms $\Omega_{\mathrm{exact}}^4$, since these will be the velocity vectors for our $G_2$-Laplacian co-flow. Firstly, we make the following definition.

\begin{definition} We let 
\begin{equation}\label{eq:K}
    \mathcal{K}:= \Omega^4_{7, \mathrm{closed}}= \lbrace X \wedge \varphi \mid X \in \Omega^1 ,  d ( X \wedge \varphi) = 0 \rbrace.
\end{equation}
(The latter description of $\mathcal{K}$ follows from Lemma \ref{lem:forms.decomp}.)
\end{definition}
It will be more useful to characterize $\mathcal{K}$ as follows. 
\begin{lemma}\label{lem:K.Killing} Let $(M, \varphi)$ be compact n$G_2$ with $d\varphi=\tau_0\psi$ and $X \in \Omega^1$. Then, using the notation in Definitions \ref{dfn:curl} and \eqref{eq:K},  the following are equivalent:
\begin{enumerate}[label=\normalfont{(\roman*)}]
    \item $X \wedge \varphi \in \mathcal{K}$
    \item $\curl{X} = \tfrac{3}{2} \tau_0 X$
    \item $\tau_0 X\wedge\varphi = d*(X\wedge\varphi)$
\end{enumerate}
\end{lemma}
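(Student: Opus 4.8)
The plan is to prove the two equivalences $\mathrm{(i)}\Leftrightarrow\mathrm{(ii)}$ and $\mathrm{(ii)}\Leftrightarrow\mathrm{(iii)}$ separately. Throughout I will use the pointwise identities $*(X\wedge\psi)=X\lrcorner\varphi$ and $*(X\wedge\varphi)=-X\lrcorner\psi$, the eigenvalue characterisations $*(\alpha\wedge\varphi)=2\alpha$ on $\Omega^2_7$ and $*(\alpha\wedge\varphi)=-\alpha$ on $\Omega^2_{14}$ from Lemma~\ref{lem:forms.decomp}, together with the $G_2$ linear-algebra facts $*((Y\lrcorner\varphi)\wedge\psi)=3Y$ and $|X\lrcorner\varphi|^2=3|X|^2$ (both checked on the flat model). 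The first of these says precisely that $\pi_7(dX)=\tfrac13(\curl{X})\lrcorner\varphi$, since $\curl{X}=*(dX\wedge\psi)=*(\pi_7(dX)\wedge\psi)$ as $\pi_{14}(dX)\wedge\psi=0$.

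For $\mathrm{(i)}\Leftrightarrow\mathrm{(ii)}$, I first expand $d(X\wedge\varphi)=dX\wedge\varphi-\tau_0 X\wedge\psi$ and apply $*$, decomposing $dX=\pi_7(dX)+\pi_{14}(dX)$, to obtain
\begin{equation*}
*d(X\wedge\varphi)=\tfrac23\big(\curl{X}-\tfrac32\tau_0 X\big)\lrcorner\varphi-\pi_{14}(dX).
\end{equation*}
The two summands lie in the orthogonal subspaces $\Omega^2_7$ and $\Omega^2_{14}$, so $d(X\wedge\varphi)=0$ if and only if both $\curl{X}=\tfrac32\tau_0 X$ and $\pi_{14}(dX)=0$. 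In particular $\mathrm{(i)}\Rightarrow\mathrm{(ii)}$ is immediate.

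The converse $\mathrm{(ii)}\Rightarrow\mathrm{(i)}$ is the crux, and I expect it to be the main obstacle: I must show $\pi_{14}(dX)=0$, which is \emph{not} forced pointwise by $\mathrm{(ii)}$ and instead needs a global argument. From \eqref{eq:curl}, the hypothesis $\curl{X}=\tfrac32\tau_0 X$ gives $d^*X=0$ (as $\tau_0\neq 0$) and $\Delta X=\tfrac34\tau_0^2 X$. Hence $\int_M|dX|^2=\int_M\langle X,\Delta X\rangle=\tfrac34\tau_0^2\int_M|X|^2$, while under $\mathrm{(ii)}$ we have $\pi_7(dX)=\tfrac12\tau_0\,X\lrcorner\varphi$, so that $\int_M|\pi_7(dX)|^2=\tfrac{\tau_0^2}{4}\int_M|X\lrcorner\varphi|^2=\tfrac34\tau_0^2\int_M|X|^2$. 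Subtracting gives $\int_M|\pi_{14}(dX)|^2=0$, whence $\pi_{14}(dX)=0$ and $\mathrm{(i)}$ follows. The exact cancellation $\tfrac34=\tfrac14\cdot 3$ reflects that the curl eigenvalue $\tfrac32\tau_0$ is precisely the value placing all of $dX$ into $\Omega^2_7$.

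Finally, for $\mathrm{(ii)}\Leftrightarrow\mathrm{(iii)}$, I rewrite $\mathrm{(iii)}$ using that $d^*=*d*$ on $4$-forms of a $7$-manifold, which gives $d*(X\wedge\varphi)=*\,d^*(X\wedge\varphi)$, so $\mathrm{(iii)}$ is equivalent to $d^*(X\wedge\varphi)=\tau_0 *(X\wedge\varphi)$. I then substitute the formula for $d^*(X\wedge\varphi)$ from Lemma~\ref{lemma:formulae} and compare the three mutually orthogonal components in $\Omega^3_1\oplus\Omega^3_7\oplus\Omega^3_{27}$: the equation is equivalent to $d^*X=0$, $\curl{X}=\tfrac32\tau_0 X$, and $(\cL_X g)_0=0$ holding simultaneously. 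The $\Omega^3_7$ component is exactly $\mathrm{(ii)}$, giving $\mathrm{(iii)}\Rightarrow\mathrm{(ii)}$. For $\mathrm{(ii)}\Rightarrow\mathrm{(iii)}$ I recover the two remaining conditions: $d^*X=0$ again follows from \eqref{eq:curl}, and $(\cL_X g)_0=0$ holds because the eigenvalue $\tfrac32\tau_0$ realises the equality case $\lambda^2-\lambda=\tfrac34$ in Lemma~\ref{lem:killingvf}, forcing $X$ to be Killing. (Alternatively, Lemma~\ref{lem:killingvf} identifies this eigenspace with $\{X\mid\cL_X\psi=-\tau_0 X\wedge\varphi\}$, and Cartan's formula with $d\psi=0$ gives $\cL_X\psi=d(X\lrcorner\psi)=-d*(X\wedge\varphi)$, which is $\mathrm{(iii)}$ directly.)
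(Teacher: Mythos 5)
Your proof is correct, and the pointwise identities you use ($*(X\wedge\psi)=X\lrcorner\varphi$, $*((Y\lrcorner\varphi)\wedge\psi)=3Y$, $|X\lrcorner\varphi|^2=3|X|^2$) do hold in the paper's orientation convention. Most of your argument coincides with the paper's: the equivalence (ii)$\Leftrightarrow$(iii) is proved there exactly as you do it, with (iii)$\Rightarrow$(ii) by comparing $\Omega^3_1\oplus\Omega^3_7\oplus\Omega^3_{27}$ components of the formula for $d^*(X\wedge\varphi)$ in Lemma \ref{lemma:formulae}, and (ii)$\Rightarrow$(iii) via the equality case of the Lichnerowicz--Obata estimate behind Lemma \ref{lem:killingvf}; likewise your (i)$\Rightarrow$(ii) is the same pointwise type-decomposition argument as the paper's (the paper writes $dX\wedge\varphi=\tau_0X\wedge\psi$, deduces $dX\in\Omega^2_7$ and then invokes the $d^*(X\wedge\psi)$ formula, whereas you read off the $\Omega^2_7$-component of $*d(X\wedge\varphi)$ directly). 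The genuine difference is your treatment of (ii)$\Rightarrow$(i), which you call the crux. The paper never proves this implication head-on: it closes the cycle by noting that (iii)$\Rightarrow$(i) is immediate, since applying $d$ to $\tau_0 X\wedge\varphi=d*(X\wedge\varphi)$ gives $\tau_0\,d(X\wedge\varphi)=0$ and $\tau_0\neq0$; thus (ii)$\Rightarrow$(iii)$\Rightarrow$(i) comes for free. You instead give a self-contained global argument: \eqref{eq:curl} turns (ii) into $d^*X=0$ and $\Delta X=\tfrac34\tau_0^2X$, so $\int_M|dX|^2=\tfrac34\tau_0^2\int_M|X|^2$, which exactly matches $\int_M|\pi_7(dX)|^2$, forcing $\pi_{14}(dX)=0$. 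This Bochner-type cancellation is valid (and is precisely where compactness enters), and it buys something the paper's route does not: it makes (i)$\Leftrightarrow$(ii) independent of the Killing-field machinery of Lemma \ref{lem:killingvf}, and it explains spectrally why $\tfrac32\tau_0$ is the threshold eigenvalue at which all of $dX$ is absorbed into $\Omega^2_7$. Within your own write-up, however, it is logically redundant: since you prove (ii)$\Rightarrow$(iii) anyway, the one-line observation that (iii)$\Rightarrow$(i) would have replaced the entire energy argument.
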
 
\begin{proof} Clearly (iii) implies (i) as $\tau_0\neq 0$. Using Lemma \ref{lemma:formulae}, we have that (iii) implies (ii), and (ii) implies (iii) by Lemma \ref{lem:killingvf}. Finally, (i) implies that $dX \wedge \varphi = \tau_0 X \wedge \psi$. By \eqref{eq:g2twoforms.14}, this gives $dX \in \Omega^2_7$, and moreover that $dX = \tfrac{\tau_0}{2} *( X \wedge \psi)$. In particular, this implies $d *(X \wedge \psi)=0$, so (ii) follows by Lemma \ref{lemma:formulae}.
\end{proof}

Now, a key result we shall require to study $\Omega^4_{\mathrm{exact}}$ is the following.  (Recall the isomorphism $i_{\varphi}:\Sym^2_0\to\Omega^3_{27}$ mentioned in Lemma \ref{lemma:formulae}.)

\begin{lemma}\label{lem:3forms} Let $(M,\varphi)$ be n$G_2$ with $d\varphi=\tau_0\psi$ and $\sigma\in\Omega^3$.  Then there exist unique $f\in\Omega^0$, $X\in\Omega^1$ and $\eta=i_{\varphi}(h)\in\Omega^3_{27}$ for $h\in \Sym^2_0$, such that
\begin{equation}\label{eq:sigma.decomp}
    \sigma=f\varphi+*(X\wedge\varphi)+\eta.
\end{equation}
Moreover, there are first order linear differential operators 
\begin{align*}
 d^{7}_{14}:\Omega^1\to \Omega^2_{14}& &d^{27}_{14}:\Omega^3_{27}\to\Omega^2_{14}& &d^{27}_{27}:\Omega^3_{27}\to\Omega^4_{27}  
\end{align*}
such that
\begin{align}\label{eq:d.sigma}
  d\sigma &= (\tau_0f+\tfrac{4}{7}d^*X)\psi+ (df+\tfrac{\tau_0}{4}X+\tfrac{1}{2}\curl X-\tfrac{1}{2}\mathrm{div} h)\wedge\varphi+*\tfrac{1}{2}i_{\varphi}(\cL_{X}g)_0+d^{27}_{27}\eta\\
  d^*\sigma &= *\big((-df+\tau_0X-\tfrac{2}{3}\curl X+\tfrac{2}{3}\mathrm{div} h)\wedge\psi\big)+d^7_{14}X+d^{27}_{14}\eta.\label{eq:d*.sigma}
\end{align}
\end{lemma}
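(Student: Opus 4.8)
The plan is to first settle existence and uniqueness of the decomposition \eqref{eq:sigma.decomp}, and then to compute $d$ and $d^*$ by applying them separately to the three summands $f\varphi\in\Omega^3_1$, $*(X\wedge\varphi)\in\Omega^3_7$ and $\eta=i_\varphi(h)\in\Omega^3_{27}$, collecting the pieces according to the type decompositions $\Omega^4=\Omega^4_1\oplus\Omega^4_7\oplus\Omega^4_{27}$ and $\Omega^2=\Omega^2_7\oplus\Omega^2_{14}$. The decomposition itself is immediate: Lemma \ref{lem:forms.decomp} gives $\Omega^3=\Omega^3_1\oplus\Omega^3_7\oplus\Omega^3_{27}$ with $\Omega^3_1=\{f\varphi\}$ and $\Omega^3_7=\{*(X\wedge\varphi)\}$, while $i_\varphi\colon\Sym^2_0\to\Omega^3_{27}$ is an isomorphism, so $f$, $X$ and $h$ (hence $\eta$) are uniquely determined by orthogonal projection.

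For the singlet and $\Omega^3_7$ summands I would argue purely from Lemma \ref{lemma:formulae} together with Hodge duality. On the n$G_2$-manifold $d(f\varphi)=df\wedge\varphi+\tau_0 f\psi$ and $d^*(f\varphi)=-*(df\wedge\psi)$ directly. For the $\Omega^3_7$ term, since $**=\id$ on $\Omega^3$ and on $\Omega^4$ in dimension seven, one has $d\big(*(X\wedge\varphi)\big)=*\,d^*(X\wedge\varphi)$ and $d^*\big(*(X\wedge\varphi)\big)=-*\,d(X\wedge\varphi)$; substituting the expression for $d^*(X\wedge\varphi)$ from Lemma \ref{lemma:formulae} yields the $\psi$-, $\wedge\varphi$- and $\Omega^4_{27}$-contributions of \eqref{eq:d.sigma}, while expanding $d(X\wedge\varphi)=dX\wedge\varphi-\tau_0 X\wedge\psi$ and projecting $dX$ onto $\Omega^2_7\oplus\Omega^2_{14}$ via the eigenvalue characterisations in Lemma \ref{lem:forms.decomp} (so that $*(dX\wedge\varphi)=2\pi_7(dX)-\pi_{14}(dX)$) produces the $\Omega^2_7$- and $\Omega^2_{14}$-parts of \eqref{eq:d*.sigma}. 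The $\curl X$ terms enter here through $\curl X=*(dX\wedge\psi)=*(\pi_7(dX)\wedge\psi)$ together with the standard contraction identity on $\Omega^2_7$ relating $\pi_7(dX)$ to $*(\curl X\wedge\psi)$; this is the source of the coefficient $\tfrac23$ in front of $\curl X$.

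The genuinely new content is the $\eta$-summand, and it is the main obstacle. Here I would proceed type by type. The $\Omega^4_1$-component of $d\eta$ vanishes: wedging with $\varphi$ gives $d\eta\wedge\varphi=d(\eta\wedge\varphi)+\eta\wedge d\varphi=\tau_0\,\eta\wedge\psi=0$ since $\eta\wedge\varphi=\eta\wedge\psi=0$, exactly the computation in Lemma \ref{lem:dtau3}, so $\pi_1(d\eta)=0$ and $\eta$ contributes nothing to the $\psi$-term. The $\Omega^4_7$-component of $d\eta$ and the $\Omega^2_7$-component of $d^*\eta$ are where $\Div h$ appears, and I would extract them by $L^2$-adjointness rather than by direct computation. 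Writing $\pi_7(d\eta)=Y\wedge\varphi$, the pairing $\langle d\eta,X\wedge\varphi\rangle=\langle\eta,d^*(X\wedge\varphi)\rangle$ only sees the $\Omega^3_{27}$-part of $d^*(X\wedge\varphi)$, which by Lemma \ref{lemma:formulae} is $\tfrac12 i_\varphi\big((\cL_X g)_0\big)=-\,i_\varphi(\Div^* X)$; since $\langle\Div h,X\rangle=\langle h,\Div^* X\rangle$ and both $i_\varphi$ and $X\mapsto X\wedge\varphi$ are conformal, this forces $Y$ to be the asserted multiple of $\Div h$. The identical argument with $X\wedge\varphi$ replaced by $*(X\wedge\psi)$, using $d\big(*(X\wedge\psi)\big)=-*\,d^*(X\wedge\psi)$ and the $\Omega^3_{27}$-part of the resulting form, produces the $\Div h$ contribution to the $\Omega^2_7$-part of $d^*\eta$. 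Finally I would simply \emph{define} the remaining operators as the leftover projections, $d^{27}_{27}\eta:=\pi_{27}(d\eta)$, $d^{27}_{14}\eta:=\pi_{14}(d^*\eta)$ and $d^7_{14}X:=\pi_{14}(dX)$; each is a composition of $d$ or $d^*$ with an algebraic projection and hence a first-order linear differential operator, with no further identification required.

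The main difficulty, then, is not conceptual but bookkeeping: pinning down every constant and sign in \eqref{eq:d.sigma}--\eqref{eq:d*.sigma}. The two delicate points are (i) the normalisation constants for $i_\varphi$ and for the conformal maps $X\mapsto X\wedge\varphi$ and $X\mapsto *(X\wedge\psi)$, which convert the $L^2$-adjointness identities into the stated pointwise coefficients of $\Div h$ (the adjointness argument pins down the relevant $\Omega^4_7$- and $\Omega^2_7$-components at the $L^2$ level, and these upgrade to pointwise identities by nondegeneracy of the pairings, using that $\Omega^4_7=\{X\wedge\varphi\}$ and $\Omega^2_7=\{*(X\wedge\psi)\}$ exhaust the relevant bundles); and (ii) the $\Omega^2_7$-contraction identities needed to express $\pi_7(dX)$ through $\curl X$. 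Everything else reduces to direct substitution from Lemma \ref{lemma:formulae} and the Hodge-duality relations on a seven-manifold.
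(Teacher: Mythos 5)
Your strategy is correct and is genuinely more self-contained than the paper's own proof, which is essentially substitution plus citation: the paper obtains \eqref{eq:d.sigma} by the same Hodge-duality trick you use ($d(*\alpha)=*d^*\alpha$ on $\Omega^4$, then Lemma \ref{lemma:formulae}), and for \eqref{eq:d*.sigma} it simply defers to Lemma 2.12 of the cited reference. In fact the $-\tfrac12\Div h$ term in \eqref{eq:d.sigma} is \emph{not} contained in Lemma \ref{lemma:formulae} either (that lemma only treats $f\varphi$, $f\psi$, $X\wedge\varphi$, $X\wedge\psi$), so the paper's ``follows immediately'' also leans on the external citation exactly where you identify the genuinely new content to be. Your adjointness argument supplies this missing piece correctly: the pairing $\langle d\eta,X\wedge\varphi\rangle_{L^2}=\langle\eta,d^*(X\wedge\varphi)\rangle_{L^2}$ sees only the $\Omega^3_{27}$-component $\tfrac12 i_\varphi\big((\cL_Xg)_0\big)$ from Lemma \ref{lemma:formulae}, the analogous pairing against $*(X\wedge\psi)$ works via $d\big(*(X\wedge\psi)\big)=-*d^*(X\wedge\psi)$, the $L^2$ identities upgrade to pointwise ones because $X$ ranges over all of $\Omega^1$, and nothing is missed because $\Omega^1$ occurs with multiplicity one in $\Omega^1\otimes\Sym^2_0$, so $\Div$ is (up to scale) the only first-order equivariant operator $\Sym^2_0\to\Omega^1$ and there is no zeroth-order one at all. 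Your remaining steps ($\pi_1(d\eta)=0$ via the Lemma \ref{lem:dtau3} computation, the coefficient of $\curl X$ via $*\big(*(Z\wedge\psi)\wedge\psi\big)=3Z$, and defining $d^7_{14},d^{27}_{14},d^{27}_{27}$ as residual projections) are all sound.

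One concrete warning about ``the asserted multiple'': if you carry out your scheme honestly you will not reproduce the statement exactly as printed. Both of your pairings compute the \emph{same} scalar, namely $\langle\eta,\tfrac12 i_\varphi((\cL_Xg)_0)\rangle_{L^2}=\langle h,(\cL_Xg)_0\rangle_{L^2}=-2\langle\Div h,X\rangle_{L^2}$ (using $|i_\varphi(h)|^2=2|h|^2$ and $\Div^*X=-\tfrac12(\cL_Xg)_0$). Writing $\pi_7(d\eta)=Y\wedge\varphi$ and $\pi_7(d^*\eta)=*(W\wedge\psi)$, and using $|X\wedge\varphi|^2=4|X|^2$, $|X\wedge\psi|^2=3|X|^2$, this yields $4Y=3W=-2\Div h$: so $Y=-\tfrac12\Div h$, agreeing with \eqref{eq:d.sigma}, but $W=-\tfrac23\Div h$, whereas \eqref{eq:d*.sigma} prints $+\tfrac23\Div h$. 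Adjointness thus forces the two $\Div h$ terms to be negative multiples of $\Div h$ in the ratio $3:4$, so the opposite relative signs in \eqref{eq:d.sigma} and \eqref{eq:d*.sigma} cannot both hold; a flat-model check (take $h=x_1(e^1\otimes e^1-e^2\otimes e^2)$, so $\Div h=e^1$ and $d^*i_\varphi(h)=-(e^{45}+e^{67})$, whose $\Omega^2_7$-part is $-\tfrac23\,e_1\lrcorner\varphi$) confirms that it is the sign in \eqref{eq:d*.sigma} that is off, presumably a slip inherited from the differing conventions of the cited source. This is harmless downstream, since the paper only ever uses the consequence that $\Div h=0$ kills the $\Omega^2_7$-component, but in your write-up you should record the corrected coefficient $-\tfrac23\Div h$ rather than reproduce the printed one; your method is precisely the consistency check that detects this.
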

\begin{proof}
    Equation \eqref{eq:d.sigma} follows immediately from the decomposition \eqref{eq:sigma.decomp} and Lemma \ref{lemma:formulae}.  Equation \eqref{eq:d*.sigma} is a consequence of \cite{Dwivedi2023}*{Lemma 2.12}.
\end{proof}

We have a useful immediate consequence of Lemma \ref{lem:3forms}.

\begin{lemma}\label{lem:Omega427.closed} On an n$G_2$-manifold we  have that the closed forms in $\Omega^4_{27}$ satisfy
\begin{equation}
    \Omega^4_{27,\mathrm{closed}}\subseteq\{*i_{\varphi}(h)\mid\mathrm{div}h=0\}.
\end{equation}
\end{lemma}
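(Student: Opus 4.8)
The plan is to pass through the Hodge dual and reduce closedness of the 4-form to a coclosedness statement for a pure $\Omega^3_{27}$ form, after which formula \eqref{eq:d*.sigma} does all the work. Let $\omega\in\Omega^4_{27,\mathrm{closed}}$ be given. Since the Hodge star maps $\Omega^4_{27}$ isomorphically onto $\Omega^3_{27}$, and $i_\varphi\colon\Sym^2_0\to\Omega^3_{27}$ is an isomorphism, I set $\eta:=*\omega\in\Omega^3_{27}$ and write $\eta=i_\varphi(h)$ for a unique $h\in\Sym^2_0$. As $**=\id$ on both $\Omega^3$ and $\Omega^4$ in dimension $7$, this gives $\omega=*\eta=*i_\varphi(h)$, so it only remains to prove $\mathrm{div}\,h=0$.

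Next I translate the hypothesis $d\omega=0$ into a condition on $\eta$. Using $d^*=-*d*$ on $3$-forms in dimension $7$ and $*\eta=\omega$, I get $d^*\eta=-*d(*\eta)=-*d\omega=0$. Thus $\eta=i_\varphi(h)$ is a coclosed element of $\Omega^3_{27}$, which is exactly the situation controlled by Lemma \ref{lem:3forms}.

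Now I apply Lemma \ref{lem:3forms} to $\sigma=\eta$. In the decomposition \eqref{eq:sigma.decomp} this corresponds to $f=0$ and $X=0$, so formula \eqref{eq:d*.sigma} collapses to
\begin{equation*}
    0=d^*\eta=*\big(\tfrac{2}{3}\,\mathrm{div}\,h\wedge\psi\big)+d^{27}_{14}\eta.
\end{equation*}
The point is that the two summands lie in complementary pieces of the orthogonal splitting $\Omega^2=\Omega^2_7\oplus\Omega^2_{14}$: the first term is of the form $*(Y\wedge\psi)$ with $Y=\tfrac{2}{3}\mathrm{div}\,h\in\Omega^1$, hence lies in $\Omega^2_7$ by \eqref{eq:g2twoforms.7}, while $d^{27}_{14}\eta\in\Omega^2_{14}$ by the definition of that operator in Lemma \ref{lem:3forms}. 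Therefore each summand must vanish separately, and in particular $*(\mathrm{div}\,h\wedge\psi)=0$. Since the map $Y\mapsto *(Y\wedge\psi)$ is a fibrewise isomorphism $\Omega^1\to\Omega^2_7$ (it is the surjection onto the rank-$7$ bundle $\Omega^2_7$ of \eqref{eq:g2twoforms.7}, between bundles of equal rank), this forces $\mathrm{div}\,h=0$, giving the claimed inclusion.

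I do not anticipate a genuine obstacle here: the only thing to be careful about is the bookkeeping in the two preparatory steps, namely the sign identity $d^*=-*d*$ on $3$-forms and the correct specialization $f=X=0$ of \eqref{eq:d*.sigma}. The conceptual content is entirely the observation that the divergence of $h$ is precisely the $\Omega^2_7$-component of $d^*\eta$, which is isolated for free by the representation-theoretic splitting of $\Omega^2$.
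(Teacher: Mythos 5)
Your proof is correct and is precisely the paper's argument: the paper dismisses this lemma as ``immediate from \eqref{eq:d*.sigma}'', and what you have written is exactly the detailed verification of that claim—dualize to get $d^*i_\varphi(h)=0$, specialize \eqref{eq:d*.sigma} to $f=X=0$, and separate the $\Omega^2_7$ and $\Omega^2_{14}$ components to force $\mathrm{div}\,h=0$. No gaps; your care with the sign $d^*=-*d*$ and the injectivity of $Y\mapsto *(Y\wedge\psi)$ fills in exactly what the paper leaves implicit.
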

\begin{proof}
    This is immediate from \eqref{eq:d*.sigma}.
\end{proof}


To achieve our required result for $\Omega^4_{\mathrm{exact}}$ we shall also require the following decomposition of the 4-forms.
\begin{proposition} \cite{Dwivedi2023}*{Proposition 3.7} \label{prop:4forms.decomp} Let $(M, \varphi)$ be compact n$G_2$ and let $\mathcal{K}$ be as in \eqref{eq:K}. Then we have the following direct sum decomposition:  
\begin{equation}\label{eq:4forms.decomp}
 \Omega^4 = \cK \oplus d \Omega^3_1 \oplus d^* \Omega^5_7 \oplus \Omega^4_{27} 
\end{equation} 
\end{proposition}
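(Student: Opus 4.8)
The plan is to establish the two defining properties of a direct sum decomposition separately: that the four subspaces are \emph{independent} (the sum is direct) and that they \emph{span} $\Omega^4$. Throughout I work in the coordinates given by the pointwise $G_2$-splitting $\Omega^4=\Omega^4_1\oplus\Omega^4_7\oplus\Omega^4_{27}$ from \eqref{eq:g2forms.star}, writing a general $4$-form as $a\psi+Y\wedge\varphi+\delta$ with $a\in\Omega^0$, $Y\in\Omega^1$ and $\delta\in\Omega^4_{27}$. Using $d(f\varphi)=\tau_0 f\psi+df\wedge\varphi$, the characterisation $\cK=\{Z\wedge\varphi\mid \curl Z=\tfrac32\tau_0 Z\}$ from Lemma \ref{lem:K.Killing}, and the formula for $d^*(X\wedge\psi)$ in Lemma \ref{lemma:formulae}, each of the three non-$\Omega^4_{27}$ summands is expressed in these coordinates. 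The guiding observation is that $\Omega^4_{27}$ is a \emph{full} summand, so it absorbs any residual $\Omega^4_{27}$-content; hence everything reduces to matching the $\Omega^4_1\oplus\Omega^4_7$-components, i.e. the pair $(a,Y)$.

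For \textbf{directness}, suppose $Z\wedge\varphi+d(f\varphi)+d^*(X\wedge\psi)+\gamma=0$ with $Z\wedge\varphi\in\cK$, $f\in\Omega^0$, $X\in\Omega^1$ and $\gamma\in\Omega^4_{27}$. Projecting onto the three $G_2$-types gives: the $\Omega^4_1$-equation $\tau_0 f+\tfrac37 d^*X=0$; the $\Omega^4_7$-equation $Z+df+\tfrac12\curl X-\tfrac34\tau_0 X=0$; and the $\Omega^4_{27}$-equation fixing $\gamma$ in terms of $(\cL_X g)_0$. Applying $d^*$ to the $\Omega^4_7$-equation and using $d^*Z=0$, $d^*\curl X=0$ from \eqref{eq:curl} together with the $\Omega^4_1$-equation yields $(\Delta+\tfrac74\tau_0^2)f=0$, so $f=0$ and $d^*X=0$, since $\Delta+\tfrac74\tau_0^2$ is a positive operator (here $\tau_0\neq 0$ and compactness are essential). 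Next, applying $\curl$ to the $\Omega^4_7$-equation and substituting the identity $\curl^2 X=\Delta X+\tau_0\curl X$ valid on $\ker d^*$ by \eqref{eq:curl}, together with $\curl Z=\tfrac32\tau_0 Z$, collapses --- after eliminating $Z$ --- to $(\curl-\tfrac32\tau_0)^2 X=0$; self-adjointness of $\curl$ then gives $\curl X=\tfrac32\tau_0 X$. By Lemma \ref{lem:killingvf} this forces $X$ to be Killing, so $(\cL_X g)_0=0$, whence $\gamma=0$ and $d^*(X\wedge\psi)=0$; back-substitution then gives $Z=0$, and $f=0$. Thus every summand vanishes.

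For \textbf{spanning}, given $\beta=a\psi+Y\wedge\varphi+\delta$ I must solve $a=\tau_0 f+\tfrac37 d^*X$ and $Y=Z+df+\tfrac12\curl X-\tfrac34\tau_0 X$ for $f,X,Z$, and then set $\gamma:=\delta+\tfrac12*i_\varphi((\cL_X g)_0)\in\Omega^4_{27}$ to match $\delta$. Splitting $\Omega^1=d(\Omega^0)\oplus\ker d^*$ as in \eqref{eq:splitting1forms} and writing $X=dp+X_c$, $Y=dq+Y_c$, the exact/function part reduces to the scalar equation $(\Delta+\tfrac74\tau_0^2)p=g$ for an explicit $g$, solvable since $\Delta+\tfrac74\tau_0^2$ is positive; this determines $f$. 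The coclosed part becomes $Y_c=Z+L X_c$ with $L:=\tfrac12(\curl-\tfrac32\tau_0)$ on $\ker d^*$, and the point is that $\ker L$ is precisely the $\tfrac32\tau_0$-eigenspace of $\curl$, which corresponds under Lemma \ref{lem:K.Killing} to $\cK$. Using that $\curl$ is self-adjoint with discrete spectrum on $\ker d^*$, I decompose $Y_c$ along this eigenspace and its orthogonal complement, choose $Z$ to be the eigenspace component (so $Z\wedge\varphi\in\cK$) and invert $L$ on the complement to find $X_c$, completing the construction.

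The \textbf{main obstacle} is the coupling caused by the fact that $d^*\Omega^5_7$ does not lie in $\Omega^4_1\oplus\Omega^4_7$ but leaks a $\Omega^4_{27}$-component $-\tfrac12*i_\varphi((\cL_X g)_0)$, so the decomposition is genuinely non-orthogonal and cannot be read off type-by-type. The structural heart of the argument --- for both directness and spanning --- is the identification that the natural first-order operator $L=\tfrac12(\curl-\tfrac32\tau_0)$ governing the $\Omega^4_7$-component has kernel exactly equal to the space defining $\cK$; recognising this, and exploiting the spectral theory of the self-adjoint operator $\curl$ on $\ker d^*$ (invertibility, discreteness, and the eigenvalue gap from Lemma \ref{lem:killingvf}) to invert $L$ on the complementary eigenspaces, is where the real work lies. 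The positivity of $\Delta+\tfrac74\tau_0^2$, which crucially uses $\tau_0\neq 0$ and compactness, disposes of the scalar and exact directions cleanly.
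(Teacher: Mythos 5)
Your proposal is correct, and it is worth noting that the paper itself gives \emph{no} proof of this proposition: the statement is simply quoted from \cite{Dwivedi2023}*{Proposition 3.7}. Your argument therefore supplies a self-contained proof where the paper defers to the literature, and it does so using exactly the toolkit the paper develops for its own later results. Your coordinate computations check out against Lemma \ref{lemma:formulae} and \eqref{eq:d.sigma}; the non-orthogonal ``leak'' of $d^*\Omega^5_7$ into $\Omega^4_{27}$ that you isolate as the main obstacle is precisely what the Remark following the proposition identifies as the source of the error in \cite{Dwivedi2023}; and your operator $L=\tfrac12(\curl-\tfrac32\tau_0)$, whose kernel is matched to $\cK$ via Lemma \ref{lem:K.Killing} and forced to consist of Killing fields via the equality case of Lemma \ref{lem:killingvf}, is the same mechanism the paper runs in Lemma \ref{lem:exact} and Proposition \ref{prop:exact.4forms.decomp} (there packaged as the operator $\cP$ together with the spectral gap for $\curl$). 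Your directness argument is complete: $f=0$ from positivity of $\Delta+\tfrac74\tau_0^2$, then $(\curl-\tfrac32\tau_0)^2X=0$ and formal self-adjointness of $\curl$ give $\curl X=\tfrac32\tau_0 X$, hence $X$ is Killing, so $\gamma=0$, $d^*(X\wedge\psi)=0$ and $Z=0$.

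One point in the spanning step deserves more care. You assert that $\curl$ is ``self-adjoint with discrete spectrum on $\ker d^*$'' and then invert $L$ on the orthogonal complement of its kernel; but $\curl$ alone is \emph{not} elliptic (its principal symbol at $\xi$ is essentially the $G_2$ cross product $X\mapsto\xi\times X$, which annihilates $\R\xi$), so discreteness of the spectrum, the existence of a gap around $\tfrac32\tau_0$, and smoothness of $X_c=L^{-1}(\,\cdot\,)$ are not automatic. They do follow, with a short argument: on $\ker d^*$ the identity \eqref{eq:curl} gives $\Delta=\curl(\curl-\tau_0)$, so $\curl$ preserves each finite-dimensional $\Delta$-eigenspace of coclosed $1$-forms and is diagonalizable there, with eigenvalues $\lambda$ solving $\lambda^2-\tau_0\lambda=\nu$; since the $\Delta$-eigenvalues $\nu\to\infty$, the $\curl$-eigenvalues accumulate only at $\pm\infty$, which yields the required gap, and expanding $Y_c$ in these eigenforms shows $X_c$ lies in every Sobolev space, hence is smooth. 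Since the paper itself freely invokes the spectrum $\sigma(\curl)$ (Definition \ref{dfn:curl}, Lemma \ref{lem:killingvf}, Proposition \ref{prop:Omega417}), your assertion is consistent with its ambient level of rigor, but a standalone proof should include this justification.
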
 

\begin{remark}  The decomposition \eqref{eq:4forms.decomp} is not $L^2$-orthogonal only because the spaces $d^*\Omega^5_7$ and $\Omega^4_{27}$ are not.  This fact means that, unfortunately, the statement and proof of the decomposition of the exact 4-forms in \cite{Dwivedi2023}*{Proposition 3.7} contains an error, which we shall correct below.
\end{remark}

\begin{lemma}\label{lem:exact}
Let $(M,\varphi)$ be compact n$G_2$ with metric $g$ and recall the isomorphism $i_{\varphi}$ in Lemma \ref{lemma:formulae}. 
For all $X\in\Omega^1$ 
there exists $Z\in\Omega^1$ such that 
\begin{equation*}
d^*(Z\wedge\psi)+*\tfrac{1}{2}i_{\varphi}(\cL_Xg)_0\in d(\Omega^3_1\oplus\Omega^3_7).
\end{equation*}
\end{lemma}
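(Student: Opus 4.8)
The plan is to realise the left-hand side as $d\sigma$ for a three-form $\sigma=f\varphi+*(Y\wedge\varphi)\in\Omega^3_1\oplus\Omega^3_7$, solving simultaneously for $f\in\Omega^0$, $Y\in\Omega^1$ and $Z\in\Omega^1$. I would expand $d\sigma$ using Lemma \ref{lem:3forms} (with $\eta=0$) and $d^*(Z\wedge\psi)$ using the fourth formula of Lemma \ref{lemma:formulae}. Since $\psi$, $\,\cdot\wedge\varphi$ and $*i_\varphi(\cdot)$ span the three orthogonal summands $\Omega^4_1,\Omega^4_7,\Omega^4_{27}$, and $X\mapsto X\wedge\varphi$, $\,h\mapsto *i_\varphi(h)$ are isomorphisms, the equation $d\sigma=d^*(Z\wedge\psi)+*\tfrac12 i_\varphi(\cL_Xg)_0$ is equivalent to matching these three components, giving a system for $(f,Y,Z)$.

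The $\Omega^4_{27}$-component reads $(\cL_Yg)_0=(\cL_{X-Z}g)_0$, which I solve by taking $Y:=X-Z$. The $\Omega^4_1$-component then becomes $\tau_0 f=d^*Z-\tfrac47 d^*X$, fixing $f$ uniquely (as $\tau_0\neq0$) and hence $df=\tfrac1{\tau_0}(dd^*Z-\tfrac47 dd^*X)$. Feeding $Y=X-Z$ and this value of $df$ into the $\Omega^4_7$-component and collecting terms reduces the whole problem to a single linear equation for $Z$:
\[
  L(Z):=\curl Z-\tfrac12\tau_0 Z-\tfrac1{\tau_0}dd^*Z=\tfrac{\tau_0}{4}X+\tfrac12\curl X-\tfrac{4}{7\tau_0}dd^*X.
\]

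The crux is then to invert $L$. It is self-adjoint and preserves the splitting $\Omega^1=d(\Omega^0)\oplus\ker d^*$ of \eqref{eq:splitting1forms}, since $\curl Z\in\ker d^*$ and $dd^*Z\in d(\Omega^0)$ always, while $\curl$ annihilates $d(\Omega^0)$. On $d(\Omega^0)$ we have $\curl=0$ and $dd^*=\Delta$, so $L=-\tfrac1{\tau_0}(\Delta+\tfrac12\tau_0^2)$ with $\Delta+\tfrac12\tau_0^2>0$, hence invertible on exact $1$-forms. On $\ker d^*$ we have $dd^*=0$, so $L=\curl-\tfrac12\tau_0$, and invertibility amounts to $\tfrac12\tau_0\notin\sigma(\curl|_{\ker d^*})$. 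This is precisely the spectral gap of Lemma \ref{lem:killingvf}: every nonzero eigenvalue $\tau_0\lambda$ of $\curl$ satisfies $\lambda^2-\lambda\geq\tfrac34$, so $\lambda\geq\tfrac32$ or $\lambda\leq-\tfrac12$, which excludes $\lambda=\tfrac12$; and $\curl$ has trivial kernel on $\ker d^*$, so $0$ is excluded too. As $\curl$ has discrete spectrum on $\ker d^*$ (its eigenspaces refine those of $\Delta=\curl^2-\tau_0\curl$ there, via \eqref{eq:curl}), $\tfrac12\tau_0$ lies outside the spectrum and $L$ is invertible. Solving for $Z$, and then $f$ and $Y=X-Z$, produces $\sigma\in\Omega^3_1\oplus\Omega^3_7$ with $d\sigma=d^*(Z\wedge\psi)+*\tfrac12 i_\varphi(\cL_Xg)_0$, which is the claim. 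The main obstacle is the invertibility of $L$, and the whole argument hinges on the fact that $\lambda=\tfrac12$ violates the eigenvalue bound $\lambda^2-\lambda\geq\tfrac34$.
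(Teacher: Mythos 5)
Your proof is correct, and it follows the paper's reduction exactly before departing at the final step. Like the paper, you take $\sigma=f\varphi+*(Y\wedge\varphi)$, set $Y=X-Z$ to match the $\Omega^4_{27}$ components, and arrive at the same pair of equations $-\tau_0 f+d^*Z=\tfrac47 d^*X$ and $-df+\curl Z-\tfrac{\tau_0}{2}Z=\tfrac12\curl X+\tfrac{\tau_0}{4}X$ (the paper's \eqref{eq:decomp.error}). The difference is in how this linear system is solved: the paper keeps $(f,Z)$ coupled, packages the system as $\cP(f,Z)=(\tfrac47 d^*X,\tfrac12\curl X+\tfrac{\tau_0}{4}X)$ for the operator $\cP$ of \eqref{eq:P}, and invokes the Fredholm alternative, showing $\ker\cP^*=0$ by the positivity of $\Delta+\tfrac{\tau_0^2}{2}$ on functions and the absence of curl eigenvalues equal to $\tfrac{\tau_0}{2}$ (Lemma \ref{lem:killingvf}); you instead eliminate $f$ and invert the single self-adjoint operator $L=\curl-\tfrac{\tau_0}{2}-\tfrac{1}{\tau_0}dd^*$ block-diagonally with respect to the splitting \eqref{eq:splitting1forms}. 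The spectral inputs are identical in both arguments (shifted-Laplacian positivity on the exact part, the spectral gap of $\curl$ at $\tfrac{\tau_0}{2}$ from Lemma \ref{lem:killingvf} on $\ker d^*$), so this is a repackaging rather than a new idea, but it does buy something: your inversion is constructive and sidesteps the closed-range property of $\cP$ that the paper's ``solvable if and only if orthogonal to $\ker\cP^*$'' step tacitly requires, at the price of needing discreteness of $\sigma(\curl|_{\ker d^*})$, which you correctly extract from the identity $\Delta=\curl^2-\tau_0\curl$ on $\ker d^*$ coming from \eqref{eq:curl}. Both routes gloss over comparable elliptic-regularity details, so your argument is at the same level of rigor as the paper's.
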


\begin{proof}
We see from Lemma \ref{lemma:formulae} and \eqref{eq:d.sigma} that if $d\varphi=\tau_0\psi$ then for any $f\in\Omega^0$ and $Y,Z\in\Omega^1$ we have:
\begin{align*}
d(f\varphi+*(Y\wedge\varphi))&=(\tau_0f+\tfrac{4}{7}d^*Y)\psi+(df+\tfrac{1}{2}\curl Y+\tfrac{\tau_0}{4}Y)\wedge\varphi+*\tfrac{1}{2}i_{\varphi}(\cL_Yg)_0;\\ 
   d^*(Z\wedge\psi)+*\tfrac{1}{2}i_{\varphi}(\cL_Xg)_0&=\tfrac{3}{7} d^*Z\psi+(\tfrac{1}{2}\curl Z-\tfrac{3\tau_0}{4}Z)\wedge\varphi+*\tfrac{1}{2}i_{\varphi}(\cL_{X-Z}g)_0.
\end{align*}
Taking $Y=X-Z$, we see that 
\begin{equation*}
 d*(f\varphi+*(Y\wedge\varphi))= d*(f\psi+Y\wedge\varphi)=d^*(Z\wedge\psi)+*\tfrac{1}{2}i_{\varphi}(\cL_Xg)_0 
\end{equation*}
(which means that the right-hand side lies in $d(\Omega^3_1\oplus\Omega^3_7)$)  
if and only if
\begin{equation} \label{eq:decomp.error}
    -\tau_0f+d^*Z=\tfrac{4}{7}d^*X\quad\text{and}\quad -df+\curl Z-\tfrac{\tau_0}{2}Z=\tfrac{1}{2}\curl X+\tfrac{\tau_0}{4}X.
\end{equation}
We may rewrite \eqref{eq:decomp.error} by defining
\begin{equation}\label{eq:P}
\cP:\Omega^0\oplus\Omega^1\to\Omega^0\oplus\Omega^1,\quad \cP(f,Z):=(-\tau_0f+d^*Z,-df+\curl Z-\tfrac{\tau_0}{2}Z)
\end{equation}
giving the equation
\begin{equation}\label{eq:P.solve}
    \cP(f,Z)=(\tfrac{4}{7}d^*X,\tfrac{1}{2}\curl X+\tfrac{\tau_0}{4}X).
\end{equation}
The equation \eqref{eq:P.solve} may be solved if and only if the right-hand side is orthogonal to $\ker\cP^*$, where $\cP^*$ is the adjoint of $\cP$. Explicitly, this adjoint is given by
\begin{equation}
    \cP^*(u,W)=(-\tau_0u-d^*W,du+\curl W-\tfrac{\tau_0}{2}W).
\end{equation}
Suppose $(u,W)\in\ker\cP^*$. Then, since $d^*\curl W=0$ by \eqref{eq:curl}, we have
\begin{equation*}
\Delta u=-\tfrac{\tau_0^2}{2}u
\end{equation*}
and thus $u=0$ as $\tau_0\neq 0$.  Consequently,
\begin{equation*}
    \curl W=\tfrac{\tau_0}{2}W
\end{equation*}
which forces $W=0$ as well by the observation that there are no eigenvalues for $\curl$ in $(0,\tau_0)$ by Lemma \ref{lem:killingvf}.  

This implies that $\ker\cP^*=0$, so $\cP$ in \eqref{eq:P} is surjective (in fact, an isomorphism), so we can solve \eqref{eq:P.solve}, which gives an $f$ and $Z$ as required in \eqref{eq:decomp.error}.
\end{proof}

At this point we may now correct \cite{Dwivedi2023}*{Proposition 3.7}.
\begin{proposition}\label{prop:exact.4forms.decomp} Let $(M,\varphi)$ be compact n$G_2$ and let $\Omega^4_{27,\mathrm{exact}}$ be the exact forms in $\Omega^4_{27}$.   
We then have the following $L^2$-orthogonal decomposition:
\begin{equation}\label{eq:exact.4forms.decomp}
 \Omega^4_{\mathrm{exact}}=d(\Omega^3_1\oplus\Omega^3_7)\oplus \Omega^4_{27,\mathrm{exact}}.   
\end{equation}
\end{proposition}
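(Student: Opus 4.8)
The plan is to establish three things: that both summands on the right of \eqref{eq:exact.4forms.decomp} consist of exact $4$-forms (immediate, since $\Omega^4_{27,\mathrm{exact}}$ is exact by definition and $d(\Omega^3_1\oplus\Omega^3_7)$ manifestly is); that they are $L^2$-orthogonal (which also forces the sum to be direct); and that together they span $\Omega^4_{\mathrm{exact}}$. The genuine content lies in orthogonality and in spanning.

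For orthogonality, I would argue as follows. Since an exact form is closed, Lemma \ref{lem:Omega427.closed} gives $\Omega^4_{27,\mathrm{exact}}\subseteq\{*i_\varphi(h)\mid\Div h=0\}$. On the other hand, by \eqref{eq:d.sigma} the $\Omega^4_{27}$-component of $d(f\varphi+*(X\wedge\varphi))$ is $*\tfrac12 i_\varphi(\cL_X g)_0=-*i_\varphi(\Div^*X)$, using $\Div^*X=-\tfrac12(\cL_X g)_0$. The $\Omega^4_1$- and $\Omega^4_7$-components are pointwise orthogonal to $\Omega^4_{27}$, so it remains to pair the $\Omega^4_{27}$-parts. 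Since $i_\varphi$ is a constant multiple of an isometry (being a $G_2$-equivariant isomorphism between the irreducible $27$-dimensional representations $\Sym^2_0$ and $\Omega^3_{27}$) and $*$ is an isometry, the pairing $\langle -*i_\varphi(\Div^*X),*i_\varphi(h)\rangle$ is a multiple of $\langle\Div^*X,h\rangle=\langle X,\Div h\rangle=0$. Hence the two summands are orthogonal, and in particular the sum is direct.

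For spanning, I would write $\Omega^4_{\mathrm{exact}}=d\Omega^3=d(\Omega^3_1\oplus\Omega^3_7)+d\Omega^3_{27}$, so it suffices to show $d\eta\in d(\Omega^3_1\oplus\Omega^3_7)+\Omega^4_{27,\mathrm{exact}}$ for each $\eta=i_\varphi(h)\in\Omega^3_{27}$. By \eqref{eq:d.sigma}, $d\eta=-\tfrac12\Div h\wedge\varphi+d^{27}_{27}\eta$, whose only non-$\Omega^4_{27}$ component is the $\Omega^4_7$-term $-\tfrac12\Div h\wedge\varphi$. I therefore seek $f'\in\Omega^0$ and $X'\in\Omega^1$ such that $d(f'\varphi+*(X'\wedge\varphi))$ has vanishing $\Omega^4_1$-part and $\Omega^4_7$-part equal to $-\tfrac12\Div h\wedge\varphi$; then $d\eta-d(f'\varphi+*(X'\wedge\varphi))\in\Omega^4_{27}$ is exact, hence lies in $\Omega^4_{27,\mathrm{exact}}$. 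By \eqref{eq:d.sigma} this amounts to solving the first-order linear system $\cQ(f',X')=(0,-\tfrac12\Div h)$, where $\cQ(f',X'):=(\tau_0 f'+\tfrac47 d^*X',\,df'+\tfrac{\tau_0}{4}X'+\tfrac12\curl X')$.

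The main obstacle is the solvability of this system. The operator $\cQ$ is first-order elliptic (its principal symbol is injective, analogously to $\cP$ in Lemma \ref{lem:exact}), hence Fredholm with closed range, so the system is solvable precisely when its right-hand side is $L^2$-orthogonal to $\ker\cQ^*$. A direct computation gives $\cQ^*(u,V)=(\tau_0 u+d^*V,\,\tfrac47 du+\tfrac{\tau_0}{4}V+\tfrac12\curl V)$. For $(u,V)\in\ker\cQ^*$ one has $d^*V=-\tau_0 u$, whence $dd^*V=-\tau_0\,du$, and $\curl V=-\tfrac87 du-\tfrac{\tau_0}{2}V$. Using $\curl\,df=0$ together with the Weitzenböck identity \eqref{eq:curl} for $\curl\curl V$, I would deduce $\Delta V=\tfrac{5\tau_0}{7}du+\tfrac34\tau_0^2 V$; substituting into \eqref{eq:divdiv*} with $n=7$ and $\Ric=\tfrac38\tau_0^2 g$ then yields $\Div\Div^*V=0$. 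Pairing with $V$ gives $\|\Div^*V\|_{L^2}^2=0$, so $V\in\cC$ is a conformal Killing field, and therefore $\langle(0,-\tfrac12\Div h),(u,V)\rangle=-\tfrac12\langle h,\Div^*V\rangle=0$. Thus the right-hand side is orthogonal to $\ker\cQ^*$, the system is solvable, and spanning follows. Together with the orthogonality above, this establishes the $L^2$-orthogonal decomposition \eqref{eq:exact.4forms.decomp}.
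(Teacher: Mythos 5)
Your proof is correct, and it takes a genuinely different route from the paper's. The paper decomposes an arbitrary $\rho\in\Omega^4$ using Propositions \ref{prop:sym20} and \ref{prop:4forms.decomp}, absorbs the term $d^*(Z\wedge\psi)+*\tfrac12 i_\varphi(\cL_Xg)_0$ into $d(\Omega^3_1\oplus\Omega^3_7)$ via Lemma \ref{lem:exact}, and finishes by invoking the exactness of $\cK$ (Lemma \ref{lem:K.Killing}); you bypass that scaffolding entirely, writing $\Omega^4_{\mathrm{exact}}=d(\Omega^3_1\oplus\Omega^3_7)+d\Omega^3_{27}$ and absorbing only the $\Omega^4_7$-component $-\tfrac12\Div h\wedge\varphi$ of $d(i_\varphi(h))$. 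Both arguments ultimately rest on Fredholm solvability of a first-order elliptic system on $\Omega^0\oplus\Omega^1$, but with an essential difference: the paper's operator $\cP$ has trivial adjoint kernel (proved via the curl spectral gap of Lemma \ref{lem:killingvf}), so it can be solved for an \emph{arbitrary} right-hand side, whereas your operator $\cQ$ genuinely can have $\ker\cQ^*\neq 0$ --- applying $d^*$ to your second kernel equation and using $d^*\curl=0$ gives $\Delta u=\tfrac{7}{16}\tau_0^2u$, and on the round $S^7$ (with $\tau_0=4$, $u$ a first eigenfunction, $V$ a multiple of $du$) such kernel elements actually exist --- so you must, and correctly do, verify that the specific right-hand side $(0,-\tfrac12\Div h)$ is orthogonal to $\ker\cQ^*$. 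Your key computation checks out: from $\curl\curl V=\tfrac{4\tau_0}{7}du+\tfrac{\tau_0^2}{4}V$, the identity \eqref{eq:curl} and $dd^*V=-\tau_0\,du$ give $\Delta V=\tfrac{5\tau_0}{7}du+\tfrac34\tau_0^2V$, and then \eqref{eq:divdiv*} with $\Ric=\tfrac38\tau_0^2g$ yields $\Div\Div^*V=0$, hence $V\in\cC$ and the required orthogonality. In terms of trade-offs: the paper's longer route builds infrastructure that is reused elsewhere (the decomposition \eqref{eq:4forms.decomp}, the space $\cK$, invertibility of $\cP$), while yours is shorter and self-contained for this proposition, needing only \eqref{eq:d.sigma}, \eqref{eq:curl}, \eqref{eq:divdiv*} and Lemma \ref{lem:Omega427.closed}, at the cost of a more delicate adjoint-kernel analysis that handles the conformal-Killing borderline case directly rather than ruling it out. (Two minor remarks: your appeal to Schur's lemma for $i_\varphi$ being a constant multiple of a pointwise isometry is valid, but one can avoid it by integrating by parts against $d^*(*i_\varphi(h))\in\Omega^3_{27}$ as the paper does; and your ellipticity claim for $\cQ$ is true --- the symbol sends $(f',X')$ with $\iota_\xi X'=0$ to $(0,f'\xi\pm\tfrac12\,\xi\times X')$, which vanishes only if $f'=0$ and $X'=0$ --- though, like the paper's treatment of $\cP$, you leave this unverified.)
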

\begin{proof} 
By Propositions \ref{prop:sym20} and \ref{prop:4forms.decomp} we know that any $\rho\in\Omega^4$ can be written uniquely as
\begin{equation*}
    \rho=\kappa+d(f\varphi)+d^*(Y\wedge\psi)+*i_{\varphi}(\cL_{X}g)_0+*i_{\varphi}(h)
\end{equation*}
for $\kappa\in\mathcal{K}$, $f\in\Omega^0$, $Y\in \mathcal{K}^\perp$, $X\in\cC^{\perp}$  and $h\in \Sym^2_0$ with $\mathrm{div}h=0$.  By Lemma \ref{lem:exact}, there exists $Z\in\Omega^1$ and $\sigma\in\Omega^3_1\oplus\Omega^3_7$ such that
\begin{equation*}
    d\sigma=d^*(Z\wedge\psi)+*i_{\varphi}(\cL_Xg)_0.
\end{equation*}
We deduce that 
\begin{equation}\label{eq:rho.dsigma}
    \rho-d\sigma=\kappa+d(f\varphi)+d^*((Y-Z)\wedge\psi)+*i_{\varphi}(h).
\end{equation}
Now for any $V\in\Omega^1$, we have that
\begin{equation*}
    \langle *i_{\varphi}h,d^*(V\wedge \psi)\rangle_{L^2}=\langle d*i_{\varphi}h,V\wedge\psi\rangle_{L^2}=-\langle d^*i_{\varphi}(h),*(V\wedge\psi)\rangle_{L^2}=0
\end{equation*}
using \eqref{eq:d*.sigma}, and that $\mathrm{div}h=0$.  Since $\pi_{27}d(f\varphi)=0$ by \eqref{eq:d.sigma}, we deduce that the decomposition on the right-hand side of \eqref{eq:rho.dsigma} is $L^2$-orthogonal. A similar argument using \eqref{eq:d.sigma} shows that the decomposition \eqref{eq:exact.4forms.decomp} is orthogonal.  

Since the image of $d^*$ is orthogonal to the image of $d$, then if $\rho=d\gamma$ is exact, the $L^2$-orthogonality of the decomposition on the righ- hand side of \eqref{eq:rho.dsigma} implies that $d^*((Y-Z)\wedge\psi)=0$. Hence, we have
\begin{equation*}
    d\gamma-d\sigma=\kappa+d(f\varphi)+*i_{\varphi}(h).
\end{equation*}
As $\kappa$ is exact by Lemma \ref{lem:K.Killing}, we deduce that $*i_{\varphi}(h)$ is exact as claimed.  
\end{proof}

\subsection{The operator \texorpdfstring{$d*$}{d*} on exact 4-forms} With the splitting \eqref{eq:exact.4forms.decomp} of the exact 4-forms in place, for the remainder of this section, we will study the differential operator $d*:\Omega^4 \to\Omega^4$. 
\begin{proposition} \label{prop:d*splitting} Let $(M,\varphi)$ be compact n$G_2$.  
    The map $d*:\Omega^4_{\mathrm{exact}}\to\Omega^4_{\mathrm{exact}}$ preserves the $L^2$-orthogonal decomposition \eqref{eq:exact.4forms.decomp}.
\end{proposition}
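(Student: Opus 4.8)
The plan is to exploit two features of the operator: that $d*$ is $L^2$-self-adjoint on $\Omega^4$, and that it visibly preserves the summand $\Omega^4_{27,\mathrm{exact}}$. Self-adjointness together with the $L^2$-orthogonality of \eqref{eq:exact.4forms.decomp} will then force invariance of the complementary summand $d(\Omega^3_1\oplus\Omega^3_7)$ for free, so the only genuine computation is the invariance of $\Omega^4_{27,\mathrm{exact}}$.

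First I would record the self-adjointness. Since $\dim M = 7$ we have $**=\id$ in the relevant degrees and $d^*=*d*$ on $\Omega^4$, so for any $\alpha,\beta\in\Omega^4$, using the adjointness of $d$ and that $*$ is an $L^2$-isometry,
\begin{equation*}
\langle d*\alpha,\beta\rangle_{L^2} = \langle *\alpha, d^*\beta\rangle_{L^2} = \langle *\alpha, *d*\beta\rangle_{L^2} = \langle \alpha, d*\beta\rangle_{L^2}.
\end{equation*}
Moreover $d*$ maps $\Omega^4_{\mathrm{exact}}$ into itself, since $d*\alpha$ is manifestly exact, so $d*$ is a (formally) self-adjoint endomorphism of $\Omega^4_{\mathrm{exact}}$ with respect to the restricted $L^2$ pairing.

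Next I would verify directly that $d*$ preserves $\Omega^4_{27,\mathrm{exact}}$. Take $\alpha\in\Omega^4_{27,\mathrm{exact}}$; being exact it is in particular closed, so by Lemma \ref{lem:Omega427.closed} we may write $\alpha=*i_\varphi(h)$ with $h\in\Sym^2_0$ satisfying $\Div h=0$. Then $*\alpha=i_\varphi(h)=:\eta\in\Omega^3_{27}$, and applying \eqref{eq:d.sigma} of Lemma \ref{lem:3forms} with $f=0$ and $X=0$ gives
\begin{equation*}
d*\alpha = d\eta = -\tfrac{1}{2}(\Div h)\wedge\varphi + d^{27}_{27}\eta = d^{27}_{27}\eta \in \Omega^4_{27},
\end{equation*}
where the $\Omega^4_7$-term drops out precisely because $\Div h=0$. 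Since $d*\alpha$ is also exact, it lies in $\Omega^4_{27,\mathrm{exact}}$. \emph{This is the step I expect to be the main obstacle}: a priori $d$ applied to a form in $\Omega^3_{27}$ carries a nonzero component in $\Omega^4_7$, and its vanishing rests entirely on the divergence-free condition supplied by Lemma \ref{lem:Omega427.closed}, that is, on the fact that here exactness forces $\Div h=0$ via closedness.

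Finally I would deduce invariance of $d(\Omega^3_1\oplus\Omega^3_7)$. Given $\beta\in d(\Omega^3_1\oplus\Omega^3_7)$, the form $d*\beta$ is exact, and for any $\gamma\in\Omega^4_{27,\mathrm{exact}}$ the previous step gives $d*\gamma\in\Omega^4_{27,\mathrm{exact}}$, whence by self-adjointness and the orthogonality of $\beta$ to $\Omega^4_{27,\mathrm{exact}}$ in \eqref{eq:exact.4forms.decomp},
\begin{equation*}
\langle d*\beta,\gamma\rangle_{L^2} = \langle \beta, d*\gamma\rangle_{L^2} = 0.
\end{equation*}
Thus $d*\beta$ is an exact $4$-form orthogonal to $\Omega^4_{27,\mathrm{exact}}$, so by the orthogonal decomposition \eqref{eq:exact.4forms.decomp} it must lie in $d(\Omega^3_1\oplus\Omega^3_7)$. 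This establishes that $d*$ preserves both summands, completing the proof.
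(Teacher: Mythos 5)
Your proof is correct and follows essentially the same route as the paper's: self-adjointness of $d*$ on $\Omega^4_{\mathrm{exact}}$ via $*d^*=d*$ on 4-forms, invariance of $\Omega^4_{27,\mathrm{exact}}$ via Lemma \ref{lem:Omega427.closed} and \eqref{eq:d.sigma}, and invariance of the complementary summand by orthogonality. The only difference is that you spell out the intermediate computations (the adjointness chain and the cancellation of the $\mathrm{div}\,h$ term) that the paper leaves implicit.
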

\begin{proof}
Note first that $d*$ is self-adjoint on $\Omega^4_{\mathrm{exact}}$ since $*d^*=d*$ on 4-forms.

By Lemma \ref{lem:Omega427.closed}, if $\eta=*i_{\varphi}(h)\in\Omega^4_{27,\mathrm{exact}}$ then $\Div h=0$.  By \eqref{eq:d.sigma} we deduce that $d*\eta\in\Omega^4_{27}$ and it is manifestly exact.  Since $d*:\Omega^4_{27,\mathrm{exact}}\to\Omega^4_{27,\mathrm{exact}}$ and $d*$ is self-adjoint on $\Omega^4_{\mathrm{exact}}$, it must preserve the orthogonal complement of $\Omega^4_{27,\mathrm{exact}}$ in \eqref{eq:exact.4forms.decomp}.
\end{proof}
In light of the splitting \eqref{eq:exact.4forms.decomp}, in order to study the operator $d*$ on $d(\Omega^3_1\oplus\Omega^3_7)$, we prove the following lemma.  (Recall the curl operator from Definition \ref{dfn:curl}.)
\begin{lemma} \label{lem:dOmega17} Let $(M,\varphi)$ be compact n$G_2$ with $d\varphi=\tau_0\psi$. Then, for any $\eta \in d(\Omega^3_1\oplus\Omega^3_7)$, there is a unique $(f,X)\in \Omega^0 \oplus \Omega^1$ $L^2$-orthogonal to the finite-dimensional vector space 
\begin{align} \label{eq:kerd}
\lbrace(0,X) \mid \curl{X} = -\tfrac{\tau_0}{2} X \rbrace \oplus \lbrace (-\tfrac{\tau_0}{4} f, df) \mid \Delta f = \tfrac{7}{16} \tau_0^2 f \rbrace   \subset \Omega^0 \oplus \Omega^1 
\end{align}
such that $\eta = d (f \varphi + * (X \wedge \varphi))$. 
\end{lemma}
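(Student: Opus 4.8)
The plan is to realize the assignment $(f,X)\mapsto d\big(f\varphi+*(X\wedge\varphi)\big)$ as a single first-order operator and to pin down its kernel exactly. By Lemma \ref{lem:forms.decomp} the map $(f,X)\mapsto f\varphi+*(X\wedge\varphi)$ is a linear isomorphism $\Omega^0\oplus\Omega^1\to\Omega^3_1\oplus\Omega^3_7$, so if I set
\[
\Phi\co\Omega^0\oplus\Omega^1\to\Omega^4,\qquad \Phi(f,X):=d\big(f\varphi+*(X\wedge\varphi)\big),
\]
then $\im\Phi=d(\Omega^3_1\oplus\Omega^3_7)$, and the lemma is exactly the assertion that $\Phi$ restricts to a bijection from $(\ker\Phi)^{\perp}$ onto its image, with $\ker\Phi$ equal to the space \eqref{eq:kerd}. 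Specializing \eqref{eq:d.sigma} to $h=0$ gives
\[
\Phi(f,X)=\big(\tau_0 f+\tfrac47 d^*X\big)\psi+\big(df+\tfrac{\tau_0}4 X+\tfrac12\curl X\big)\wedge\varphi+*\tfrac12 i_\varphi(\cL_X g)_0,
\]
whose three summands lie in the mutually orthogonal spaces $\Omega^4_1$, $\Omega^4_7$, $\Omega^4_{27}$. Hence $(f,X)\in\ker\Phi$ precisely when \ (a)\ $\tau_0 f+\tfrac47 d^*X=0$,\quad(b)\ $df+\tfrac{\tau_0}4 X+\tfrac12\curl X=0$,\quad(c)\ $(\cL_X g)_0=0$.

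Once $\ker\Phi$ is identified with \eqref{eq:kerd}, the existence and uniqueness statement is formal: the two families in \eqref{eq:kerd} form a $\curl$-eigenspace and a $\Delta$-eigenspace on a compact manifold, hence are finite-dimensional and spanned by smooth forms, so the $L^2$-orthogonal projection $P$ onto $\ker\Phi$ is well-defined and preserves smoothness. Given $\eta\in\im\Phi$, choose any preimage $(f_0,X_0)$ and set $(f,X):=(f_0,X_0)-P(f_0,X_0)$; then $\Phi(f,X)=\eta$ and $(f,X)\perp\ker\Phi$, while uniqueness holds since any two such representatives differ by an element of $\ker\Phi\cap(\ker\Phi)^{\perp}=\{0\}$.

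The core computation is therefore the identification of $\ker\Phi$. For $\supseteq$ I would substitute the two generating families into (a)--(c): for $(0,X)$ with $\curl X=-\tfrac{\tau_0}2 X$, note $d^*X=0$ follows from $d^*\curl=0$ in \eqref{eq:curl}, so (a) and (b) are immediate, while Lemma \ref{lem:killingvf} identifies the eigenvalue $-\tfrac{\tau_0}2$ eigenspace with the $\varphi$-preserving (hence Killing) fields, giving (c); for $(-\tfrac{\tau_0}4 f,df)$ with $\Delta f=\tfrac{7}{16}\tau_0^2 f$, equation (b) holds because $\curl\,df=0$ and the exact terms cancel, (a) reduces to the eigenvalue equation, and (c) holds because $\Delta\,df=\tfrac{7}{16}\tau_0^2\,df$ forces $df\in\cC$ by the characterization of exact conformal Killing fields proved above (cf.~\eqref{eq:div.star}). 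For $\subseteq$, I would Helmholtz-decompose $X=d\phi+W$ with $d^*W=0$ and $\int_M\phi\,\vol=0$ using \eqref{eq:splitting1forms}. Since $\curl\,d\phi=0$ and $\curl W$ is coclosed, equation (b) splits along $\Omega^1=d(\Omega^0)\oplus\ker d^*$ into $d\big(f+\tfrac{\tau_0}4\phi\big)=0$ and $\tfrac{\tau_0}4 W+\tfrac12\curl W=0$; the latter gives $\curl W=-\tfrac{\tau_0}2 W$, and the former gives $f+\tfrac{\tau_0}4\phi=c$ for a constant $c$ (here $M$ is connected). Substituting $f=c-\tfrac{\tau_0}4\phi$ and $d^*X=\Delta\phi$ into (a) and integrating over $M$ kills the $\phi$ and $\Delta\phi$ terms, forcing $c=0$; then (a) collapses to $\Delta\phi=\tfrac{7}{16}\tau_0^2\phi$. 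This exhibits $(f,X)$ as the sum of the two members $(0,W)$ and $(-\tfrac{\tau_0}4\phi,d\phi)$ of \eqref{eq:kerd}, which are moreover $L^2$-orthogonal since $W\in\ker d^*$ and $d\phi\in d(\Omega^0)$.

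The main obstacle is this last bookkeeping: extracting the precise eigenvalue $\tfrac{7}{16}\tau_0^2$ and the vanishing of the constant $c$ from the coupled pair (a),(b), and correctly reading off the coefficient $-\tfrac{\tau_0}4$ in the function slot. A pleasant feature worth highlighting is that condition (c) is never actually needed to constrain the kernel: every solution of (a),(b) turns out to be (conformal) Killing in the relevant sense, so (c) holds automatically --- which is exactly why \eqref{eq:kerd} involves only $\curl$- and $\Delta$-eigendata and no separate conformal-Killing condition. The finite-dimensional projection argument of the second paragraph is then entirely routine.
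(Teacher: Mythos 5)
Your proposal is correct and follows essentially the same route as the paper: it characterizes the kernel of $(f,X)\mapsto d(f\varphi+*(X\wedge\varphi))$ via the three components of \eqref{eq:d.sigma}, splits $X$ using \eqref{eq:splitting1forms}, and extracts the eigenvalue equations $\curl W=-\tfrac{\tau_0}{2}W$ and $\Delta\phi=\tfrac{7}{16}\tau_0^2\phi$. The only difference is one of thoroughness: you also spell out the reverse inclusion (that \eqref{eq:kerd} lies in the kernel, using Lemma \ref{lem:killingvf} and the characterization of exact conformal Killing fields), the normalization argument forcing the constant $c=0$, and the routine orthogonal-projection step for existence and uniqueness, all of which the paper leaves implicit.
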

\begin{proof}
 Let $(f,X)\in \Omega^0 \oplus \Omega^1$. By \eqref{eq:d.sigma}, $d (f \varphi + * (X \wedge \varphi)) = 0$ if and only if:
 \begin{align} \label{eq:kerdproof}
 \tau_0 f = - \tfrac{4}{7} d^* X;& &df  = - \tfrac{\tau_0}{4}X -\tfrac{1}{2} \curl X;& &{\Div}^*X =0.
\end{align}
Using the decomposition \eqref{eq:splitting1forms}, write $X=du+Y$ for some exact one-form $du$, and co-closed one-form $Y$. Then \eqref{eq:kerdproof} implies $\curl{Y}=-\tfrac{\tau_0}{2} Y$, $df = -\tfrac{\tau_0}{4}du$, and $\Delta f = \tfrac{7}{16} \tau_0^2 f$.  
\end{proof}
\begin{remark} For any $u,v \in \Omega^0$, integration by parts shows that $(v,du)$ is orthogonal to \eqref{eq:kerd} if and only if $\tfrac{7}{4}\tau_0 u - v$ is orthogonal to the Laplacian eigenspace $\Delta f = \tfrac{7}{16} \tau_0^2 f$. 
\end{remark}
We are now in a position to prove the following fact about the spectrum of $d*$ acting on exact 4-forms, again using the curl operator from Definition \ref{dfn:curl}. 
\begin{proposition} \label{prop:Omega417} Let $(M,\varphi)$ be compact n$G_2$ with $d\varphi=\tau_0\psi$,  and let $\sigma(d*)$ denote the spectrum of $d*$ on $d(\Omega^3_1\oplus\Omega^3_7)$.  Suppose $\tau_0 \mu \in \sigma(d*)$. Then either $\mu\geq1$  or $\mu< - \tfrac{3}{4}$. Moreover,  if $\mu$ is not $-1$, then we have an $L^2$-orthogonal decomposition of the $\tau_0\mu$-eigenspace:
\begin{align}\label{eq:d*.eigenspace}
\lbrace f \in \Omega^0 \mid \Delta f = \tau_0^2 (\mu+\tfrac{1}{2})(\mu-1)f \rbrace \oplus \lbrace X \in \Omega^1 \mid \curl{X} = \tau_0 (\mu+\tfrac{1}{2}) X \rbrace ,
\end{align}
where the isomorphism takes $(f,X) \mapsto d \big( \tau_0 (\mu +\tfrac{1}{2})f \varphi + * ((df + X) \wedge \varphi)\big) \in d(\Omega^3_1\oplus\Omega^3_7)$. 

In the case $\mu = -1$, the second component in the decomposition \eqref{eq:d*.eigenspace} vanishes. 
\end{proposition}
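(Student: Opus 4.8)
The plan is to parametrize the $\tau_0\mu$-eigenspace via Lemma \ref{lem:dOmega17} and reduce the eigenvalue equation to a coupled scalar/$1$-form system whose solution yields both the claimed isomorphism and the spectral gap. Write an arbitrary element of $d(\Omega^3_1\oplus\Omega^3_7)$ as $\eta=d\sigma$ with $\sigma=a\varphi+*(W\wedge\varphi)$, where by Lemma \ref{lem:dOmega17} the pair $(a,W)$ is unique once taken $L^2$-orthogonal to the kernel \eqref{eq:kerd}. Applying $*$ to the formula for $\eta$ furnished by \eqref{eq:d.sigma} and setting $\theta:=*\eta-\tau_0\mu\,\sigma\in\Omega^3$, the eigenvalue equation $d*\eta=\tau_0\mu\,\eta$ becomes $d\theta=0$.

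The main obstacle is that the $\Omega^4_{27}$-component of $d\theta$ involves the operator $d^{27}_{27}$ of Lemma \ref{lem:3forms}, for which no explicit formula is available. I would circumvent this using orthogonality: by Proposition \ref{prop:d*splitting} we have $d\theta=d*\eta-\tau_0\mu\,\eta\in d(\Omega^3_1\oplus\Omega^3_7)$, while $d\theta$ is manifestly exact. By the $L^2$-orthogonality in \eqref{eq:exact.4forms.decomp}, the space $d(\Omega^3_1\oplus\Omega^3_7)$ meets $\Omega^4_{27,\mathrm{exact}}$ only in $0$; hence once the $\Omega^4_1$- and $\Omega^4_7$-components of $d\theta$ vanish, $d\theta\in\Omega^4_{27,\mathrm{exact}}\cap d(\Omega^3_1\oplus\Omega^3_7)=\{0\}$, so $d\theta=0$ automatically. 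This reduces the full equation to the vanishing of only the $\Omega^4_1$- and $\Omega^4_7$-parts, which \eqref{eq:d.sigma} gives explicitly.

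Next I would decompose $W=dv+Y$ with $d^*Y=0$ via \eqref{eq:splitting1forms}, and simplify using the curl identities \eqref{eq:curl}, the Weitzenböck formula \eqref{eq:divdiv*} with $\Ric=\tfrac{3}{8}\tau_0^2 g$, and \eqref{eq:div.star}. The co-closed part of the $\Omega^4_7$-equation collapses to $(\curl-\tau_0(\mu+\tfrac{1}{2}))(\curl+\tfrac{\tau_0}{2})Y=0$; since the $-\tfrac{\tau_0}{2}$-eigenspace of $\curl$ is exactly the first summand in the kernel \eqref{eq:kerd}, orthogonality forces $\curl Y=\tau_0(\mu+\tfrac{1}{2})Y$ when $\mu\neq-1$, and $Y=0$ when $\mu=-1$ (this is the final sentence of the statement). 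Expanding $a,v$ in Laplacian eigenfunctions, the exact part of the $\Omega^4_7$-equation together with the $\Omega^4_1$-equation becomes, on each eigenspace $\Delta=\lambda$, a $2\times2$ linear system whose determinant factors (after a short computation) as a multiple of $(\lambda-\tau_0^2(\mu+\tfrac{1}{2})(\mu-1))(\lambda-\tfrac{7}{16}\tau_0^2)$. The factor $\lambda=\tfrac{7}{16}\tau_0^2$ reproduces precisely the kernel relation $a=-\tfrac{\tau_0}{4}v$ of \eqref{eq:kerd}, so modulo the kernel the genuine solutions satisfy $\Delta f=\tau_0^2(\mu+\tfrac{1}{2})(\mu-1)f$ with $a=\tau_0(\mu+\tfrac{1}{2})f$ (here $f:=v$), giving the asserted isomorphism; the $L^2$-orthogonality of the two summands follows from the $L^2$-orthogonality of exact and co-closed $1$-forms in \eqref{eq:splitting1forms}.

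Finally, the bounds fall out of the two eigen-equations. A nonzero co-closed part requires $\tau_0(\mu+\tfrac{1}{2})\in\sigma(\curl)\setminus\{0\}$, whence $(\mu+\tfrac{1}{2})^2-(\mu+\tfrac{1}{2})\geq\tfrac{3}{4}$ by Lemma \ref{lem:killingvf}, i.e. $\mu\geq1$ or $\mu\leq-1$; a nonzero non-constant scalar part requires $\tau_0^2(\mu+\tfrac{1}{2})(\mu-1)\geq\tfrac{7}{16}\tau_0^2$ by Lichnerowicz--Obata, i.e. $\mu\geq\tfrac{5}{4}$ or $\mu\leq-\tfrac{3}{4}$, while a constant scalar part forces $\mu=1$ (the value $\mu=-\tfrac{1}{2}$ gives the zero form). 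Together these yield $\mu\geq1$ or $\mu\leq-\tfrac{3}{4}$; and $\mu=-\tfrac{3}{4}$ is excluded, since there the scalar eigenvalue coincides with the kernel value $\tfrac{7}{16}\tau_0^2$ (the determinant acquires a double root, whose only solution is the kernel, as one checks its coefficient $\tau_0(\tfrac{5}{4}-\mu)\neq0$), while the curl part must vanish because $-\tfrac{3}{4}\notin[1,\infty)\cup(-\infty,-1]$. Hence the eigenform is zero at $\mu=-\tfrac{3}{4}$, giving the strict inequality $\mu<-\tfrac{3}{4}$.
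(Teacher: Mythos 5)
Your proof is correct, and while it shares the paper's overall skeleton (parametrization via Lemma \ref{lem:dOmega17}, splitting the $1$-form into exact and co-closed parts, factorizing the curl equation, and concluding with Lemma \ref{lem:killingvf} and Lichnerowicz--Obata), it deviates in two substantive ways that are worth recording. First, you dispose of the $\Omega^4_{27}$-component of the eigenvalue equation abstractly: since $d*\eta-\tau_0\mu\eta$ lies in $d(\Omega^3_1\oplus\Omega^3_7)$ by Proposition \ref{prop:d*splitting} and is exact, the orthogonality of \eqref{eq:exact.4forms.decomp} forces it to vanish as soon as its $\Omega^4_1$- and $\Omega^4_7$-parts do. The paper instead keeps that component as the explicit equation \eqref{eq:divX} and uses it twice: to see that $d*d$ preserves the Laplacian eigenspace splitting, and to derive the relation $v=\tau_0(\mu+\tfrac12)u$ in its case (i); it sidesteps the unknown operator $d^{27}_{27}$ by a different device, rewriting $\eta$ so that its $\Omega^3_{27}$-content sits inside a term $d^*(X\wedge\psi)$ whose Hodge dual is exact and hence contributes nothing to $d*\eta$. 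Second, where the paper runs a three-case analysis organized around the conformal Killing fields $\cC$, you solve the $\Omega^4_1$-equation together with the exact part of the $\Omega^4_7$-equation as a $2\times2$ system on each Laplacian eigenspace; the determinant does factor, up to a positive constant, as $\big(\lambda-\tau_0^2(\mu+\tfrac12)(\mu-1)\big)\big(\lambda-\tfrac{7}{16}\tau_0^2\big)$ (I verified this), so the two roots cleanly separate the genuine eigenforms from the kernel of $d$, and the strict inequality $\mu<-\tfrac34$ falls out of your double-root argument (system nondegenerate there, forcing the kernel relation, which the orthogonality of Lemma \ref{lem:dOmega17} excludes) rather than from the Obata equality case invoked in the paper. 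Your route buys uniformity --- the borderline values $\mu=\tfrac54$ and $\mu=-\tfrac34$ are handled by the same determinant, with no case division --- at the cost of leaning on the global splitting results, Propositions \ref{prop:exact.4forms.decomp} and \ref{prop:d*splitting}, which the paper's direct computation does not need for this step. One point to expand in a final write-up: the $L^2$-orthogonality of the two summands in \eqref{eq:d*.eigenspace} concerns their \emph{images} in $\Omega^4$, so beyond $\langle df,X\rangle_{L^2}=0$ you should note that the $\Omega^4_7$- and $\Omega^4_{27}$-parts of the two images are also mutually $L^2$-orthogonal; both facts follow from $d^*X=0$ after an integration by parts, but this is not literally contained in \eqref{eq:splitting1forms}.
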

\begin{proof}
By Lemma \ref{lem:dOmega17}, we can write any $\eta \in d(\Omega^3_1\oplus\Omega^3_7)$ uniquely using $(f,X) \in \Omega^0_1 \oplus \Omega^1_7$ $L^2$-orthogonal to the finite-dimensional space \eqref{eq:kerd}. We have from Lemma \ref{lem:3forms} and Definition \ref{dfn:div} that
\begin{align*}
\eta&=d(f\varphi+*(X\wedge\varphi)) \\
&= (\tau_0 f + \tfrac{4}{7} d^* X) \psi + (df + \tfrac{\tau_0}{4}X + \tfrac{1}{2} \curl(X) )\wedge \varphi - * i_\varphi ({\Div}^* X) \\
&=(\tau_0 f + d^* X) \psi + (df -\tfrac{\tau_0}{2}X + \curl(X) )\wedge \varphi- d^*( X\wedge \psi),
\end{align*}
where in the last line we have used Lemma \ref{lemma:formulae}. Now, we compute using \eqref{eq:d.sigma}:
\begin{align*}
    d*d(f\varphi+*(X\wedge\varphi))&=(\tfrac{4}{7}\Delta f+\tau_0^2f+\tfrac{5\tau_0}{7}d^*X)\psi\\
    &+ \big(\tfrac{5\tau_0}{4}df-\tfrac{\tau_0^2}{8}X+\tfrac{1}{2}\curl{\curl{X}}+dd^*X\big)\wedge\varphi \\
    &-*i_{\varphi}\big(\mathrm{div}^*(df-\tfrac{\tau_0}{2}X+\curl X) \big).
\end{align*}
The $\Omega^4_{27}$ component gives that an eigenform of the operator $d*$ has eigenvalue $\tau_0 \mu$ only if:
\begin{align} \label{eq:divX}
 \mathrm{div}^*(df-\tfrac{\tau_0}{2}X+\curl X) = \tau_0 \mu  \mathrm{div}^* X.  
\end{align}
Along with the formula \eqref{eq:divdiv*} for $\Div \Div^*$, and \eqref{eq:curl} for $\curl \curl$, we deduce that $d*d$ preserves the splitting of $\Omega^0 \oplus \Omega^1$ into eigenforms of the Laplacian. So, in the remaining equations, we can uniquely decompose 
\begin{align*}
 (f,X)= (v + C, du + Y) \in \Omega^0 \oplus \Omega^1   
\end{align*}
where $u,v \in \Omega^0$ such that $\int_M u  =\int_M v = 0$, $Y \in \Omega^1$ such that $d^*Y = 0$, and $C = \int_M f$ is a constant. Recalling Lemma \ref{lem:dOmega17}, we can also assume that $\curl(Y) \neq -\tfrac{\tau_0}{2} Y$, and if $u,v$ both satisfy $\Delta u =  \tfrac{7}{16} \tau_0^{2} u$, then $v=\tfrac{7}{4}\tau_0 u$. 

The eigenvalue problem for $d*$ then gives that $Y$ is a solution to 
\begin{align} \label{eq:curlY}
 \curl{\curl{Y}} - \tfrac{\tau_0^2}{4} Y = \mu (\tfrac{\tau_0^2}{2} Y + \tau_0 \curl{Y}).   
\end{align}
Since by assumption $\curl(Y) \neq -\tfrac{\tau_0}{2} Y$, \eqref{eq:curlY} factorizes to give a first order equation:
\begin{align} 
 \curl{Y} = \tau_0(\mu + \tfrac{1}{2})Y.    
\end{align}
By Lemma \ref{lem:killingvf}, this implies that $Y$ vanishes unless $|\mu|>1$, or $\mu = 1$.  

Meanwhile, $(v,u)$ satisfy the pair of equations:
\begin{align} \label{eq:deltau}
\tfrac{4}{7}\Delta v+\tau_0^2v+\tfrac{5\tau_0}{7} \Delta u = \tau_0 \mu (\tau_0 v +  \tfrac{4}{7} \Delta u );& &\tfrac{5\tau_0}{4}v-\tfrac{\tau_0^2}{8}u+\Delta u=\tau_0 \mu (v + \tfrac{\tau_0}{4}u);
\end{align}
and $\tau_0^2 C = \tau^2_0 \mu C$. 

Instead of solving the equations \eqref{eq:deltau} for $(v,u)$ simultaneously, we split finding eigenvalues for $d*$ of the form $\tau_0 \mu$ into cases (recalling Definition \ref{dfn:div}):
\begin{enumerate}[label=\normalfont{(\roman*)}]
     \item  Suppose $du, dv \in \cC^\perp$. Then $v = \tau_0 ( \mu + \tfrac{1}{2} ) u$ by \eqref{eq:divX}. From \eqref{eq:deltau}, we  deduce that
    \begin{align*}
      \Delta u = \tau_0^2 (\mu+\tfrac{1}{2})(\mu-1)u. 
    \end{align*}
    By Obata's theorem, we then have that $(v,u)$ vanishes unless $|\mu-\tfrac{1}{4}|>1$, with the strict inequality by the assumption $du, dv \in \cC^\perp$. 
    \item Suppose $du, dv \in \cC = \ker {\Div}^*$. Using \eqref{eq:divdiv*}, this implies both $(v,u)$ lie in the Laplacian eigenspace $\Delta w =  \tfrac{7}{16} \tau_0^{2} w$ for $w \in \Omega^0$. Thus \eqref{eq:deltau} becomes the linear equation for $(v,u)$:
   \begin{align*}
   (\tfrac{5}{4} - \mu) (v + \tfrac{\tau_0}{4} u ) = 0
   \end{align*}  
    By assumption, we must also have that $v=\tfrac{7}{4}\tau_0 u$, so we see that \eqref{eq:deltau} admits non-zero solutions if and only if $\mu = \tfrac{5}{4}$.  
    \item Suppose $\langle du,dv \rangle_{L^2} = 0$, then \eqref{eq:deltau} forces $\mu = \tfrac{5}{4}$, and that $du, dv \in \cC$ as above. 
\end{enumerate}
Combining the observations above completes the proof.
\end{proof}
\section{Stability of \texorpdfstring{$G_2$}{G2}-Laplacian co-flows and spectral data}\label{sec:MLCF}

We now turn to our primary interest, namely geometric flows of co-closed $G_2$-structures which have n$G_2$-structures as critical points. As we saw in the introduction (\S \ref{sec:intro}), there are two natural candidates for such flows: the \emph{normalized $G_2$-Laplacian co-flow} \eqref{eq:nLCF} and the \emph{normalized modified $G_2$-Laplacian co-flow} \eqref{eq:nMLCF}.  Since the former is not known to even have short-time existence we shall only discuss it briefly, and instead focus on the modified co-flow.  

The primary goal of this section is to derive a stability criterion for n$G_2$-structures under \eqref{eq:MLCF} involving the spectrum of the operator $d*$ acting on $\Omega^4_{27,\mathrm{exact}}$, as studied in \S \ref{sec:nG2}.  We shall then apply this result to demonstrate that the standard n$G_2$-structure for the round 7-sphere is highly unstable for the flow.

\subsection{Normalized \texorpdfstring{$G_2$}{G2}-Laplacian co-flows}
We may first consider the following (cf.~\cite{Karigiannis2012}).

\begin{definition}\label{dfn:nLCF} We define the \emph{normalized $G_2$-Laplacian co-flow} for a 1-parameter family $\psi(t)$ (depending on $t$) of Hodge duals of co-closed $G_2$-structures $\varphi(t)$ on a compact 7-manifold $M$ by:
\begin{equation}\label{eq:nLCF.2}
\frac{\partial\psi}{\partial t}=Q_0(\psi):=\Delta \psi-\kappa^2\psi\quad\text{and}\quad d\psi=0,
\end{equation}
for $\kappa\in\R$ with $\kappa\neq 0$, where $\Delta\psi=dd^*\psi=d*d\varphi$ (since $d\psi=0$).   We see that if $[\psi]=0\in H^4(M)$ initially (as is the case for n$G_2$-structures) then this is preserved by \eqref{eq:nLCF.2}.  
\end{definition} 

We also have the following elementary observation concerning n$G_2$-structures and \eqref{eq:nLCF.2}.

\begin{lemma}  
An n$G_2$-structure $\varphi$ with $d\varphi=\tau_0\psi$ is a critical point of \eqref{eq:nLCF.2} if and only if $\tau_0^2=\kappa^2$.
\end{lemma}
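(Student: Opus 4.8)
The plan is to unwind the definition of a critical point and reduce everything to a direct computation of the Hodge Laplacian on $\psi$. By \eqref{eq:nLCF.2}, the n$G_2$-structure $\varphi$ is a critical point precisely when $Q_0(\psi)=\Delta\psi-\kappa^2\psi=0$, i.e.\ when $\psi$ is an eigenform of the Hodge Laplacian with eigenvalue $\kappa^2$. So the whole statement comes down to showing that, for an n$G_2$-structure, one has $\Delta\psi=\tau_0^2\psi$, after which the equivalence with $\tau_0^2=\kappa^2$ is immediate.

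To compute $\Delta\psi$, I would use that an n$G_2$-structure is in particular co-closed, so $d\psi=0$ and the formula recorded in Definition \ref{dfn:nLCF} gives $\Delta\psi=dd^*\psi=d*d\varphi$. Now substitute the defining equation $d\varphi=\tau_0\psi$. Since $\psi=*\varphi$ and $**=\mathrm{id}$ on $3$-forms on a $7$-manifold, we have $*\psi=\varphi$; hence $*d\varphi=\tau_0*\psi=\tau_0\varphi$, and applying $d$ yields
\begin{equation*}
\Delta\psi=d*d\varphi=\tau_0\,d\varphi=\tau_0^2\psi,
\end{equation*}
where in the last step we used $d\varphi=\tau_0\psi$ once more together with the fact that $\tau_0$ is constant (so it passes through $d$).

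Consequently $Q_0(\psi)=(\tau_0^2-\kappa^2)\psi$. Since $\psi$ is nowhere vanishing (it is the Hodge dual of a $G_2$-structure, so $|\psi|^2=7$), this expression is zero if and only if $\tau_0^2=\kappa^2$, which is exactly the claim. There is no real obstacle here: the argument is a one-line calculation, and the only point requiring minor care is the sign/normalization bookkeeping for the Hodge star, namely the identity $*\psi=\varphi$ and the already-stated reduction $\Delta\psi=d*d\varphi$ valid when $d\psi=0$; both are consequences of the conventions fixed in \S\ref{sec:G2structures} and Definition \ref{dfn:nLCF}.
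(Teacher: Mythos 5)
Your proof is correct and is exactly the computation the paper has in mind: it states this lemma as an ``elementary observation'' without proof, and the identical calculation $\Delta\psi=d*d\varphi=\tau_0\,d\varphi=\tau_0^2\psi$ appears implicitly in the proof of Lemma~\ref{lem:scaling} (equations \eqref{eq:scaling.2}--\eqref{eq:scaling.3}). Nothing is missing; your care with $*\psi=\varphi$ and the nonvanishing of $\psi$ is appropriate.
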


\begin{remark}\label{rmk:coflow.Hitchin}
    If we take $\kappa=0$ in \eqref{eq:nLCF.2} then it is demonstrated in \cite{Grigorian2013} that this flow is the positive gradient flow for the Hitchin volume function defined on $\psi\in [\psi(0)]$:
    \begin{equation}\label{eq:Hitchin.vol}
        \mathcal{V}(\psi)=\frac{1}{7}\int_M\varphi\wedge\psi.
    \end{equation}
It is also shown that critical points of the flow would then be torsion-free $G_2$-structures and are strict maxima of $\mathcal{V}$, modulo diffeomorphisms.   In this way, the $G_2$-Laplacian co-flow has many of the desirable geometric features that mirror those of the well-studied $G_2$-Laplacian flow of \emph{closed} $G_2$-structures introduced by Bryant, cf.~\cite{Bryant2006}.    
\end{remark}

 \begin{remark}\label{rmk:coflow.analysis}
It follows from \cite{Grigorian2013} that, unlike the Ricci flow or aforementioned $G_2$-Laplacian flow,  \eqref{eq:nLCF.2} is not even parabolic modulo diffeomorphisms.  As well as lacking a proof of short-time existence of \eqref{eq:nLCF.2}, the structure of the flow equation \eqref{eq:nLCF.2} means that, even if one assumes existence of the flow, results such as ``Shi-type'' estimates are also unavailable.  
\end{remark}
 
 Given the analytic issues with \eqref{eq:nLCF.2} raised in Remark \ref{rmk:coflow.analysis}, we are motivated to introduce a modification of the flow following \cite{Grigorian2013}.



\begin{definition}\label{dfn:nMLCF} We define the \emph{normalized modified $G_2$-Laplacian co-flow} for a 1-parameter family $\psi(t)$ of Hodge duals of co-closed $G_2$-structures $\varphi(t)$ on a compact 7-manifold $M$ by:
    \begin{equation} \label{eq:flow} \frac{\partial\psi}{\partial t}=Q(\psi):= \Delta \psi + \tfrac{1}{2} d \big( (5 \gamma \kappa - 7 \tau_0) \varphi\big) + \tfrac{5}{2} (1-\gamma) \kappa^2 \psi\quad\text{and}\quad d\psi=0,
\end{equation}
for constants $\gamma,\kappa\in\R$ with $\kappa\neq 0$ and where $\tau_0$ is given by \eqref{eq:g2torsion}.  Note that taking $\gamma=1$ in \eqref{eq:flow} gives the modified $G_2$-Laplacian co-flow introduced in \cite{Grigorian2013}.  Again, the condition that $[\psi]=0\in H^4(M)$ is preserved by \eqref{eq:flow}, which is the situation of interest for us when studying n$G_2$-structures.
\end{definition}

\begin{remark}
A key motivation for \eqref{eq:flow} is that now, by \cite{Grigorian2013}, the flow is parabolic modulo diffeomorphisms.  It therefore has short-time existence and, moreover, enjoys ``Shi-type'' estimates by \cite{GaoChen2018b}.  Another desirable feature is that if one takes $\gamma=1$ and $\kappa=0$ in \eqref{eq:flow} then torsion-free $G_2$-structures are critical points, and they are dynamically stable under the flow \cite{Vezzoni2020}.
\end{remark}

To help motivate the precise form of the flow \eqref{eq:flow}, we now relate it to n$G_2$-structures.

\begin{lemma}\label{lem:nG2.nMLCF}
    A $G_2$-structure $\varphi$ with $d\varphi=\tau_0\psi$ is a critical point of \eqref{eq:flow} if and only if  $\tau_0=\kappa$ or $\tau_0=(\gamma-1)\kappa$.
\end{lemma}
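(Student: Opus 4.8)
The plan is to directly substitute the n$G_2$-type condition $d\varphi = \tau_0\psi$ into the flow velocity $Q(\psi)$ in \eqref{eq:flow} and determine when it vanishes. Since $\varphi$ is a critical point exactly when $Q(\psi)=0$, and since a critical point of a co-closed flow with $d\varphi=\tau_0\psi$ already has $\tau_1=\tau_2=\tau_3=0$, the computation reduces to evaluating each term of $Q(\psi)$ under this ansatz. First I would note that when $d\varphi=\tau_0\psi$ with $\tau_0$ a constant, we have $\Delta\psi = dd^*\psi = d*d\varphi = \tau_0\, d*\psi = \tau_0\, d\varphi = \tau_0^2\psi$, using $d\psi=0$, $*\psi=\varphi$, and $d\varphi = \tau_0\psi$. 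This handles the leading term cleanly.

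Next I would compute the middle term $\tfrac{1}{2}d\big((5\gamma\kappa - 7\tau_0)\varphi\big)$. Since $\tau_0$ is constant and $\kappa,\gamma$ are constants, the scalar coefficient $(5\gamma\kappa - 7\tau_0)$ is constant, so this term equals $\tfrac{1}{2}(5\gamma\kappa - 7\tau_0)\,d\varphi = \tfrac{1}{2}(5\gamma\kappa - 7\tau_0)\tau_0\psi$. Assembling all three pieces, I would obtain
\begin{equation*}
Q(\psi) = \Big(\tau_0^2 + \tfrac{1}{2}(5\gamma\kappa - 7\tau_0)\tau_0 + \tfrac{5}{2}(1-\gamma)\kappa^2\Big)\psi.
\end{equation*}
Since $\psi$ is nowhere zero, $Q(\psi)=0$ if and only if the scalar coefficient vanishes. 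I would then simplify the coefficient to a quadratic in $\tau_0$:
\begin{equation*}
\tau_0^2 - \tfrac{7}{2}\tau_0^2 + \tfrac{5}{2}\gamma\kappa\tau_0 + \tfrac{5}{2}(1-\gamma)\kappa^2 = -\tfrac{5}{2}\tau_0^2 + \tfrac{5}{2}\gamma\kappa\tau_0 + \tfrac{5}{2}(1-\gamma)\kappa^2,
\end{equation*}
and after dividing by $-\tfrac{5}{2}$ this becomes $\tau_0^2 - \gamma\kappa\tau_0 + (\gamma-1)\kappa^2 = 0$.

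Finally I would factor this quadratic. One checks that $\tau_0^2 - \gamma\kappa\tau_0 + (\gamma-1)\kappa^2 = (\tau_0 - \kappa)\big(\tau_0 - (\gamma-1)\kappa\big)$, which vanishes precisely when $\tau_0 = \kappa$ or $\tau_0 = (\gamma-1)\kappa$, as claimed. The verification is by expanding the product: the linear coefficient is $-(\kappa + (\gamma-1)\kappa) = -\gamma\kappa$ and the constant term is $(\gamma-1)\kappa^2$, matching. I do not expect any serious obstacle here; the result is a purely formal consequence of the constancy of $\tau_0$ and the identity $\Delta\psi = \tau_0^2\psi$ for an n$G_2$-structure. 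The only point requiring a little care is justifying $\Delta\psi = \tau_0^2\psi$, which rests on $d*\psi = d\varphi$ and $*d\varphi = *\tau_0\psi = \tau_0\varphi$; this follows immediately from the conventions in \eqref{eq:G2.structure} and the definition of an n$G_2$-structure.
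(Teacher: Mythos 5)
Your proof is correct and follows essentially the same route as the paper: substituting $d\varphi=\tau_0\psi$ into $Q(\psi)$, using $\Delta\psi = d*d\varphi = \tau_0^2\psi$, and factoring the resulting quadratic to get $Q(\psi)=-\tfrac{5}{2}(\tau_0-\kappa)\big(\tau_0-(\gamma-1)\kappa\big)\psi$. The paper simply states this factored identity and omits the intermediate computation, which you have supplied correctly.
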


\begin{proof}
If $d\varphi=\tau_0\psi$ then
\begin{equation*}
    Q(\psi)=-\tfrac{5}{2}(\tau_0-\kappa)(\tau_0-(\gamma-1)\kappa)\psi.
\end{equation*}
The result now follows from \eqref{eq:flow}.
\end{proof}

\begin{remark}
Lemma \ref{lem:nG2.nMLCF} shows that, for any $\gamma$, we have an n$G_2$ critical point for \eqref{eq:flow} with $\tau_0=\kappa$.  However, unless $\gamma=1$ or $\gamma=2$, we will always have a second n$G_2$ critical point. 
\end{remark}

As an aside, we look at the behaviour of the Hitchin volume functional under \eqref{eq:flow}, for which we recall the torsion forms in \eqref{eq:g2torsion}.

\begin{lemma} Along the flow \eqref{eq:flow}, the Hitchin volume functional $\mathcal{V}$ in \eqref{eq:Hitchin.vol} satisfies:
\begin{align*}
 \tfrac{d}{d t} \mathcal{V}= \tfrac{1}{4} \int_M \left( \tfrac{1}{7} | \tau_3|^2 - \tfrac{5}{2}( \tau_0 - \kappa)(\tau_0 - (\gamma-1) \kappa) \right) \vol 
\end{align*}
 In particular, $\mathcal{V}$ is monotonically increasing for $\kappa>0$ and $\gamma>1$ if $\kappa< \tau_0<(\gamma - 1)\kappa$.   
\end{lemma}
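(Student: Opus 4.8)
The plan is to differentiate the Hitchin volume functional $\mathcal{V}$ in \eqref{eq:Hitchin.vol} directly along the flow \eqref{eq:flow}, reducing everything to the pairing of $Q(\psi)$ with $\varphi$. The key preliminary fact I would establish is the first variation formula
\begin{equation*}
\tfrac{d}{dt}\mathcal{V} = \tfrac14\int_M \tfrac{\partial\psi}{\partial t}\wedge\varphi = \tfrac14\int_M Q(\psi)\wedge\varphi .
\end{equation*}
This holds because only the $\Omega^4_1$ (conformal) component $4f\psi$ of a variation $\dot\psi$ changes the volume: writing $\dot\psi = 4f\psi + (\text{terms in }\Omega^4_7\oplus\Omega^4_{27})$, the induced metric variation satisfies $\tfrac12\tr_g\dot g = 7f$, so $\tfrac{d}{dt}\vol = 7f\,\vol$, while $f = \tfrac{1}{28}\langle\dot\psi,\psi\rangle$ since $|\psi|^2 = 7$; equivalently this is Euler's identity for the degree-$\tfrac74$ homogeneity of $\mathcal{V}$ in $\psi$. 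Using $*\psi = \varphi$ then turns $\langle\dot\psi,\psi\rangle\,\vol$ into $\dot\psi\wedge\varphi$.

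Next I would substitute $Q(\psi)$ from \eqref{eq:flow} and evaluate the three resulting pairings with $\varphi$ term by term. For the Laplacian term, integration by parts gives $\int_M\Delta\psi\wedge\varphi = \|d^*\psi\|_{L^2}^2$; since the structure is co-closed, $d^*\psi = *d*\psi = *d\varphi = \tau_0\varphi + \tau_3$ by \eqref{eq:coclosed}, and the orthogonality of $\tau_3\in\Omega^3_{27}$ to $\varphi$ yields $\int_M\Delta\psi\wedge\varphi = \int_M(7\tau_0^2 + |\tau_3|^2)\,\vol$ (using $|\varphi|^2 = 7$ and $\tau_3\wedge*\tau_3 = |\tau_3|^2\vol$, exactly as in the proof of Lemma \ref{lem:dtau3}). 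For the exact term I would use $\varphi\wedge\varphi = 0$, so that $d\big((5\gamma\kappa - 7\tau_0)\varphi\big)\wedge\varphi = (5\gamma\kappa - 7\tau_0)\,d\varphi\wedge\varphi$, together with $\varphi\wedge d\varphi = 7\tau_0\,\vol$ (Lemma \ref{lem:g2torsion}); this contributes $\tfrac72\int_M\tau_0(5\gamma\kappa - 7\tau_0)\,\vol$. The final term is immediate from $\psi\wedge\varphi = 7\vol$, giving $\tfrac{35}{2}(1-\gamma)\kappa^2\int_M\vol$.

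Finally I would collect the three contributions. The terms not involving $\tau_3$ should reassemble into the quadratic $(\tau_0-\kappa)(\tau_0-(\gamma-1)\kappa)$ appearing in Lemma \ref{lem:nG2.nMLCF}: indeed, specializing to the n$G_2$ case ($\tau_0$ constant, $\tau_3 = 0$) reduces the formula to $\tfrac14\int_M Q(\psi)\wedge\varphi$ with $Q(\psi) = -\tfrac52(\tau_0-\kappa)(\tau_0-(\gamma-1)\kappa)\psi$ as computed there, which serves as a useful consistency check on the constants. Assembling the pieces produces the stated identity. The monotonicity claim is then a sign analysis: $|\tau_3|^2\ge 0$ always, and when $\kappa < \tau_0 < (\gamma-1)\kappa$ (which forces $\gamma > 2$ for $\kappa>0$) one has $\tau_0 - \kappa > 0$ and $\tau_0 - (\gamma-1)\kappa < 0$, so the quadratic is negative and its negative multiple is positive; hence the integrand is strictly positive and $\mathcal{V}$ is strictly increasing.

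The main obstacle I anticipate is pinning down the first variation formula and handling the torsion cross-terms correctly: one must verify that the $\Omega^4_7$ and $\Omega^4_{27}$ components of $\dot\psi$ genuinely do not contribute to $\tfrac{d}{dt}\mathcal{V}$, and that the cross-term $\langle\tau_0\varphi,\tau_3\rangle$ and the wedge $\tau_3\wedge\psi$ both vanish by the defining orthogonality of $\Omega^3_{27}$. Once these are in hand, the remainder is bookkeeping of constants and the factorization into $(\tau_0-\kappa)(\tau_0-(\gamma-1)\kappa)$.
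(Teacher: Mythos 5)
Your method is correct and essentially identical to the paper's: the paper's proof quotes the first-variation formula $\tfrac{d}{dt}\mathcal{V}=\tfrac14\int_M\varphi\wedge\pi_1\big(\tfrac{\partial\psi}{\partial t}\big)$ and then reads off $\pi_1(Q(\psi))$ from \eqref{eq:coclosed} and Lemma \ref{lem:dtau3}, which is precisely your pairing of $Q(\psi)$ with $\varphi$ (your integration-by-parts treatment of the $\Delta\psi$ term simply re-proves Lemma \ref{lem:dtau3} inline). The one point to flag is your final claim that the pieces ``produce the stated identity'': in fact your (correct) three contributions sum to
\begin{equation*}
\tfrac14\int_M\Big(|\tau_3|^2-\tfrac{35}{2}(\tau_0-\kappa)(\tau_0-(\gamma-1)\kappa)\Big)\vol
=\tfrac74\int_M\Big(\tfrac17|\tau_3|^2-\tfrac52(\tau_0-\kappa)(\tau_0-(\gamma-1)\kappa)\Big)\vol,
\end{equation*}
which is \emph{seven times} the displayed right-hand side, the factor coming from $\varphi\wedge\psi=7\vol$. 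The same factor is dropped in the paper's own final step: with $\pi_1\big(\tfrac{\partial\psi}{\partial t}\big)=\lambda\psi$ one has $\varphi\wedge\pi_1\big(\tfrac{\partial\psi}{\partial t}\big)=7\lambda\vol$, not $\lambda\vol$, so the lemma as stated carries a harmless constant error (the prefactor should be $\tfrac74$), and this is a defect of the statement rather than of your argument; the sign, and hence the monotonicity conclusion, is unaffected. Note that the n$G_2$ consistency check you propose would actually have detected this had you evaluated both sides: at a critical point $\tfrac14\int_M Q(\psi)\wedge\varphi=-\tfrac{35}{8}(\tau_0-\kappa)(\tau_0-(\gamma-1)\kappa)\int_M\vol$, whereas the stated formula gives $-\tfrac58(\tau_0-\kappa)(\tau_0-(\gamma-1)\kappa)\int_M\vol$.
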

\begin{proof} Using e.g.~\cite{Grigorian2013}*{Equation 3.18}, we have that 
\begin{align}
\tfrac{d}{dt} \mathcal{V} = \tfrac{1}{4}\int_M \varphi \wedge \pi_1(\tfrac{\partial \psi}{\partial t}),  
\end{align}
so the statement is immediate from \eqref{eq:flow} and Lemma \ref{lem:dtau3}.
\end{proof}

\subsection{Stability} We now proceed to discuss our main interest concerning the normalized $G_2$-Laplacian co-flows we have introduced, and that is their behaviour near n$G_2$ critical points, particularly their (in)stability.

The most obvious cause for the possible instability of an n$G_2$-structure along a geometric flow comes from rescaling the $G_2$-structure.  Our next result includes an extension of an observation in \cite{Vezzoni2020} for \eqref{eq:flow} for $\gamma=1$.  

\begin{lemma}\label{lem:scaling}
Suppose we are given an initial n$G_2$-structure $\varphi_0$ with $d\varphi_0=\kappa\psi_0$. Then $\psi_0$ is always stable under rescaling for \eqref{eq:nLCF.2} but is stable under rescaling for \eqref{eq:flow} if and only if $\gamma>2$.  Moreover, if $\gamma>2$ then the other n$G_2$ critical point of \eqref{eq:flow} is unstable.
\end{lemma}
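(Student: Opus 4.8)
The plan is to restrict both flows to the one-parameter family of rescalings of $\varphi_0$, on which each flow reduces to a scalar ODE whose fixed points are exactly the two n$G_2$ critical points, and then to linearise at these fixed points.

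First I would record the scaling behaviour. Writing $\tilde\varphi=\lambda\varphi_0$ for $\lambda>0$, formula \eqref{eq:G2.structure} forces the induced metric to scale as $\tilde g=\lambda^{2/3}g$, and hence $\tilde\psi=*_{\tilde g}\tilde\varphi=\lambda^{4/3}\psi_0$. Since $d\tilde\varphi=\lambda\,d\varphi_0=\lambda\kappa\psi_0=\kappa\lambda^{-1/3}\tilde\psi$, the rescaled structure is again n$G_2$ with torsion $\tilde\tau_0=\kappa\lambda^{-1/3}$. Parametrising the ray by $c>0$ via $\tilde\psi=c\psi_0$ (so $c=\lambda^{4/3}$), this reads
\begin{equation*}
\tilde\tau_0=\kappa\,c^{-1/4}.
\end{equation*}

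Next I would observe that this ray is invariant under both flows. For \eqref{eq:nLCF.2} one has $\Delta\tilde\psi=\tilde\tau_0^2\tilde\psi$ (using $d^*\tilde\psi=\tilde\tau_0\tilde\varphi$ and $d\tilde\varphi=\tilde\tau_0\tilde\psi$), so $Q_0(\tilde\psi)=(\tilde\tau_0^2-\kappa^2)\tilde\psi$; for \eqref{eq:flow}, Lemma \ref{lem:nG2.nMLCF} gives $Q(\tilde\psi)=-\tfrac52(\tilde\tau_0-\kappa)(\tilde\tau_0-(\gamma-1)\kappa)\tilde\psi$. In both cases the velocity is a multiple of $\tilde\psi$, so the ray $\{c\psi_0\}$ is preserved and the flow becomes $\dot c=c\,h(\kappa c^{-1/4})$, where $h$ is the relevant scalar factor. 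The critical point $\psi_0$ is $c=1$, and stability under rescaling is exactly linear stability of this fixed point, governed by the sign of $\tfrac{d}{dc}\big(c\,h(\kappa c^{-1/4})\big)$ at $c=1$.

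Then I would carry out the two linearisations. For \eqref{eq:nLCF.2}, $\dot c=\kappa^2(c^{1/2}-c)$ has derivative $-\tfrac12\kappa^2<0$ at $c=1$, so $\psi_0$ is always stable. For \eqref{eq:flow}, the chain rule together with $h'(\kappa)=-\tfrac52\kappa(2-\gamma)$ and $\tfrac{d}{dc}(\kappa c^{-1/4})|_{c=1}=-\tfrac14\kappa$ gives derivative $\tfrac58\kappa^2(2-\gamma)$, which is negative (stable) precisely when $\gamma>2$, using $\kappa>0$. Finally, for the last assertion I would note that the \emph{other} n$G_2$ critical point $\tilde\tau_0=(\gamma-1)\kappa$ lies on the very same ray, at $c_*=(\gamma-1)^{-4}$ (positive since $\gamma>2$). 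The same linearisation, now with $h'((\gamma-1)\kappa)=-\tfrac52\kappa(\gamma-2)$ and $\tfrac{d}{dc}(\kappa c^{-1/4})|_{c_*}=-\tfrac14\kappa(\gamma-1)^5$, yields derivative $\tfrac58\kappa^2(\gamma-2)(\gamma-1)$, which is strictly positive for $\gamma>2$; hence this fixed point is unstable, and instability within the rescaling direction already forces instability of the critical point. The only genuinely delicate point is the bookkeeping of the scaling exponents relating the scale factor on $\psi$ to the change in $\tau_0$; once $\tilde\tau_0=\kappa c^{-1/4}$ is pinned down and the invariance of the ray is noted, everything reduces to the elementary sign computations above.
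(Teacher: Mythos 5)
Your proposal is correct and follows essentially the same route as the paper: the paper restricts to the scaling ansatz $\varphi=\mu^3\varphi_0$, $\psi=\mu^4\psi_0$ (your $c=\mu^4$), derives the scalar ODEs \eqref{eq:scaling.ODE.2} and \eqref{eq:scalingODE}, and linearises at $\mu=1$ and $\mu=(\gamma-1)^{-1}$, obtaining exactly your derivatives $-\tfrac{1}{2}\kappa^2$, $\tfrac{5}{8}\kappa^2(2-\gamma)$ and $\tfrac{5}{8}\kappa^2(\gamma-1)(\gamma-2)$. Your only cosmetic difference is invoking the factored form of $Q$ from Lemma \ref{lem:nG2.nMLCF} together with $\tilde\tau_0=\kappa c^{-1/4}$ instead of substituting the rescaled forms directly into the flow equations, which amounts to the same computation.
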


\begin{proof}
As in \cite{Vezzoni2020} 
we make the ansatz, for some function $\mu(t)$:
\begin{equation}\label{eq:scaling}
\varphi = \mu^3 \varphi_0\quad\text{and}\quad\psi = \mu^4 \psi_0.     
\end{equation}
We then have that
\begin{equation}\label{eq:scaling.2}
d\varphi=\mu^3\kappa\psi_0=\mu^{-1}\kappa\psi    
\end{equation}
and so
\begin{equation}\label{eq:scaling.3}
\Delta\psi=d*d\varphi=\mu^{-2}\kappa^2\psi=\mu^2\kappa^2\psi_0.
\end{equation}

Putting this information in the normalized $G_2$-Laplacian co-flow \eqref{eq:nLCF.2} gives the ODE:
\begin{equation}\label{eq:scaling.ODE.2}
   \dot{\mu}=\tfrac{1}{4\mu}\kappa^2(1-\mu^2). 
\end{equation}
Clearly, the critical point corresponds to $\mu=1$ and is stable for \eqref{eq:scaling.ODE.2}.

If we instead insert the equations \eqref{eq:scaling}--\eqref{eq:scaling.3} into the modified co-flow \eqref{eq:flow}   we obtain the ODE:
\begin{align} \label{eq:scalingODE}
 \dot{\mu} = \tfrac{5}{8\mu}\kappa^2 ( \mu (1- \gamma) +1) (\mu-1)   
\end{align}
Since $$\left. \tfrac{\partial \dot{\mu}}{\partial \mu} \right|_{\mu =1} =  \tfrac{5}{8}\kappa^2 (2- \gamma)\quad\text{and}\quad \left. \tfrac{\partial \dot{\mu}}{\partial \mu} \right|_{\mu =(1-\gamma)^{-1}} =  \tfrac{5}{8}\kappa^2 (1-\gamma)(2- \gamma),$$ we see that $\mu =1$ is a stable critical point of \eqref{eq:scalingODE} if and only if $\gamma>2$, and that the other critical point with $\mu=\frac{1}{\gamma-1}$ is unstable in this case. 
\end{proof}

\begin{remark}
A corollary of Lemma \ref{lem:scaling} is the fact that the single n$G_2$ critical point of the original flow in \cite{Grigorian2013}, where $\gamma=1$ in \eqref{eq:flow}, is trivially unstable cf.~\cite{Vezzoni2020}.  We see that the case $\gamma=2$ in \eqref{eq:flow} is also special, since the two n$G_2$ critical points coincide, but then it is again trivially unstable.

Otherwise if $\gamma>1$, up to a constant rescaling of $\kappa$, we can always ensure that the n$G_2$-structure with $\tau_0=\kappa$ is the n$G_2$ critical point of \eqref{eq:flow} that is stable under (time-dependent) rescaling.  
\end{remark}
\begin{remark} Given Lemma \ref{lem:scaling}, from now on when discussing stability of n$G_2$-structures under \eqref{eq:flow} we will impose $\gamma>2$.
\end{remark}

Now that we have dealt with the trivial possible instability of our flow, we now want to tackle the full stability question.  To do this, we compute the linearisation of \eqref{eq:flow} at an n$G_2$-structure, recalling that it will be a flow of exact 4-forms if the initial condition is exact, along with the decomposition in Lemma \ref{lem:forms.decomp} and the curl operator in Definition \ref{dfn:curl}.  We also compute the linearisation of the normalized $G_2$-Laplacian co-flow \eqref{eq:nLCF.2} at an n$G_2$-structure, as this may be of independent interest. 

\begin{proposition} \label{prop:linearization} Let $(M,\varphi)$ be compact n$G_2$ with $d\varphi=\kappa\psi$ and let $\dot{\psi} = 4 \rho_0 \psi + \rho_1 \wedge \varphi - * \rho_3$ be an exact 4-form on $M$, where $\rho_0\in\Omega^0$, $\rho_1\in\Omega^1$ and $\rho_3\in\Omega^3_{27}$.  The linearisations of $Q_0$ in \eqref{eq:nLCF.2} and  $Q$ in \eqref{eq:flow} at $\psi$ in the direction of $\dot{\psi}$ 
are given by: 
\begin{align}
D_\psi Q_0(\dot{\psi})&= -\cL_{V(\dot{\psi})}\psi-\Delta_{\psi}\dot{\psi}+2d((d^*\rho_1)\varphi)+\kappa d\big(\tfrac{3}{2}*\pi_1(\dot{\psi})-*\pi_7(\dot{\psi})-2*\pi_{27}(\dot{\psi})\big)-\kappa^2\dot{\psi};\label{eq:Q0.linearisation}\\
D_\psi Q (\dot{\psi}) &= - \cL_{V(\dot{\psi})} \psi - \Delta_\psi \dot{\psi} + \tfrac{\kappa}{2} d\big(  \lambda_1 * \pi_1 ( \dot{\psi}) + \lambda_7  * \pi_7 ( \dot{\psi}) + \lambda_{27} * \pi_{27} ( \dot{\psi})\big) + \tfrac{5}{2} (1-\gamma) \kappa^2 \dot{\psi}, \label{eq:Q.linearisation}
\end{align}
where $V(\dot{\psi}) = 7 d\rho_0 + 2 \curl{\rho_1}$ is the DeTurck vector field, and the constants $\lambda_1,\lambda_7,\lambda_{27}$ are
\begin{align*}
    \lambda_1 = \tfrac{15\gamma -2}{4},& &\lambda_7 = 5\gamma - 9,& &\lambda_{27}= 3 - 5 \gamma.
\end{align*}
\end{proposition}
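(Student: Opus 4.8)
The plan is to linearise each term of $Q$ (and of $Q_0$) directly, reducing everything to the first-order operators studied in \S\ref{sec:nG2}. Since $\varphi$, the metric $g$, the Hodge star and $\tau_0$ are all determined by $\psi$, the whole computation rests on a dictionary between the given variation $\dot\psi$ and the induced variations $\dot\varphi$, $\dot g$, $\dot\tau_0$. Linearising the algebraic map $\varphi\mapsto\psi=*_\varphi\varphi$, which acts on the three summands of $\Omega^3$ in \eqref{eq:g2forms} as $\tfrac43*$, $*$ and $-*$ respectively (the $\Omega^3_7$ part contributing nothing to $\dot g$, as $\dim\Sym^2=28=\dim(\Omega^3_1\oplus\Omega^3_{27})$), one reads off from $\dot\psi=4\rho_0\psi+\rho_1\wedge\varphi-*\rho_3$ that
\[\dot\varphi=3\rho_0\varphi+*(\rho_1\wedge\varphi)+\rho_3,\qquad \dot g=2\rho_0\,g+2\,i_\varphi^{-1}(\rho_3).\]
I would also record that the velocity is closed, $d\dot\psi=0$ (the flow preserves co-closedness), which lets me replace $dd^*\dot\psi$ by $\Delta_\psi\dot\psi$ whenever it appears.

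The two identities that drive the computation are $(\dot*)\psi=\dot\varphi-*\dot\psi$, obtained by differentiating $*_\varphi\psi=\varphi$, and $*\,d\dot\varphi=d^*(*\dot\varphi)$, valid for $3$-forms in dimension $7$. Writing $\Delta_\varphi\psi=d*d\varphi$ and using $d\varphi=\kappa\psi$ at the critical point, these give
\[\tfrac{d}{dt}\big(d*d\varphi\big)=d\big[\kappa(\dot\varphi-*\dot\psi)\big]+dd^*(*\dot\varphi),\]
where $\dot\varphi-*\dot\psi=-\rho_0\varphi+2\rho_3$ is purely algebraic. I would then compute $\dot\tau_0$ from $7\tau_0\,\vol=d\varphi\wedge\varphi$ (Lemma \ref{lem:g2torsion}), using $\dot{\vol}=7\rho_0\,\vol$ and the $\Omega^4_1$-component of $d\dot\varphi$ supplied by \eqref{eq:d.sigma}, obtaining $\dot\tau_0=-\kappa\rho_0+\tfrac47 d^*\rho_1$. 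This feeds the linearisation $\tfrac12 d\big[(5\gamma-7)\kappa\,\dot\varphi-7\dot\tau_0\,\varphi\big]$ of the modification term, while the normalisation term contributes simply $\tfrac52(1-\gamma)\kappa^2\dot\psi$.

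With these in hand, the remaining step is to expand $d^*(*\dot\varphi)$ via Lemma \ref{lemma:formulae} and Lemma \ref{lem:3forms} applied to $*\dot\varphi=3\rho_0\psi+\rho_1\wedge\varphi+*\rho_3$, and to collect terms. The principal part assembles into $-\Delta_\psi\dot\psi$ (this second-order structure is also guaranteed by the parabolicity modulo diffeomorphisms of \eqref{eq:flow} from \cite{Grigorian2013}), and the non-parabolic first-derivative contributions coming from the $\Omega^2_7$/$\Omega^3_7$ pieces of Lemma \ref{lemma:formulae} organise into the DeTurck term $-\cL_{V(\dot\psi)}\psi=-d\big(V(\dot\psi)\lrcorner\psi\big)$ with $V(\dot\psi)=7d\rho_0+2\curl\rho_1$, the coefficients $7$ and $2$ tracing back to the structural constants $\tfrac47,\tfrac14$ in Lemma \ref{lemma:formulae} and the spectral identity \eqref{eq:curl} for $\curl\curl$. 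Everything else is genuinely first order and, once rewritten using $*\pi_1\dot\psi=4\rho_0\varphi$, $*\pi_7\dot\psi=*(\rho_1\wedge\varphi)$ and $*\pi_{27}\dot\psi=-\rho_3$, produces the stated $\tfrac{\kappa}{2}d\big(\lambda_1*\pi_1\dot\psi+\lambda_7*\pi_7\dot\psi+\lambda_{27}*\pi_{27}\dot\psi\big)$ and pins down $\lambda_1,\lambda_7,\lambda_{27}$. The case of $Q_0$ in \eqref{eq:Q0.linearisation} runs through the identical scheme but without the modification term; the only visible difference is that the $\tfrac47 d^*\rho_1$ part of $\dot\tau_0$ is no longer present to cancel the term $2d((d^*\rho_1)\varphi)$ generated by $dd^*(*\dot\varphi)$, which is exactly why \eqref{eq:Q0.linearisation} carries that extra term and the different constants $\tfrac32,-1,-2$.

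The main obstacle is the bookkeeping in this last step: cleanly separating the second-order part into $-\Delta_\psi\dot\psi$ together with the DeTurck Lie derivative, and then tracking every first-order contribution without sign or coefficient errors — in particular the cross-terms between the metric-variation pieces $\tfrac12 i_\varphi((\cL_{\rho_1}g)_0)$ of Lemma \ref{lemma:formulae} and the $\Omega^4_{27}$-directions. The repeated use of $d\dot\psi=0$ and of \eqref{eq:curl} is what forces the relevant factorisations to close up exactly and the constants to come out as claimed.
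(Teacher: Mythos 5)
Your proposal is correct and follows essentially the same route as the paper's proof: the same variation dictionary $\dot\varphi = 3\rho_0\varphi + *(\rho_1\wedge\varphi)+\rho_3$, the same computation $\dot\tau_0 = \tfrac{4}{7}d^*\rho_1 - \kappa\rho_0$ via Lemma \ref{lem:g2torsion}, the same linearisation $\tfrac{1}{2}d\big[(5\gamma-7)\kappa\dot\varphi - 7\dot\tau_0\varphi\big]$ of the modification term, and the same regrouping through Lemma \ref{lemma:formulae} using $d\dot\psi = 0$. The only streamlining you miss is that the paper disposes of your acknowledged bookkeeping obstacle by expanding the combination $p(\dot\psi) + d^*\dot\psi$, where $p(\dot\psi) := *d\dot\varphi - \kappa(\rho_0\varphi - 2\rho_3)$, in which the $*d\rho_3$ contributions cancel identically — so the $\Omega^3_{27}$-operators of Lemma \ref{lem:3forms} never need to be expanded, the Laplacian enters simply as $-dd^*\dot\psi = -\Delta_\psi\dot\psi$, and the DeTurck term is read off from $7*(d\rho_0\wedge\varphi) + 2d^*(\rho_1\wedge\varphi)$ via Lemma \ref{lemma:formulae} alone.
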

\begin{proof}  Let $\psi_s$ be a 1-parameter family of Hodge duals of $G_2$-structures $\varphi_s$ with $\psi_0=\psi$ and $\frac{\partial}{\partial s}\psi_s|_{s=0}=\dot{\psi}$.  Throughout we shall use the dot notation to indicate $\frac{\partial}{\partial s}|_{s=0}$.

First, we note that, using e.g.~\cite{Bryant2006}*{Proposition 6},
\begin{align}\label{eq:dot.varphi}
\dot{\varphi}=
\dot{(*\psi)}= 3 \rho_0 \varphi + * ( \rho_1 \wedge \varphi) + \rho_3\quad\text{and hence}\quad *\dot{\psi} = \dot{\varphi}+\rho_0 \varphi - 2\rho_3. 
\end{align}
 Using \eqref{eq:g2torsion} and the fact that $\tau_0=\kappa$ and $\tau_3=0$ at $s=0$, we have
\begin{align}\dot{(d\varphi)}&=d\dot{ \varphi} = \dot{\left( \tau_0 \psi\right)} + \dot{\left(* \tau_3\right)}=\dot{\tau_0}\psi+\kappa\dot{\psi}+*\dot{\tau_3};\label{eq:d.dot.varphi}\\
\label{eq:dot.star.d.varphi}
\dot{(*d\varphi)}&=\dot{(\tau_0\varphi)}+\dot{\tau_3}=\dot{\tau_0}\varphi+\kappa\dot{\varphi}+\dot{\tau_3}.
\end{align}
From \eqref{eq:dot.varphi} and \eqref{eq:d.dot.varphi} we deduce that 
\begin{align}\label{eq:dot.tau3}
\dot{\tau_3} = *d \dot{\varphi} - \dot{\tau_0} \varphi - \kappa \left( \dot{\varphi} + \rho_0 \varphi - 2 \rho_3 \right). \end{align}
Note that Lemma \ref{lem:g2torsion} together with \eqref{eq:dot.varphi} and \eqref{eq:d.dot.varphi} gives
\begin{equation}\label{eq:dot.tau0}
    \dot{\tau}_0= \tfrac{4}{7} d^* \rho_1 -  \kappa \rho_0.
\end{equation}

We see that
\begin{equation*}
    \tfrac{\partial}{\partial s}|_{s=0}(\Delta_{\psi_s}\psi_s)=d\dot{(*d\varphi)},
    \end{equation*}
so if we let 
\begin{equation}\label{eq:p}
p(\dot{\psi}) := * d \dot{\varphi} - \kappa ( \rho_0 \varphi - 2 \rho_3 ),\end{equation} then \eqref{eq:dot.star.d.varphi} and \eqref{eq:dot.tau3} imply that 
\begin{equation}\label{eq:d.p} 
\tfrac{\partial}{\partial s}|_{s=0}(\Delta_{\psi_s} \psi_s) = d p ( \dot{\psi}).
\end{equation}
Using \eqref{eq:d.dot.varphi} and \eqref{eq:p} we may compute
\begin{align}
p(\dot{\psi}) + d^* \dot{\psi} &=  7  * ( d\rho_0 \wedge \varphi) + 2 d^* (\rho_1 \wedge \varphi) + 2 \kappa ( 3 \rho_0 \varphi + \rho_3) \nonumber\\
&= 7  * ( d\rho_0 \wedge \varphi) + 2 (d^*\rho_1) \varphi + 2 *\big(( \curl{\rho_1} - \tfrac{1}{2} \kappa \rho_1 ) \wedge \varphi\big) + 2 \kappa ( 3 \rho_0 \varphi + \rho_3)\nonumber\\ &\qquad + d ( * (\rho_1 \wedge \psi )) \nonumber\\
&= - V(\dot{\psi}) \lrcorner \psi + 2 (d^*\rho_1) \varphi -  * (\kappa \rho_1  \wedge \varphi) + 2 \kappa ( 3 \rho_0 \varphi + \rho_3) + d ( * (\rho_1 \wedge \psi ))\label{eq:p.d.star.psi}
\end{align}
where we have used Lemma \ref{lemma:formulae} in the second line and the stated formula for $V(\dot{\psi})$ in the final line.  The linearisation \eqref{eq:Q0.linearisation} of $Q_0$ follows immediately from \eqref{eq:p.d.star.psi}.
 
Similarly, if we let 
\begin{equation}\label{eq:q}
q(\dot{\psi}) := - \tfrac{1}{2} \big(( 7 - 5\gamma) \kappa \dot{\varphi} + 7 \dot{\tau_0} \varphi \big),
\end{equation}
we have that 
\begin{equation}\label{eq:d.q}
\tfrac{\partial}{\partial s}|_{s=0} \big(\tfrac{1}{2} d ( (5 \gamma \kappa - 7 \tau_0) \varphi_s)\big) = d q(\dot{\psi}).
\end{equation} 
Then we may compute \eqref{eq:q} using \eqref{eq:dot.varphi} and \eqref{eq:dot.tau0} to obtain 
\begin{align}\label{eq:q.2}
q(\dot{\psi}) =  -\tfrac{1}{2} \big( (4 d^* \rho_1 + (14 - 15 \gamma ) \rho_0 \kappa) \varphi + \kappa ( 7 - 5\gamma)( *  ( \rho_1 \wedge \varphi) + \rho_3 )\big).
\end{align}
Putting \eqref{eq:p.d.star.psi} and \eqref{eq:q.2} together, we get that
\begin{align}
p(\dot{\psi}) + q(\dot{\psi}) + d^* \dot{\psi} +  V(\dot{\psi}) \lrcorner \psi &=  \tfrac{\kappa}{2} \big( (15 \gamma - 2) \rho_0 \varphi +  ( 5 \gamma - 9) *  ( \rho_1 \wedge \varphi) + (5 \gamma -3) \rho_3 \big)\\ &\qquad + d ( * (\rho_1 \wedge \psi )) \nonumber\\
&= \tfrac{\kappa}{2} \left( \lambda_1 \pi_1 (* \dot{\psi}) +  \lambda_7 \pi_7 (* \dot{\psi}) + \lambda_{27} \pi_{27} (* \dot{\psi}) \right)\\ &\qquad + d ( * (\rho_1 \wedge \psi )).\label{eq:linearisation.final}
\end{align}
The linearisation \eqref{eq:Q.linearisation} now follows from \eqref{eq:flow}, \eqref{eq:d.p}, \eqref{eq:d.q} and \eqref{eq:linearisation.final}. 
\end{proof}
Motivated by Proposition \ref{prop:linearization}, if we fix a co-closed $G_2$-structure on $M$ with Hodge dual $\psi_0$ then for any co-closed $G_2$-structure with Hodge dual $\psi\in[\psi_0]\in H^4(M)$ we may define the following operator:
\begin{equation}\label{eq:hatQ}
 \hat{Q}(\psi):= Q(\psi) + \cL_{V(\psi-\psi_0)} \psi,   
\end{equation}
where $Q$ is as in \eqref{eq:flow}.  Then the flow equation
\begin{equation*}
    \frac{\partial\psi}{\partial t}=\hat{Q}(\psi)
\end{equation*}
is equivalent up to the pullback by diffeomorphism to \eqref{eq:flow}, but now
kills the Lie derivative term appearing appearing in the linearized operator of Proposition \ref{prop:linearization} and thus  is now genuinely parabolic.
%
Moreover, by Proposition \ref{prop:d*splitting}, $D_\psi \hat{Q}$ maps $\Omega^4_{27,\mathrm{exact}}$ to itself, so we can consider the restriction to this subspace. 
\begin{lemma} \label{lemma:eigenvalue} Let $(M,\varphi)$ be compact n$G_2$ with 
$d\varphi=\kappa\psi$ and let $\hat{Q}$ be given by \eqref{eq:hatQ}. Then 
$D_\psi \hat{Q} (\dot{\psi})$ is negative-definite restricted to $\Omega^4_{27,\mathrm{exact}}$ if and only if $d *$ on $\Omega^4_{27,\mathrm{exact}}$ has no eigenvalues $\kappa \mu$ such that 
\begin{align*}
 -\tfrac{5}{2} (\gamma - 1)\leq\mu\leq -1.
\end{align*}  
Moreover, $\ker D_\psi\hat{Q}$ on $\Omega^4_{27,\mathrm{exact}}$ consists of the $\kappa\mu$-eigenspaces for $d*$ with $\mu\in\{-1,-\frac{5}{2}(\gamma-1)\}$, where the case $\mu=-1$ corresponds to first-order deformations of the n$G_2$-structure.
\end{lemma}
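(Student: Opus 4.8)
The plan is to reduce $D_\psi\hat Q$ on $\Omega^4_{27,\mathrm{exact}}$ to a polynomial in the single self-adjoint operator $d*$, and then read off (negative-)definiteness from the sign of that polynomial on the spectrum. I would start from the linearisation \eqref{eq:Q.linearisation}. The entire purpose of the DeTurck modification \eqref{eq:hatQ} is that, since $V(\psi-\psi_0)$ vanishes to first order at the critical point $\psi=\psi_0$ and $V$ is linear, the added term $\cL_{V(\psi-\psi_0)}\psi$ linearises to exactly $\cL_{V(\dot\psi)}\psi$, cancelling the $-\cL_{V(\dot\psi)}\psi$ in \eqref{eq:Q.linearisation}. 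Restricting to $\dot\psi\in\Omega^4_{27,\mathrm{exact}}$ then kills $\pi_1(\dot\psi)$ and $\pi_7(\dot\psi)$, so only the $\lambda_{27}=3-5\gamma$ term survives and
\begin{equation*}
D_\psi\hat Q(\dot\psi)=-\Delta\dot\psi+\tfrac{\kappa}{2}(3-5\gamma)\,d{*}\dot\psi+\tfrac{5}{2}(1-\gamma)\kappa^2\dot\psi.
\end{equation*}

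The structural step is the identity $\Delta=(d*)^2$ on $\Omega^4_{27,\mathrm{exact}}$. Every element $\dot\psi=d\beta$ of this space is both exact and closed, so $\Delta\dot\psi=dd^*\dot\psi$. Grouping $(d*)^2=d\circ(*d*)$ and using $*d*=d^*$ on $4$-forms (valid since $n=7$), we get $(d*)^2\dot\psi=d\,d^*\dot\psi=\Delta\dot\psi$. Because $d*$ is self-adjoint on $\Omega^4_{\mathrm{exact}}$ and preserves $\Omega^4_{27,\mathrm{exact}}$ by Proposition \ref{prop:d*splitting}, the restriction of $D_\psi\hat Q$ is a genuine quadratic polynomial $p(d*)$ in the self-adjoint operator $d*$ whose square is the elliptic operator $\Delta$; hence $D_\psi\hat Q$ is itself self-adjoint with discrete real spectrum, and it is negative-definite precisely when all its eigenvalues are strictly negative.

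Diagonalising simultaneously with $d*$, an eigenform with $d{*}\dot\psi=\kappa\mu\dot\psi$ contributes the eigenvalue $\kappa^2P(\mu)$, where $P(\mu)=-\mu^2+\tfrac12(3-5\gamma)\mu+\tfrac52(1-\gamma)=-(\mu+1)\big(\mu+\tfrac52(\gamma-1)\big)$. Since $\gamma>2$ the two roots satisfy $-\tfrac52(\gamma-1)<-1$, and as $P$ is a downward parabola we have $P(\mu)<0$ exactly when $\mu\notin[-\tfrac52(\gamma-1),-1]$; thus $D_\psi\hat Q$ is negative-definite if and only if $d*$ has no eigenvalue $\kappa\mu$ with $-1\ge\mu\ge-\tfrac52(\gamma-1)$, as claimed. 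The kernel consists exactly of the eigenforms with $P(\mu)=0$, i.e.\ $\mu\in\{-1,-\tfrac52(\gamma-1)\}$; transporting through the isometry $*\colon\Omega^3_{27}\to\Omega^4_{27}$ these are the $*d$-eigenspaces on $\Omega^3_{27}$, and the $\mu=-1$ (equivalently $*d=-\kappa$) eigenspace is identified with the infinitesimal n$G_2$-deformations via the deformation theory of nearly parallel $G_2$-structures \cite{Alexandrov2012}.

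The step I expect to be the main obstacle is the identity $\Delta=(d*)^2$ on $\Omega^4_{27,\mathrm{exact}}$ — it is the only place where the interplay of both exactness \emph{and} closedness of these forms is essential, and it is the point most prone to sign and Hodge-star normalisation errors — together with correctly matching the eigenvalue normalisation in the geometric identification of the $\mu=-1$ kernel with n$G_2$-deformations. By contrast, the factorisation of $P$ and the resulting closed interval for $\mu$ are routine once the polynomial is in hand.
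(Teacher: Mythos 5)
Your proof is correct and follows essentially the same route as the paper: decompose $\Omega^4_{27,\mathrm{exact}}$ into eigenspaces of the self-adjoint operator $d*$ (via Proposition \ref{prop:d*splitting}), evaluate $D_\psi\hat{Q}$ on a $\kappa\mu$-eigenform to get $-\kappa^2(\mu+1)\big(\mu+\tfrac{5}{2}(\gamma-1)\big)$, and cite \cite{Alexandrov2012} to identify the $\mu=-1$ kernel with infinitesimal n$G_2$-deformations. The only difference is that you spell out the identity $\Delta=(d*)^2$ on exact $4$-forms and the cancellation of the DeTurck term, which the paper uses implicitly.
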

\begin{proof}  
Recall that $d*$ preserves $\Omega^4_{27,\mathrm{exact}}$ by Proposition \ref{prop:d*splitting} and so the space may be decomposed into eigenspaces for $d*$. 
If we set $\dot{\psi} = \eta$ where $d*\eta=\kappa\mu\eta$ then by Proposition \ref{prop:linearization} we have
\begin{align*}
\langle D_\psi \hat{Q} (\dot{\psi}), \dot{\psi} \rangle  = - \kappa^2 |\eta|^2 \left(\mu^2 - \tfrac{\mu}{2} \lambda_{27} - \tfrac{5}{2} (1- \gamma) \right) = - \kappa^2 |\eta|^2 ( \mu + 1) (\mu + \tfrac{5}{2} (\gamma - 1)) .
\end{align*}
The result follows from this formula together with \cite{Alexandrov2012}*{Theorem 3.5} to deduce the final remark in the statement.
\end{proof}

\noindent Lemma \ref{lemma:eigenvalue} together with Proposition \ref{prop:linearization} yields Theorem \ref{thm:lin.stab} in \S\ref{sec:intro}.

\begin{remark} Further computations using Proposition \ref{prop:Omega417} suggest that the eigenvalue bound in Lemma \ref{lemma:eigenvalue} is not sufficient to ensure $D_{\psi}\hat{Q}$ is negative definite on all of $\Omega^4_{\mathrm{exact}}$. In particular, it appears that one also requires a non-trivial lower bound on eigenvalues of the Laplacian acting on functions in terms of $\gamma$. 
\end{remark}

For the normalized $G_2$-Laplacian co-flow \eqref{eq:nLCF.2}, again fixing some reference co-closed $G_2$-structure defined by $\psi_0$ and considering $\psi\in[\psi_0]\in H^4(M)$, we may define an analogous operator $\hat{Q}_0$ to $\hat{Q}$ in \eqref{eq:hatQ} as follows:
\begin{equation}\label{eq:hatQ0}
    \hat{Q}_0(\psi):=Q_0(\psi)+\cL_{V(\psi-\psi_0)}\psi=\Delta\psi-\kappa^2\psi+\cL_{V(\psi-\psi_0)}\psi.
\end{equation}
In this way, the flow generated by $\hat{Q}_0$ is equivalent, modulo diffeomorphisms, to \eqref{eq:nLCF.2} but it is \emph{not} parabolic due to the presence of the $2d((d^*\rho_1)\varphi)$ term in \eqref{eq:Q0.linearisation}.  That said, we may still look at the question of stability in the direction of $\Omega^4_{27,\mathrm{exact}}$ for \eqref{eq:nLCF.2} to contrast it with Lemma \ref{lemma:eigenvalue}.

\begin{lemma}\label{lem:LCF.lin.stab}
    Let $(M,\varphi)$ be compact n$G_2$-structure with 
$d\varphi=\kappa\psi$ and let $\hat{Q}_0$ be given by \eqref{eq:hatQ0}. Then $D_\psi\hat{Q}_0$ is non-positive on $\Omega^4_{27,\mathrm{exact}}$.  Moreover, it is negative definite if and only if $-\kappa$ is not an eigenvalue of $d*$ on $\Omega^4_{27,\mathrm{exact}}$, which means that there are no first-order n$G_2$ deformations of $\psi$ in $\Omega^4_{27,\mathrm{exact}}$.
\end{lemma}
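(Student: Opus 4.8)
The plan is to specialise the linearisation \eqref{eq:Q0.linearisation} from Proposition \ref{prop:linearization} to the subspace $\Omega^4_{27,\mathrm{exact}}$ and then diagonalise it using the eigenspace decomposition of $d*$ furnished by Proposition \ref{prop:d*splitting}. Writing a general variation as $\dot\psi = 4\rho_0\psi + \rho_1\wedge\varphi - *\rho_3$, an element of $\Omega^4_{27,\mathrm{exact}}$ has $\rho_0 = 0$ and $\rho_1 = 0$, so that $\pi_1(\dot\psi) = \pi_7(\dot\psi) = 0$ and $\pi_{27}(\dot\psi) = \dot\psi$. Passing from $Q_0$ to $\hat Q_0$ removes the DeTurck term $\cL_{V(\dot\psi)}\psi$, while $\rho_1 = 0$ kills both the term $2d((d^*\rho_1)\varphi)$ and the $\pi_1,\pi_7$ contributions in \eqref{eq:Q0.linearisation}. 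This leaves the clean expression
\[
D_\psi\hat Q_0(\dot\psi) = -\Delta_\psi\dot\psi - 2\kappa\,d*\dot\psi - \kappa^2\dot\psi.
\]

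The next step is to test this against an eigenform. Since $d*$ is self-adjoint on $\Omega^4_{27,\mathrm{exact}}$ and preserves it (Proposition \ref{prop:d*splitting}), I may assume $d*\dot\psi = \kappa\mu\,\dot\psi$. The key computation is to evaluate $\Delta_\psi\dot\psi$ on such an eigenform: because $\dot\psi$ is exact it is closed, so $\Delta_\psi\dot\psi = dd^*\dot\psi$, and the identity $*d^* = d*$ on $4$-forms gives $d^*\dot\psi = *d*\dot\psi = \kappa\mu*\dot\psi$, whence $\Delta_\psi\dot\psi = \kappa\mu\,d*\dot\psi = \kappa^2\mu^2\dot\psi$. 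Substituting into the displayed formula shows that $\dot\psi$ is itself an eigenform of $D_\psi\hat Q_0$:
\[
D_\psi\hat Q_0(\dot\psi) = -\kappa^2(\mu^2 + 2\mu + 1)\dot\psi = -\kappa^2(\mu+1)^2\dot\psi.
\]

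The perfect square makes the conclusions immediate. The eigenvalue $-\kappa^2(\mu+1)^2$ is non-positive for every $\mu$ with $\kappa\mu\in\sigma(d*)$, so $D_\psi\hat Q_0$ is non-positive on $\Omega^4_{27,\mathrm{exact}}$; it has a zero eigenvalue precisely when $\mu = -1$, that is, when $-\kappa$ is an eigenvalue of $d*$ on this subspace, and is otherwise negative definite. To finish, I would invoke \cite{Alexandrov2012}*{Theorem 3.5}, exactly as in Lemma \ref{lemma:eigenvalue}, to identify the $\mu = -1$ eigenspace with the first-order n$G_2$ deformations of $\psi$ lying in $\Omega^4_{27,\mathrm{exact}}$.

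The computation itself is short, so the only delicate points are bookkeeping: correctly zeroing out the $\pi_1,\pi_7$ and $d^*\rho_1$ terms on restriction to $\Omega^4_{27,\mathrm{exact}}$, and handling the relation $d^* = *d*$ on $4$-forms so that $\Delta_\psi\dot\psi = \kappa^2\mu^2\dot\psi$ emerges and the three terms assemble into $-\kappa^2(\mu+1)^2$. I expect no substantive obstacle beyond these, since Propositions \ref{prop:linearization} and \ref{prop:d*splitting} do the heavy lifting and the spectral interpretation is imported from \cite{Alexandrov2012}.
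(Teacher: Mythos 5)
Your proposal is correct and follows essentially the same route as the paper: the paper's proof also tests $D_\psi\hat{Q}_0$ on a $\kappa\mu$-eigenform of $d*$ in $\Omega^4_{27,\mathrm{exact}}$ and obtains $\langle D_\psi\hat{Q}_0(\dot\psi),\dot\psi\rangle=-\kappa^2(\mu+1)^2|\dot\psi|^2$ from Proposition \ref{prop:linearization}, with the $\mu=-1$ kernel identified via \cite{Alexandrov2012}*{Theorem 3.5} as in Lemma \ref{lemma:eigenvalue}. You have simply written out explicitly the intermediate steps (vanishing of the $\rho_0,\rho_1$ terms, and $\Delta_\psi\dot\psi=\kappa^2\mu^2\dot\psi$ via $d^*=*d*$ on closed $4$-forms) that the paper leaves implicit.
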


\begin{proof}
    If we let $\dot{\psi}=\eta$ be a $\kappa\mu$-eigenform of $d*$ on $\Omega^4_{27,\mathrm{exact}}$ then Proposition \ref{prop:linearization} implies that
    \begin{equation*}
        \langle D_{\psi}\hat{Q}_0(\dot{\psi}),\dot{\psi}\rangle=-\kappa^2(\mu+1)^2|\eta|^2,
    \end{equation*}
    which yields the result.
\end{proof}

\begin{remark} In fact, using Proposition \ref{prop:Omega417}, one can compute that $D_{\psi}\hat{Q}_0(\dot{\psi})$ is non-positive on all of $\Omega^4_{\mathrm{exact}}$, with kernel given by the first-order n$G_2$ deformations in $\Omega^4_{27,\mathrm{exact}}$ as in Lemma \ref{lem:LCF.lin.stab}. In particular, rigid n$G_2$-structures are always linearly stable under the normalised $G_2$-Laplacian co-flow \eqref{eq:nLCF.2}, up to diffeomorphisms.  
\end{remark}

\subsection{Instability of the round sphere} Following standard notation, we define
the \emph{index} of a critical point $\psi$ for the flow \eqref{eq:flow} as the dimension of the space of unstable directions at $\psi$ (modulo diffeomorphisms).  By Proposition \ref{prop:linearization}, the index is the dimension of the subspace on which $D_{\psi}\hat{Q}$, where $\hat{Q}$ is given in \eqref{eq:hatQ}, is positive definite. By Lemma \ref{lemma:eigenvalue}, for an n$G_2$ critical point with $d\varphi=\kappa\psi$, we see that the index is at least the total multiplicity of eigenspaces for $d*$ on $\Omega^4_{27,\mathrm{exact}}$ with eigenvalues $\kappa\mu$ satisfying $-1>\mu> -\tfrac{5}{2} (\gamma - 1)$. 

Given these observations, we will find a lower bound on this index for the round sphere, fixing the normalization $\tau_0=\kappa=4$ so that the metric has constant unit sectional curvature. 

\begin{theorem} \label{thm:eigenvalues} Let $S^7$ be equipped with the standard $\Spin(7)$-invariant $nG_2$-structure $\varphi$ with $d\varphi=4\psi$, inducing the round constant curvature $1$ metric. The spectrum $\sigma(d*)$ of $d*$ on $\Omega^4_{27, \mathrm{exact}}$ is contained in $\lbrace\pm (4+ \ell) \mid \ell \in \N \rbrace$. The multiplicity $\dim(l)$ of the eigenvalue $- (4+ \ell)$ is bounded below by:
\begin{align*}
 \dim(l)\geq\tfrac{1}{120} \tfrac{l^2 + 5l -16}{(l+4)(l+6)}\tfrac{(l+7)!}{(l+1)!} 
\end{align*}
\end{theorem}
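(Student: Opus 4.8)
The plan is to trade the first-order operator $d*$ for the Hodge Laplacian and then exploit the large symmetry group of the round metric. First, recall from the proof of Proposition \ref{prop:d*splitting} that $d* = *d^*$ on $\Omega^4$, whence $(d*)^2 = d\,(*d*) = d\,d^*$; since $\Delta = dd^*$ on exact forms, we obtain $(d*)^2 = \Delta$ on $\Omega^4_{\mathrm{exact}}$. Consequently every eigenvalue of $d*$ is a signed square root of a Hodge--Laplacian eigenvalue, and any eigenform $\eta$ with $d*\eta = c\eta$, $c \neq 0$, is automatically exact because $\eta = c^{-1} d(*\eta)$. This reduces the problem to (i) computing the spectrum of $\Delta$ on the relevant piece of $\Omega^4_{27}$ and (ii) determining the sign of $d*$ on each eigenspace; the exactness constraint defining $\Omega^4_{27,\mathrm{exact}}$ then costs nothing for $c\neq 0$.

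For step (i) I would use that the standard n$G_2$-structure realises $S^7 = \Spin(7)/G_2$, so that $\Omega^4_{27} \cong *\,i_\varphi(\Sym^2_0)$ is the space of sections of the homogeneous bundle associated with the irreducible $27$-dimensional representation $V_{27} = \Sym^2_0(\R^7)$ of $G_2$. By Frobenius reciprocity (Peter--Weyl),
\[
L^2(\Omega^4_{27}) \cong \bigoplus_{\lambda} V_\lambda \otimes \Hom_{G_2}\!\big(V_\lambda|_{G_2}, V_{27}\big),
\]
the sum running over irreducible $\Spin(7)$-representations $V_\lambda$. Because the whole n$G_2$-structure is $\Spin(7)$-invariant, both $\Delta$ and $d*$ are $\Spin(7)$-equivariant and hence act by scalars on each isotypic summand (by Schur, up to a finite multiplicity matrix). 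The contributing weights $\lambda$ are exactly those for which the branching $\Spin(7)\downarrow G_2$ of $V_\lambda$ contains $V_{27}$; carrying out this branching identifies a family indexed by $\ell \in \N$ together with the corresponding highest weights.

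For the eigenvalue itself I would compute the scalar by which $\Delta$ acts on each $V_\lambda$ using the homogeneous Laplacian formula $\Delta = \mathrm{Cas}_{\Spin(7)} - \mathrm{Cas}_{G_2}$ (the Casimir of the group minus that of the isotropy representation on $V_{27}$), correcting by the Weitzenb\"ock term relating the connection and Hodge Laplacians. Matching the Casimir eigenvalues to the branching family should yield $\Delta = (4+\ell)^2$, and therefore $d* = \pm(4+\ell)$, giving the claimed containment $\sigma(d*)\subseteq\{\pm(4+\ell) : \ell\in\N\}$. The sign on each summand I would fix either by computing the first-order invariant operator $d*$ directly as an intertwiner on the homogeneous bundle, or by comparison with the related curl/Dirac operator whose eigenvalue gaps are controlled by Lemma \ref{lem:killingvf} and the results of \cite{Alexandrov2012}. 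Finally, the multiplicity $\dim(l)$ of $-(4+\ell)$ is $\dim V_\lambda$ times the branching multiplicity, and evaluating the Weyl dimension formula for the contributing $\Spin(7)$-representation(s) gives the stated lower bound; retaining only the dominant subfamily produces the inequality rather than an equality.

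The main obstacle is the representation theory in steps (ii)--(iii): explicitly working out the branching $\Spin(7)\downarrow G_2$ to list every $\lambda$ with $V_{27}\subset V_\lambda|_{G_2}$, assigning to each the correct Casimir value and the correct sign of $d*$ (so as to isolate the $-(4+\ell)$-eigenspace rather than its positive counterpart), and then evaluating the Weyl dimension formula to obtain the multiplicity bound. A secondary technical point is justifying the Weitzenb\"ock correction that relates the Hodge Laplacian on $\Omega^4_{27}$ to the homogeneous Casimir operator, and checking that the exactness restriction removes only the zero-eigenvalue (closed, $\Div h = 0$) part, which follows from the automatic exactness observation above together with Lemma \ref{lem:Omega427.closed}.
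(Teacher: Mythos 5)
Your opening reductions are sound: on $4$-forms in seven dimensions $d^* = *d*$, so $(d*)^2 = dd^* = \Delta$ on exact forms, and any eigenform with eigenvalue $c\neq 0$ is automatically exact since $\eta = c^{-1}d(*\eta)$. This plays the same role as the paper's use of the antipodal-map symmetry and the multiplicity-halving statement from \cite{Bar2019}. From there, however, your route is genuinely different, and it is precisely the route the paper states it is avoiding: you propose direct harmonic analysis on $S^7=\Spin(7)/G_2$ (Frobenius reciprocity for the bundle with fibre $V_{27}$, the branching $\Spin(7)\downarrow G_2$, Casimir eigenvalues, and computing $d*$ as an equivariant intertwiner), in the style of \cite{Alexandrov2012}. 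The paper instead works with the full isometry group $O(8)$: it quotes B\"ar's computation of the negative spectrum $-(4+l)$ of $d*$ on $\Omega^4(S^7)$ together with its multiplicity $d$, and then, using the invariant splitting of Proposition \ref{prop:d*splitting} and the identification of the spectrum of $d*$ on $d(\Omega^3_1\oplus\Omega^3_7)$ in Proposition \ref{prop:Omega417}, subtracts upper bounds $d_0,d_1$ (Weyl dimension formula for $O(8)$-representations) for the contributions of functions and curl-eigenforms. The inequality in the theorem is exactly the slack created by that over-subtraction; your approach, if completed, would compute the multiplicities on $\Omega^4_{27,\mathrm{exact}}$ exactly, which is more information but substantially more work.

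As written, though, the proposal has genuine gaps: every decisive step is deferred. (1) The branching law listing the $V_\lambda$ containing $V_{27}$ is not carried out. (2) The identification $\Delta=(4+\ell)^2$ on the relevant summands is stated as a hope (``should yield''), not established; yet both the spectrum containment and the sign analysis rest on it. (3) The formula $\Delta = \mathrm{Cas}_{\Spin(7)}-\mathrm{Cas}_{G_2}$ is not correct for the Hodge Laplacian: $\Spin(7)/G_2$ is isotropy irreducible but \emph{not} symmetric, so this identity holds only for the canonical-connection Laplacian, and passing from it to the Hodge Laplacian requires torsion and Weitzenb\"ock corrections — exactly the ``lengthy computation of the relevant first-order differential operators'' the paper's proof bypasses. (4) The sign of $d*$ on each summand, which is what isolates the $-(4+\ell)$-eigenspace the theorem bounds, is left to one of two unexecuted methods. (5) Most importantly, the explicit bound $\dim(l)\geq\tfrac{1}{120}\tfrac{l^2+5l-16}{(l+4)(l+6)}(l+7)!$ is never derived: ``retaining only the dominant subfamily'' is not tied to any computation, and nothing in the proposal shows that the branching multiplicities assemble into this expression. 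A smaller technical point: $d*$ does not preserve $\Omega^4_{27}$ — by \eqref{eq:d.sigma} it produces the $\Omega^4_7$ component $-\tfrac{1}{2}(\Div h)\wedge\varphi$ — so your Peter--Weyl decomposition must be set up on the divergence-free subspace, where it does act by Lemma \ref{lem:Omega427.closed} and exactness; this is fixable but needs to be said. In sum, this is a plausible programme rather than a proof: the framework is viable, but none of the computations that would yield the stated spectrum and multiplicity bound are present.
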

\begin{proof}
Calculating the dimension exactly would involve working with the automorphism group $\Spin(7)$ of $\varphi$, and a lengthy computation of the relevant first-order differential operators. For simplicity, we will compute eigenvalues using the representation theory of the full isometry group $O(8)$, as this has already appeared in the literature e.g.~\cites{Bar2019,Folland1989}. For the aid of the reader, we recall the details of this here. 

Firstly, observe that the spectrum of $d*$ on $\Omega^4(S^7)$ must be symmetric about $0$, since the orientation-reversing isometry given by rotation by angle $\pi$ around a  line through the origin in $\mathbb{R}^8$ exchanges the eigenspaces of $d*$ with opposite sign, cf.~\cite{Bar2019}*{Theorem 3.3}. The same argument also shows that the multiplicity of each non-zero eigenvalue $\lambda$ is exactly half of the multiplicity of the eigenvalue $\lambda^2$ of the Laplacian on $\Omega^4_{\mathrm{exact}}(S^7)$. 

Using this fact, \cite{Bar2019}*{Theorem 5.2} shows that the negative part of the spectrum on $\Omega^4$  consists of eigenvalues of the form $- \left(4 + l\right)$ for non-negative integers  $l\in \N$, with multiplicity $d$ given by
\begin{align}\label{eq:dim.calc.1}
 d=\tfrac{(l+7)!}{(3!)^2 (l+4) l!}   
\end{align}
However, these isometries do not preserve the decomposition \eqref{eq:g2forms.star}. Instead, using the splitting in Proposition \ref{prop:d*splitting}, to find $\dim(l)$, we have to subtract from $d$ the dimensions of the spaces 
\begin{align}\label{eq:dim.calc.2}
\lbrace f \in \Omega^0 \mid \Delta f = (l+2)(l+8)f \rbrace\quad\text{and}\quad \lbrace X \in \Omega^1 \mid \curl{X} = - ( l+ 2) X \rbrace  
\end{align}
coming from eigenvalues of $d*$ in $d(\Omega^3_1\oplus\Omega^3_7)$ by Proposition \ref{prop:Omega417}. 

The first of the spaces in \eqref{eq:dim.calc.2} is spanned by the restriction to $S^7$ of homogeneous harmonic polynomials in $\R^8$ of degree $l+2$. Its dimension $d_0$ is given by the Weyl character formula \cite{Fulton1991}*{Equation 24.41}:
\begin{align}\label{eq:dim.calc.3}
 d_0 = \tfrac{2}{6!}\tfrac{(l+7)! (l+5)}{(l+2)!}   
\end{align}

We obtain an estimate for the dimension of the second space in \eqref{eq:dim.calc.2} using \eqref{eq:curl}, by observing that it is contained in the $(l+2)(l+6)$-eigenspace for the Laplacian on co-closed one-forms. This latter space is the irreducible $O(8)$-representation of highest weight contained in $\Sym^l(V) \otimes \Lambda^2(V)$, where $V$ is the standard representation, see \cite{Folland1989}*{Theorem C}\footnote{In the notation of \cite{Fulton1991}, this is the irreducible $SO(8)$-representation of highest weight $(l+1)L_1 + L_2$.}. The dimension $d_1$ of this representation is given by the Weyl character formula as before:
\begin{align}\label{eq:dim.calc.4}
 d_1 = \tfrac{2}{5!}\tfrac{(l+7)! (l+4)}{l!(l+2)(l+6)}   
\end{align}

Since $\dim(l)\geq d - d_0 - d_1$ by Proposition \ref{prop:d*splitting}, we have the estimate as claimed by combining \eqref{eq:dim.calc.1}, \eqref{eq:dim.calc.3} and \eqref{eq:dim.calc.4}. 
\end{proof}

We now return to the linearised modified co-flow operator $D_\psi Q$ of \eqref{eq:flow} of the standard homogeneous n$G_2$ round sphere with $\kappa=4$. By Lemma \ref{lemma:eigenvalue}, $D_\psi Q$ is positive definite on all the eigenspaces of $d*$ in $\Omega^4_{27,\mathrm{exact}}$ with eigenvalues between $-10$ and $-4$ ($-10$ included), for any $\gamma>2$. Thus, Theorem \ref{thm:eigenvalues} gives a lower bound of $7047$ on the index of $D_\psi Q$ by taking the sum of $\dim(l)$ for $3\leq l \leq 6$\footnote{Note that the estimate in Theorem \ref{thm:eigenvalues} is vacuous when $l =1,2$.}. This proves Theorem \ref{thm:index} in \S\ref{sec:intro} and contrasts dramatically with the situation for the normalized $G_2$-Laplacian co-flow \eqref{eq:nLCF.2}, where elements in $\Omega^4_{27,\mathrm{exact}}$ cannot contribute to the index for any n$G_2$-structure by Lemma \ref{lem:LCF.lin.stab}.


\section{3-Sasakian case}\label{sec:3Sak}
In this section, we consider the normalized modified $G_2$-Laplacian co-flow \eqref{eq:flow} for a natural family of co-closed $G_2$-structures on 7-manifolds admitting a 3-Sasakian metric.  We discover that all of the n$G_2$-structures which arise as critical points for the flow are unstable within this family.  Moreover, we can identify the unstable directions explicitly, which  shows that the full 3-parameter families are required to see the instabilities.  We also briefly compare this with the normalized $G_2$-Laplacian co-flow \eqref{eq:nLCF.2}, where we instead obtain long-time existence and convergence to the n$G_2$ critical point.

\subsection{\texorpdfstring{$G_2$}{G2}-structures} We start by briefly recalling the set-up (cf.~\cite{Agricola2010}).  If $M^7$ is orientable and admits a 3-Sasakian metric $g$, there is a locally free action of $SU(2)$ on $M$ whose leaf space is a 4-dimensional orientable orbifold $N$. and there is an Einstein metric $h$ on $N$ so that the quotient map $\pi:(M,g)\to (N,h)$ is a Riemannian submersion.  Moreover, we have an orthogonal decomposition $TM=\mathcal{V}\oplus\mathcal{H}$ where $\mathcal{V},\mathcal{H}$ are the vertical and horizontal distributions with respect to $\pi$. 

There is a global orthonormal basis $\{\eta_1,\eta_2,\eta_3\}$ of $\mathcal{V}^*$ and an orthogonal basis $\{\omega_1,\omega_2,\omega_3\}$ of $\Lambda^2_+\mathcal{H}^*$ such that
\begin{equation*}
    \omega_i\wedge\omega_j=2\delta_{ij}\vol_{N},
\end{equation*}
where $\vol_N$ is the volume form on $N$ pulled back to $\mathcal{H}$, and $\{\eta_1,\eta_2,\eta_3\}$  satisfy the structure equations for $(i,j,k)$ cyclic permutations of $(1,2,3)$:
\begin{align}\label{eq:3Sak.structure}
    d\eta_i&=-2\eta_j\wedge\eta_k-2\omega_i.
\end{align}
Given this data, we may then define two distinct families of co-closed $G_2$-structures $(\varphi_+, \psi_+)$, $(\varphi_-, \psi_-)$ on $M$, depending on constants $a,b,c\in\R^+$ as follows:
\begin{gather} \label{eq:3Sak.G2str.psi}
\begin{aligned}
\varphi_\pm&= \pm a^2b\eta_1\wedge\eta_2\wedge\eta_3-ac^2(\eta_1\wedge\omega_1+\eta_2\wedge\omega_2) \mp bc^2\eta_3\wedge\omega_3;\\
\psi_\pm&=c^4\vol_N \mp abc^2(\eta_2\wedge\eta_3\wedge\omega_1+\eta_3\wedge\eta_1\wedge\omega_2)-a^2c^2\eta_1\wedge\eta_2\wedge\omega_3.
\end{aligned}
\end{gather}
\begin{remark}
Of course, one can easily generalize these two 3-parameter families into 4-parameter families cf.~\cite{KennonLotay2023}*{Lemma 2.5}, but we shall not require this here. 
\end{remark}
\begin{remark}
The $G_2$-structures corresponding to $\psi_+$ and $\psi_-$ cannot be homotopic through $G_2$-structures by \cite{Crowley2015a}*{Examples 1.14 and 1.15}    
\end{remark}
The metrics $g_\pm$ and volume forms $\vol_{\pm}$ associated to the  $G_2$-structures in \eqref{eq:3Sak.G2str.psi} are given by 
\begin{equation*}
    g_+ = g_- =a^2(\eta_1^2+\eta_2^2)+b^2\eta_3^2+c^2g_N \quad\text{and}\quad \vol_{\pm}=\pm  a^2bc^4\eta_1\wedge\eta_2\wedge\eta_3\wedge\vol_N,
\end{equation*}
where $g_N$ is the pullback of the orbifold metric on $N$ to $\mathcal{H}$.
\subsection{Critical points} From \eqref{eq:3Sak.structure}--\eqref{eq:3Sak.G2str.psi} one can easily compute the following (cf.~\cite{KennonLotay2023}*{Lemma 2.14 and Lemma 3.1}):
\begin{align}
    d\varphi_{\pm}&=4(2a + \epsilon b)c^2\vol_N - 2 \epsilon b(a^2+c^2)(\eta_2\wedge\eta_3\wedge\omega_1+\eta_3\wedge\eta_1\wedge\omega_2)\nonumber\\
    &\quad - 2\epsilon (a^2b+2\epsilon ac^2-bc^2)\eta_1\wedge\eta_2\wedge\omega_3;\label{eq:3Sak.dphi}\\
    \tau_0&=\frac{4}{7}\frac{4a(a^2+c^2)+\epsilon b(2a^2-c^2)}{a^2c^2};\label{eq:3Sak.tau0}\\
    \Delta \psi_{\pm}
    &=8\left(2a^2+b^2+2c^2+\frac{2\epsilon bc^2}{a}-\frac{b^2c^2}{a^2}\right)\vol_N\nonumber\\
&\quad    -4\left(b^2+\frac{4\epsilon a^3b}{c^2}+\frac{2a^2b^2}{c^2}+\frac{2\epsilon bc^2}{a}-\frac{b^2c^2}{a^2}\right)(\eta_2\wedge\eta_3\wedge\omega_1+\eta_3\wedge\eta_1\wedge\omega_2)\nonumber\\
&\quad -4\left(2a^2-b^2+2c^2+\frac{4\epsilon a^3b}{c^2} +\frac{2a^2b^2}{c^2}-\frac{2\epsilon bc^2}{a} +\frac{b^2c^2}{a^2}\right)\eta_1\wedge\eta_2\wedge\omega_3,\label{eq:3Sak.Delta}
\end{align}
where $\epsilon = \pm 1$ respectively. As a direct consequence, we may now characterize the n$G_2$ critical points for the normalized Laplacian coflow and its modification within the family of $G_2$-structures above.

\begin{lemma}\label{lem:3Sak.nG2} 
    The ansatz \eqref{eq:3Sak.G2str.psi} satisfies $d\varphi=\kappa \psi$ for $\kappa>0$, if and only if
  \begin{equation*}
      \psi=\psi_{+}, \, a=b=\frac{1}{\sqrt 5}c= \frac{12}{5\kappa} \quad or \quad  \psi=\psi_{-}, \, a=b=c=\frac{4}{\kappa}
  \end{equation*}
  We denote the resulting $G_2$-structure by $\psi_{\pm}^\kappa$.
\end{lemma}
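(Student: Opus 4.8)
The plan is to prove the lemma by a direct comparison of the two explicitly known 4-forms $d\varphi_\pm$ and $\kappa\psi_\pm$. Both \eqref{eq:3Sak.dphi} and \eqref{eq:3Sak.G2str.psi} are expressed in the same basis of three linearly independent 4-forms: $\vol_N$, the combination $\eta_2\wedge\eta_3\wedge\omega_1+\eta_3\wedge\eta_1\wedge\omega_2$, and $\eta_1\wedge\eta_2\wedge\omega_3$. Hence the equation $d\varphi_\pm=\kappa\psi_\pm$ is equivalent to three scalar equations obtained by equating the coefficients of these three 4-forms, where $\epsilon=\pm 1$ and the sign $\mp$ appearing in $\psi_\pm$ equals $-\epsilon$.

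First I would write down these three equations. The $\vol_N$ coefficient gives $4(2a+\epsilon b)=\kappa c^2$; the coefficient of $\eta_2\wedge\eta_3\wedge\omega_1+\eta_3\wedge\eta_1\wedge\omega_2$ gives, after cancelling the common factor $-\epsilon b$, the relation $2(a^2+c^2)=\kappa ac^2$; and the coefficient of $\eta_1\wedge\eta_2\wedge\omega_3$ gives $2\epsilon b(a^2-c^2)+4ac^2=\kappa a^2c^2$.

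The next step is to eliminate $\kappa$. Substituting $\kappa c^2=8a+4\epsilon b$ from the first equation into the second yields $c^2=3a^2+2\epsilon ab$, and inserting both of these into the third equation collapses, after cancelling the common factor $4a$, to $a^2=b^2$, hence $a=b$ since $a,b>0$. Feeding $a=b$ back into $c^2=3a^2+2\epsilon ab=(3+2\epsilon)a^2$ then gives $c=\sqrt5\,a$ when $\epsilon=+1$ and $c=a$ when $\epsilon=-1$; the first equation determines $\kappa=12/(5a)$ in the $\psi_+$ case and $\kappa=4/a$ in the $\psi_-$ case, which I invert to recover the stated values $a=b=c/\sqrt5=12/(5\kappa)$ and $a=b=c=4/\kappa$ respectively. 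I would note in passing that both solutions automatically give $\kappa>0$ for positive $a,b$, so the positivity hypothesis on $\kappa$ imposes no further constraint.

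This argument is entirely elementary, so there is no genuine analytic obstacle; the only point requiring care is the sign bookkeeping, namely tracking the interaction between the factor $\mp=-\epsilon$ in the definition of $\psi_\pm$ and the factors $-2\epsilon$ throughout $d\varphi_\pm$, and checking that the quantities cancelled (such as $b$ and $a$) are genuinely nonzero, which holds since $a,b,c\in\R^+$. An essentially equivalent alternative would be to impose the nearly-$G_2$ condition as $\tau_3=0$ together with $\tau_0=\kappa$ via the torsion formula \eqref{eq:3Sak.tau0}, but the direct coefficient comparison above is the most economical.
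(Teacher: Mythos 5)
Your proof is correct and is exactly the argument the paper leaves implicit: its proof of Lemma \ref{lem:3Sak.nG2} simply states that the result ``follows immediately'' from \eqref{eq:3Sak.G2str.psi}, \eqref{eq:3Sak.tau0} and \eqref{eq:3Sak.dphi}, i.e.~from the coefficient comparison you carry out in detail. Your sign bookkeeping ($\mp=-\epsilon$) and the elimination leading to $a^2=b^2$, $c^2=(3+2\epsilon)a^2$ check out, so this is a complete write-up of the paper's intended computation.
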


\begin{proof}
    This follows immediately from \eqref{eq:3Sak.G2str.psi}, \eqref{eq:3Sak.tau0} and \eqref{eq:3Sak.dphi}.
\end{proof}

\subsection{Normalized \texorpdfstring{$G_2$}{G2}-Laplacian co-flow} We first consider the normalized $G_2$-Laplacian co-flow as in Definition \ref{dfn:nLCF}: 
\begin{equation}\label{eq:3Sak.nLCF}
    \frac{\partial\psi}{\partial t}=\Delta_{\psi}\psi-\kappa^2\psi.
\end{equation}
   We first see from \eqref{eq:3Sak.Delta} that the ansatz  \eqref{eq:3Sak.G2str.psi} is preserved along the flow \eqref{eq:3Sak.nLCF}. 
Indeed, we have that \eqref{eq:3Sak.nLCF} is equivalent to the following ODE system   for functions $a,b,c$ of $t$ defining $G_2$-structures as in \eqref{eq:3Sak.G2str.psi}:
\begin{gather} \label{eq:3Sak.nLCF.1}
\begin{aligned}
\frac{d}{d t}(c^4) & = 8\left(2a^2 +b^2+ 2c^2 +\frac{2\epsilon bc^2}{a} - \frac{b^2c^2}{a^2}\right)-\kappa^2c^4; \\
\frac{d}{d t}(a^2c^2)& = 4\left(2a^2-b^2+2c^2+\frac{4\epsilon a^3b}{c^2}+\frac{2a^2b^2}{c^2}-\frac{2\epsilon bc^2}{a}   + \frac{b^2c^2}{a^2}\right)-\kappa^2a^2c^2; \\  
\frac{d }{d t}(abc^2) &= 4\left(\epsilon b^2+\frac{4a^3b}{c^2} + \frac{2\epsilon a^2b^2}{c^2} + \frac{2 bc^2}{a} -  \frac{\epsilon b^2c^2}{a^2}\right)-\kappa^2abc^2,
\end{aligned}
\end{gather}
where $\epsilon = \pm 1$. We then have the following result, which follows almost immediately from \cite{KennonLotay2023}*{Theorem 1.1}, and proves Theorem \ref{thm:3Sak.stab} in \S\ref{sec:intro}.

\begin{theorem}
Given $\kappa>0$, and initial condition $\psi(0) = \psi_{\pm}$ as in \eqref{eq:3Sak.G2str.psi}, the flow \eqref{eq:3Sak.nLCF} exists for all forward time, and converges to $\psi_{\pm}^\kappa$.  In particular,  $\psi_{\pm}^\kappa$ is stable for \eqref{eq:3Sak.nLCF} in the family \eqref{eq:3Sak.G2str.psi}.
\end{theorem}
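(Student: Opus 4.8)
The plan is to relate the normalized co-flow \eqref{eq:3Sak.nLCF} to the \emph{unnormalized} $G_2$-Laplacian co-flow $\partial_s\bar\psi=\Delta_{\bar\psi}\bar\psi$ studied in \cite{KennonLotay2023} by an explicit time-dependent rescaling, and then transfer the long-time existence and convergence results. First I would note, using \eqref{eq:3Sak.Delta}, that the ansatz \eqref{eq:3Sak.G2str.psi} is preserved by both flows, so each reduces to an ODE system in $(a,b,c)$: the normalized system is exactly \eqref{eq:3Sak.nLCF.1}, and the unnormalized one is obtained by deleting the three terms $-\kappa^2 c^4$, $-\kappa^2a^2c^2$, $-\kappa^2abc^2$. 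The crucial structural point is that in \eqref{eq:3Sak.nLCF.1} the left-hand quantities $c^4$, $a^2c^2$, $abc^2$ are homogeneous of degree $4$ in $(a,b,c)$, the geometric right-hand sides are homogeneous of degree $2$, and the normalization terms are $-\kappa^2$ times the degree-$4$ quantities; this is precisely the scaling behaviour $\varphi\mapsto\mu^3\varphi$, $\psi\mapsto\mu^4\psi$, $\Delta_\psi\psi\mapsto\mu^2\Delta_\psi\psi$ recorded in \eqref{eq:scaling}--\eqref{eq:scaling.3}.

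Next I would make the correspondence explicit. If $(\bar a,\bar b,\bar c)(s)$ solves the unnormalized system, set
\begin{equation*}
(a,b,c)(t)=\lambda(t)\,(\bar a,\bar b,\bar c)(s(t)),\qquad \dot\lambda=-\tfrac{\kappa^2}{4}\lambda,\qquad \dot s=\lambda^{-2},
\end{equation*}
so that $\lambda(t)=e^{-\kappa^2 t/4}$ and $s(t)=\tfrac{2}{\kappa^2}(e^{\kappa^2 t/2}-1)$. Matching the degree-$4$ pieces (which carry the factor $4\lambda^3\dot\lambda$ against $-\kappa^2\lambda^4$) and the degree-$2$ pieces (which carry $\lambda^4\dot s$ against $\lambda^2$) shows that $(a,b,c)(t)$ then solves \eqref{eq:3Sak.nLCF.1}, and conversely. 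Since $s(t)\to\infty$ as $t\to\infty$, the long-time existence of the unnormalized flow provided by \cite{KennonLotay2023}*{Theorem 1.1} immediately yields long-time existence of \eqref{eq:3Sak.nLCF} for every initial condition in the family.

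For convergence I would use that \cite{KennonLotay2023}*{Theorem 1.1} gives convergence of the unnormalized flow modulo rescaling to the n$G_2$ structure, say $(\bar a,\bar b,\bar c)(s)=\rho(s)\,(a_*,b_*,c_*)+o(\rho(s))$ where $(a_*,b_*,c_*)$ spans the n$G_2$ ray and $\rho(s)\to\infty$. A short computation along this ray (using $\Delta_{\bar\psi}\bar\psi=\bar\tau_0^{\,2}\bar\psi$ and $\bar\tau_0\propto\rho^{-1}$) shows $\rho(s)^2$ grows linearly in $s$, whence $\rho(s(t))\sim\mathrm{const}\cdot e^{\kappa^2 t/4}$; combined with $\lambda(t)=e^{-\kappa^2 t/4}$ this gives $\lambda(t)\rho(s(t))\to\tau_{0*}/\kappa$, so $(a,b,c)(t)$ converges to the n$G_2$ structure rescaled to have $\tau_0=\kappa$. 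By Lemma \ref{lem:3Sak.nG2} this limit is exactly $\psi_{\pm}^{\kappa}$, proving stability within the family.

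The main obstacle is the last step: verifying that the exponential damping $\lambda(t)=e^{-\kappa^2 t/4}$ exactly balances the expansion of the unnormalized flow — polynomial in $s$, hence exponential in $t$ after reparametrization — so that the rescaled limit lands on the fixed scale $\tau_0=\kappa$ rather than degenerating or diverging. This is precisely the point at which the \emph{normalization} enters: it pins the scale independent of the initial data, upgrading the ``convergence modulo scaling'' of \cite{KennonLotay2023} to genuine convergence for \eqref{eq:3Sak.nLCF} via the expansion rate $\rho\sim s^{1/2}$. Everything else is the routine bookkeeping of the degree-$2$/degree-$4$ homogeneity already visible in \eqref{eq:3Sak.nLCF.1}.
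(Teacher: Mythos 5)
Your conjugation of the two flows is correct: writing $(a,b,c)(t)=\lambda(t)(\bar a,\bar b,\bar c)(s(t))$, the degree-$4$/degree-$2$ homogeneity of \eqref{eq:3Sak.nLCF.1} forces $4\dot\lambda=-\kappa^2\lambda$ and $\dot s=\lambda^{-2}$, so $\lambda=e^{-\kappa^2 t/4}$ and $s(t)=\tfrac{2}{\kappa^2}(e^{\kappa^2 t/2}-1)$, and since $s(t)$ is finite for every finite $t$, long-time existence does transfer from \cite{KennonLotay2023}*{Theorem 1.1}. This is a genuinely different packaging from the paper's proof: there one never conjugates the flows, but instead passes to the scale-invariant variables $X=a^2/c^2$, $Y=ab/c^2$ of \eqref{eq:3Sak.XY} with the reparametrization $ds/dt=1/c^2$ of \eqref{eq:3Sak.s}, observes that the $\kappa^2$-terms cancel so that $(X,Y)$ satisfies exactly the unnormalized system \eqref{eq:3Sak.Xflow} of \cite{KennonLotay2023}*{Lemma 3.2}, quotes \cite{KennonLotay2023}*{Theorem 3.8} for convergence of $(X,Y)$, and then controls the scale by the single scalar equation \eqref{eq:3Sak.cf.c}: a linear ODE for $c^2$ with damping term $-\tfrac{\kappa^2}{2}c^2$ and forcing $4F(X,Y)$ (where $F$ denotes the bracket in \eqref{eq:3Sak.cf.c}) converging to a constant, whence $c^2\to 8F(X_*,Y_*)/\kappa^2$ with no further input.

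The gap in your argument is the convergence step. You obtain the rate $(\rho^2)'\to\tau_{0*}^2/2$ from a computation ``along the ray,'' i.e.\ for the exact self-similar solution; but your flow line is only asymptotic to the ray after rescaling, and a black-box statement that the unnormalized flow ``converges modulo rescaling'' does not determine the growth rate of the rescaling factor, let alone its exact constant. That constant is not cosmetic: if $\rho(s)^2/s$ tended to any other value, then $\lambda(t)\rho(s(t))$ would converge to a different multiple of $(a_*,b_*,c_*)$ (or to $0$ or $\infty$), and the limit would not be $\psi^\kappa_\pm$. To close the gap you must reopen \cite{KennonLotay2023}: Theorem 3.8 there gives $(\bar X,\bar Y)\to(X_*,Y_*)$ for the unnormalized flow, and then $\tfrac{d}{ds}\bar c^{\,2}=4F(\bar X,\bar Y)\to 4F(X_*,Y_*)$ integrates to $\bar c^{\,2}(s)/s\to 4F(X_*,Y_*)$; with this constant your bookkeeping gives $c^2(t)=\lambda^2\bar c^{\,2}(s(t))\to 8F(X_*,Y_*)/\kappa^2$, which equals $16/\kappa^2$ for $\psi_-$ and $144/(5\kappa^2)$ for $\psi_+$, matching Lemma \ref{lem:3Sak.nG2}. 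Note that this repaired step is precisely the paper's argument transplanted to the unnormalized flow; the paper's choice to keep the scale variable inside the normalized flow is what makes its convergence argument shorter, since the damping term does automatically what your asymptotic matching has to do by hand.
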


\begin{proof}
Note that if $c(0)>0$ at the initial time $t=0$, then \eqref{eq:3Sak.nLCF.1} implies $c$ is positive for all forward times $t\geq 0$ for which $(a,b,c)$ exist. Thus, we can re-parameterize the system  \eqref{eq:3Sak.nLCF.1} using a new variable $s$ such that
    \begin{equation}\label{eq:3Sak.s}
        \frac{d s}{d t}=\frac{1}{c^2}
    \end{equation}
Furthermore, since $c>0$, if we define the scale-invariant quantities: 
    \begin{equation}\label{eq:3Sak.XY}
        X=\frac{a^2}{c^2}\quad\text{and}\quad Y=\frac{ab}{c^2},
    \end{equation}
    then \eqref{eq:3Sak.nLCF.1} gives that $X$ and $Y$ satisfy
    \begin{gather}
    \begin{aligned} \label{eq:3Sak.Xflow}
        \frac{d X}{d s}&=\frac{4}{X^2}\left((X+1)Y^2 + 2\epsilon(2X^2-2X-1)XY-2X^2(2X-1)(X+1)\right)\\
        \frac{d Y}{d s}&=\frac{4Y}{X^2}\left(2(1-X)Y^2+\epsilon(2X^2-3X-1)Y+2X(1-2X)\right).
    \end{aligned}
    \end{gather}
 These are the same equations as in \cite{KennonLotay2023}*{Lemma 3.2} for the (un-normalized) $G_2$-Laplacian co-flow, i.e.~setting $\kappa=0$ in \eqref{eq:3Sak.nLCF}. The quadrant $X>0$, $Y>0$ is preserved under this system, and the non-degenerate critical points are given by:
 \begin{align*}
    X=Y= \tfrac{1}{5}, \,\text{if }\epsilon=+1& &\text{and}& &X=Y=1, \, \text{if }\epsilon=-1
 \end{align*}
 corresponding to the n$G_2$-structures in Lemma \ref{lem:3Sak.nG2}, cf.~\cite{KennonLotay2023}*{Lemma 3.3}. 
 
 The proof of \cite{KennonLotay2023}*{Theorem 3.8} shows that given any non-degenerate initial value for $a,b,c$, which then determines an initial value $X>0$, $Y>0$, the flow \eqref{eq:3Sak.Xflow} will converge to the corresponding critical point. 
 
 We now observe from \eqref{eq:3Sak.nLCF.1} that
 \begin{equation} 
     \frac{d}{d t}(c^2)=4\left(2X+\frac{Y^2}{X}+2+2\epsilon\frac{Y}{X}-\frac{Y^2}{X^2}\right)-\frac{\kappa^2}{2}c^2.\label{eq:3Sak.cf.c}
 \end{equation}
Moreover, \cite{KennonLotay2023}*{Theorem 3.8} shows that $X,Y$ are bounded and bounded away from zero, so it follows from \eqref{eq:3Sak.cf.c} that $c$ must also remain bounded. Since $c>0$, we deduce that the variable $s$ is defined for all  time. We finally deduce from the convergence of \eqref{eq:3Sak.Xflow} the claimed convergence of the normalized Laplacian co-flow.
\end{proof}

\subsection{Normalized modified \texorpdfstring{$G_2$}{G2}-Laplacian co-flow} 
We now move on to the normalized modified $G_2$-Laplacian co-flow as in Definition \ref{dfn:nMLCF}:
\begin{equation}\label{eq:3Sak.nMLCF}
    \frac{\partial\psi}{\partial t}=\Delta_{\psi}\psi+\frac{1}{2}d\big((5\gamma\kappa-7\tau_0)\varphi\big)+\frac{5}{2}(1-\gamma)\kappa^2\psi
\end{equation}
for fixed $\kappa>0$ and $\gamma>2$.  We recall the following from Lemma \ref{lem:scaling}.

\begin{lemma} \label{lem:scalingODE} The n$G_2$-structures $\psi^\kappa_{\pm}$, $\psi_\pm^{\kappa (\gamma-1)}$ are both critical points for \eqref{eq:3Sak.nMLCF}. However, $\psi^\kappa_{\pm}$ is stable under rescaling the $G_2$-structure, whereas $\psi_\pm^{\kappa (\gamma-1)}$ is unstable.
\end{lemma}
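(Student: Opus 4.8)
The plan is to observe that this statement is a direct specialisation of Lemma \ref{lem:scaling} to the 3-Sasakian family \eqref{eq:3Sak.G2str.psi}, so that the only real work is to check that the rescaling ansatz used there stays inside this family. First I would note that \eqref{eq:3Sak.nMLCF} is exactly the flow \eqref{eq:flow}, so by Lemma \ref{lem:nG2.nMLCF} its n$G_2$ critical points are precisely those with $\tau_0=\kappa$ or $\tau_0=(\gamma-1)\kappa$; applying Lemma \ref{lem:3Sak.nG2} with these two values of $\tau_0$ identifies them within the family as $\psi_\pm^\kappa$ and $\psi_\pm^{\kappa(\gamma-1)}$, which already establishes that both are critical points.

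The key step is to realise the rescaling $\varphi=\mu^3\varphi_0$, $\psi=\mu^4\psi_0$ of Lemma \ref{lem:scaling} inside the family via the uniform scaling $(a,b,c)\mapsto(\mu a,\mu b,\mu c)$. Inspecting \eqref{eq:3Sak.G2str.psi}, every monomial of $\varphi_\pm$ is homogeneous of degree $3$ and every monomial of $\psi_\pm$ of degree $4$ in $(a,b,c)$, so this scaling sends $\varphi_\pm\mapsto\mu^3\varphi_\pm$ and $\psi_\pm\mapsto\mu^4\psi_\pm$ while preserving the ansatz. Since $d\varphi=\tau_0\psi$ then forces $\tau_0\mapsto\mu^{-1}\tau_0$ under this scaling, the structure $\psi_\pm^{\kappa(\gamma-1)}$ is precisely the image of $\psi_\pm^\kappa$ under the scaling with $\mu=(\gamma-1)^{-1}$, matching the second critical point of the scaling ODE.

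With this identification, the one-parameter rescaling family through $\psi_\pm^\kappa$ is exactly the curve along which Lemma \ref{lem:scaling} reduces \eqref{eq:flow} to the ODE \eqref{eq:scalingODE}, whose critical points $\mu=1$ and $\mu=(\gamma-1)^{-1}$ correspond to $\psi_\pm^\kappa$ and $\psi_\pm^{\kappa(\gamma-1)}$ respectively. Since we assume $\gamma>2$, Lemma \ref{lem:scaling} gives that $\mu=1$ is stable and $\mu=(\gamma-1)^{-1}$ is unstable, which is the assertion. I expect no substantive obstacle here; the one point requiring care is the homogeneity bookkeeping above, which guarantees that rescaling does not leave the family \eqref{eq:3Sak.G2str.psi}, so that Lemma \ref{lem:scaling} applies verbatim and the argument is uniform in the sign $\epsilon=\pm1$.
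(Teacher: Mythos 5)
Your proposal is correct and follows exactly the paper's route: the paper proves this lemma simply by invoking Lemma \ref{lem:scaling}, together with the identifications from Lemmas \ref{lem:nG2.nMLCF} and \ref{lem:3Sak.nG2}, which is precisely your argument. Your homogeneity check that the rescaling $(a,b,c)\mapsto(\mu a,\mu b,\mu c)$ realises $\varphi\mapsto\mu^3\varphi$, $\psi\mapsto\mu^4\psi$ within the family \eqref{eq:3Sak.G2str.psi} is a detail the paper leaves implicit, and it is verified correctly.
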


Combining the ansatz \eqref{eq:3Sak.G2str.psi} with \eqref{eq:3Sak.dphi}--\eqref{eq:3Sak.Delta} we see that the flow \eqref{eq:3Sak.nMLCF} is equivalent to the following ODE system:
\begin{gather} \label{eq:3Sak.nMLCF.0}
\begin{aligned}
    \frac{d}{d t}(c^4)&=-48a^2-8b^2-48c^2 +10\epsilon\gamma\kappa bc^2+20\gamma\kappa ac^2-64\epsilon ab +\frac{5}{2}(1-\gamma)\kappa^2c^4;\\
    \frac{d}{dt}(abc^2)&=-\frac{8bc^2}{a}+5\gamma\kappa a^2b+5\gamma\kappa bc^2-32 ab+\frac{5}{2}(1-\gamma)\kappa^2 abc^2;\\
    \frac{d}{dt}(a^2c^2)&=-24a^2+8b^2-24c^2+\frac{16\epsilon bc^2}{a}+5\epsilon\gamma\kappa a^2b+10\gamma\kappa ac^2-5\epsilon\gamma\kappa bc^2-16\epsilon ab\\
    &\quad+\frac{5}{2}(1-\gamma)\kappa^2a^2c^2.
\end{aligned}
\end{gather}
\begin{remark} One can recover Lemma \ref{lem:scalingODE} directly from \eqref{eq:3Sak.nMLCF.0} by considering the following one-dimensional subsystems, cf.~\eqref{eq:scalingODE}: 
 \begin{equation*}
      \psi=\psi_{+}, \, a=b=\frac{1}{\sqrt 5}c \quad or \quad  \psi=\psi_{-}, \, a=b=c.
  \end{equation*}
\end{remark}
\begin{remark}
It follows from \eqref{eq:3Sak.nMLCF.0} that the subfamily of $\psi_{\pm}$ in \eqref{eq:3Sak.G2str.psi} with $a=b$ is preserved by  \eqref{eq:3Sak.nMLCF} if and only if $\psi = \psi_{+}$.  This is consistent with what occurs for the usual normalized $G_2$-Laplacian co-flow \eqref{eq:3Sak.nLCF}.   However, one explicitly sees the lack of homogeneity present in the modified ODE system \eqref{eq:3Sak.nMLCF.0} versus the corresponding normalized $G_2$-Laplacian co-flow system \eqref{eq:3Sak.nLCF.1}. 
\end{remark}

Since the n$G_2$-structure $\psi_\pm^{\kappa (\gamma-1)}$ is unstable under the flow a priori, we restrict to studying the stability of the critical point $\psi^\kappa_{\pm}$ under the normalized modified $G_2$-Laplacian co-flow.
\begin{theorem}\label{thm:3Sak.instab.2}
 The n$G_2$-structure $\psi^\kappa_{\pm}$ is an unstable critical point for \eqref{eq:3Sak.nMLCF}, with index one within the family of $G_2$-structures \eqref{eq:3Sak.G2str.psi}. The unstable subspace for the linearized flow at $\psi^\kappa_{\pm}$ within this family is spanned by $\Psi_{\pm} \in \Omega^4_{27,\mathrm{exact}}$, given explicitly by
 \begin{equation}\label{eq:3Sak.Psi+}
     \Psi_{+}=\eta_2\wedge\eta_3\wedge\omega_1+\eta_3\wedge\eta_1\wedge\omega_2-2\eta_1\wedge\eta_2\wedge\omega_3
 \end{equation}
 and
 \begin{equation}\label{eq:3Sak.Psi-}
     \Psi_{-}=2\vol_N- (\eta_2\wedge\eta_3\wedge\omega_1+\eta_3\wedge\eta_1\wedge\omega_2+\eta_1\wedge\eta_2\wedge\omega_3).
 \end{equation}
 Moreover,
 \begin{equation}\label{eq:3Sak:eigenvalues}
     d*\Psi_+=-\tfrac{5}{3}\kappa\Psi_+\quad\text{and}\quad 
     d*\Psi_-=-\tfrac{3}{2}\kappa\Psi_-.
 \end{equation}
\end{theorem}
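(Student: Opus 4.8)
The plan is to reduce everything to the action of $d\ast$ on the three–dimensional space of invariant $4$–forms spanned by $\vol_N$, $T_2:=\eta_2\wedge\eta_3\wedge\omega_1+\eta_3\wedge\eta_1\wedge\omega_2$ and $T_3:=\eta_1\wedge\eta_2\wedge\omega_3$, which is exactly the tangent space to the family \eqref{eq:3Sak.G2str.psi} at $\psi^\kappa_{\pm}$. First I would check that the flow genuinely reduces to the system \eqref{eq:3Sak.nMLCF.0} and that its linearisation computes the index. The only invariant $1$–forms are the $\eta_i$, and a direct pairing shows each of $\vol_N,T_2,T_3$ is $L^2$–orthogonal to every $\eta_i\wedge\varphi_\pm$ (those wedge products produce terms with mismatched $\eta$–$\omega$ indices), so the tangent space has no $\Omega^4_7$–component. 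Hence in the notation of Proposition \ref{prop:linearization} we have $\rho_1=0$, while the $\Omega^4_1$–coefficient $\rho_0$ is pointwise constant, so the DeTurck field $V(\dot\psi)=7\,d\rho_0+2\curl{\rho_1}$ vanishes on the family and $D_\psi\hat Q=D_\psi Q$ there.

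The key step is the matrix of $d\ast$ on $\{\vol_N,T_2,T_3\}$. Differentiating \eqref{eq:3Sak.structure} and using $d^2=0$ gives $d\omega_i=2(\eta_k\wedge\omega_j-\eta_j\wedge\omega_k)$ for $(i,j,k)$ cyclic, whence $d(\eta_1\wedge\eta_2\wedge\eta_3)=-2(T_2+T_3)$ together with explicit formulas for $d(\eta_i\wedge\omega_i)$. Working in the orthonormal coframe $\{a\eta_1,a\eta_2,b\eta_3,ce^1,\dots,ce^4\}$ for the metric $g=a^2(\eta_1^2+\eta_2^2)+b^2\eta_3^2+c^2g_N$, with $\omega_1=e^1\wedge e^2+e^3\wedge e^4$ and so on, and crucially using the orientation $\vol_\pm=\pm a^2bc^4\,\eta_1\wedge\eta_2\wedge\eta_3\wedge\vol_N$, I would compute $\ast\,\vol_N,\ast T_2,\ast T_3$ and apply $d$. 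At the critical values of Lemma \ref{lem:3Sak.nG2} this re-expresses $d\ast\vol_N,d\ast T_2,d\ast T_3$ in terms of $\vol_N,T_2,T_3$; that is, $d\ast$ preserves the family and I obtain an explicit $3\times3$ matrix. Diagonalising it yields $d\ast\psi^\kappa_{\pm}=\kappa\psi^\kappa_{\pm}$ (equivalently $d\ast\psi=d\varphi=\kappa\psi$), the eigenvector $\Psi_\pm$ of \eqref{eq:3Sak.Psi+}–\eqref{eq:3Sak.Psi-} with eigenvalue $-\tfrac53\kappa$ (resp.\ $-\tfrac32\kappa$), which proves \eqref{eq:3Sak:eigenvalues}, and a third eigenvalue equal to $-\tfrac16\kappa$ for $\psi^\kappa_+$ and $\kappa$ (again) for $\psi^\kappa_-$. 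The opposite orientations of $\psi_+$ and $\psi_-$ are precisely what produce the two distinct eigenvalues.

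It then remains to locate the eigendirections in the decomposition \eqref{eq:exact.4forms.decomp} and apply the stability criterion. Since all of $\vol_N,T_2,T_3$ are exact (visible from $d(\eta_1\wedge\eta_2\wedge\eta_3)$ and the $d(\eta_i\wedge\omega_i)$, which together also exhibit $\vol_N$ as exact), the whole family lies in $\Omega^4_{\mathrm{exact}}$; the scaling direction $\psi$ sits in $d\Omega^3_1$ and by Lemma \ref{lem:scaling} has $D_\psi\hat Q$–eigenvalue $\tfrac58\kappa^2(2-\gamma)<0$, while the remaining eigendirections lie in $\Omega^4_{27,\mathrm{exact}}$, which I would confirm directly from $\ast\Psi_\pm\wedge\varphi_\pm=\ast\Psi_\pm\wedge\psi_\pm=0$ via Lemma \ref{lem:forms.decomp} (each term is a multiple of $\omega_i\wedge\omega_j$ or has a repeated $\eta$, and the survivors cancel exactly when $a=b$). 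For those, Lemma \ref{lemma:eigenvalue} gives $D_\psi\hat Q$–eigenvalue $-\kappa^2(\mu+1)(\mu+\tfrac52(\gamma-1))$: at $\mu=-\tfrac53$ and $\mu=-\tfrac32$ this is strictly positive for every $\gamma>2$, whereas at $\mu=-\tfrac16$ and at $\mu=1$ it is strictly negative. Thus in each family exactly one direction, namely $\Psi_\pm$, is unstable, the index equals one, and the maximally destabilising direction is as claimed.

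The main obstacle is purely computational bookkeeping: carrying the Hodge–star and orientation signs through $\ast\vol_N,\ast T_2,\ast T_3$ correctly—above all the sign reversal of $\ast$ between $\psi_+$ and $\psi_-$, which is responsible for the different eigenvalues $-\tfrac53\kappa$ and $-\tfrac32\kappa$—and then verifying the sign of $-(\mu+1)(\mu+\tfrac52(\gamma-1))$ at each computed eigenvalue for all $\gamma>2$.
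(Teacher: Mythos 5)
Your proposal is correct, but it genuinely inverts the logic of the paper's own proof. The paper never diagonalizes $d*$ to get the index: it perturbs the critical values of $(a,b,c)$ from Lemma \ref{lem:3Sak.nG2}, expands the reduced ODE system \eqref{eq:3Sak.nMLCF.0} to first order, reads off the one-dimensional positive eigenspace of the resulting $\gamma$-dependent $3\times 3$ matrix by linear algebra, and only \emph{afterwards} verifies that the unstable direction $\Psi_{\pm}$ lies in $\Omega^4_{27,\mathrm{exact}}$ and satisfies \eqref{eq:3Sak:eigenvalues}, as a consistency check against Lemma \ref{lemma:eigenvalue}. You instead compute the $\gamma$-independent matrix of $d*$ on $\mathrm{span}\{\vol_N,T_2,T_3\}$ and import the flow eigenvalues from Lemma \ref{lem:scaling} (for the scaling direction $\psi\in d\Omega^3_1$) and from Lemma \ref{lemma:eigenvalue} (for the $\Omega^4_{27,\mathrm{exact}}$ directions), so that all $\gamma$-dependence enters only through the universal factor $-(\mu+1)\left(\mu+\tfrac{5}{2}(\gamma-1)\right)$. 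Your eigenvalue data check out: at the critical values one finds $d*$-eigenvalues $\kappa,\,-\tfrac{5}{3}\kappa,\,-\tfrac{1}{6}\kappa$ for $\psi^\kappa_+$ and $\kappa,\,-\tfrac{3}{2}\kappa,\,\kappa$ (with multiplicity two) for $\psi^\kappa_-$, and the signs of the flow eigenvalues then give index exactly one for all $\gamma>2$, in agreement with the paper's matrices. What your route buys is conceptual economy: the 3-Sasakian computation becomes a single small matrix of a natural geometric operator, and the instability is \emph{explained} by the spectral criterion of Theorem \ref{thm:lin.stab} rather than merely confirmed by it. What the paper's route buys is self-containedness: linearizing \eqref{eq:3Sak.nMLCF.0} requires no prior knowledge of where the eigendirections sit in the decomposition \eqref{eq:exact.4forms.decomp}, whereas your argument is only valid once that membership is established.

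Two details in your plan need tightening, though both are easily fixed. First, to conclude that the tangent space has no $\Omega^4_7$-component it is not enough to pair against the three invariant forms $\eta_i\wedge\varphi_\pm$: you need $*\rho\wedge\varphi_\pm=0$ \emph{pointwise} (equivalently, orthogonality to $X\wedge\varphi_\pm$ for every $X$, horizontal $X$ included); the same mismatched-index count, supplemented by a vertical-degree count for horizontal $X$, does give this. Second, for $\psi^\kappa_-$ the $d*$-eigenvalue $\kappa$ has multiplicity two on the tangent space, so Lemma \ref{lemma:eigenvalue} cannot be applied to that eigenspace wholesale: you must first split it as $\mathrm{span}\{\psi\}$ plus the line on which $\pi_1$ vanishes (which exists since $\langle\,\cdot\,,\psi\rangle$ is a nonzero functional on a two-dimensional space), and check that this complementary line lies in $\Omega^4_{27,\mathrm{exact}}$ --- your write-up only details the membership verification for $\Psi_{\pm}$, not for the third eigendirection in either case. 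Both facts are true and follow from the same wedge computations you describe, so these are presentational gaps, not flaws in the method.
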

\begin{remark} In particular, this shows  $\psi^\kappa_{+}$ is stable within the sub-family $\psi_{+}$ with $a=b$, even though it is unstable in the full family.  We also confirm in \eqref{eq:3Sak:eigenvalues} that the destabilizing forms satisfy the eigenvalue condition identified in Lemma \ref{lemma:eigenvalue}.
\end{remark}
\begin{proof} We begin with $\psi = \psi^\kappa_{+}$. 
By Lemma \ref{lem:3Sak.nG2}, to study stability for $\psi^\kappa_{+}$ we consider $\psi_+$ as in \eqref{eq:3Sak.G2str.psi} with
\begin{equation*}
    a=\frac{12}{5\kappa}+\frac{\kappa s}{12}A,\quad b=\frac{12}{5\kappa}+\frac{\kappa s}{12}B,\quad c=\frac{12}{\sqrt 5 \kappa}+\frac{\sqrt{5}\kappa s}{12}C 
\end{equation*}
for functions $A,B,C$ of $t$ and for $s$ small.  The reason for the $\sqrt{5}$ in the coefficient for $C$ in the expansion for $c$ is so that rescaling amongst the n$G_2$-structures where $a=b=\frac{1}{\sqrt{5}}c$ corresponds, at the linear level, to choosing $(A,B,C)$ in the direction of the vector $(1,1,1)$.  By expanding \eqref{eq:3Sak.nMLCF.0} in $s$ the linear terms in $s$ lead to the linearization of the flow \eqref{eq:3Sak.nMLCF} around the critical point $\psi^\kappa_{+}$.  We thus find that this linearized flow is given by
\begin{align*}
    \begin{pmatrix}
      \dot{A} \\
      \dot{B} \\
      \dot{C}
    \end{pmatrix} = \frac{5\kappa^2}{72}\begin{pmatrix}
      2(2-3\gamma)& 22-15\gamma & 4(3\gamma-2)\\
       2(22-15\gamma)& 9(\gamma-2)& 4(3\gamma-2)\\
     2(3\gamma-2)& 3\gamma-2& 6(4-3\gamma)
    \end{pmatrix} \begin{pmatrix}
      A\\
      B \\
      C
    \end{pmatrix} .
\end{align*}
One finds, by elementary linear algebra, that the negative and positive eigenspaces are spanned by
\begin{equation*}
    (A,B,C)\in\mathrm{Span}\{(-4,-4,3),(1,1,1)\} \quad \text{and}\quad  (A,B,C)\in\mathrm{Span}\{(-1,2,0)\}
\end{equation*}
respectively.
\begin{remark} Note that $(1,1,1)$ lying in the negative eigenspace is consistent with $\psi^\kappa_{+}$ being stable under rescaling.
\end{remark}
The statements about stability for the case $\psi^\kappa_{+}$ now follow immediately.  One also explicitly sees that the destabilizing eigendirection $\Psi_{+}$ is as claimed in \eqref{eq:3Sak.Psi+} and, using the Hodge star associated to $\psi^\kappa_{+}$, one finds that
\begin{equation}\label{eq:3Sak.starPsi+}
*\Psi_{+}=\frac{5\kappa}{12}\big(\eta_1\wedge\omega_1+\eta_2\wedge\omega_2-2\eta_3\wedge\omega_3\big) 
\end{equation}
which satisfies
\begin{equation*}
*\Psi_{+}\wedge\varphi^\kappa_{+}=0=*\Psi_{+}\wedge\psi^\kappa_{+}
\end{equation*}
We deduce that $\Psi_{+}\in\Omega^4_{27}$ as claimed. We see from \eqref{eq:3Sak.structure} that
\begin{equation}\label{eq:3Sak.Psi+.exact}
   \Psi_{+}=\tfrac{1}{4}d(2\eta_3\wedge\omega_3-\eta_1\wedge\omega_1-\eta_2\wedge\omega_2) 
\end{equation}
and so it is exact as desired. We may also compute using \eqref{eq:3Sak.starPsi+} and \eqref{eq:3Sak.Psi+.exact} that the first equation in \eqref{eq:3Sak:eigenvalues} holds.

Similarly, for $\psi = \psi_-^\kappa$ by Lemma \ref{lem:3Sak.nG2}, we consider $\psi_{-}$ as in \eqref{eq:3Sak.G2str.psi} with
\begin{equation*}
    a=\frac{4}{ \kappa}+\frac{\kappa s}{4}A,\quad b=\frac{4}{ \kappa}+\frac{\kappa s}{4}B,\quad c=\frac{4}{ \kappa}+\frac{ \kappa s}{4}C 
\end{equation*}
for functions $A,B,C$ of $t$ with $s$ small.  As before, expanding \eqref{eq:3Sak.nMLCF.0} and extracting the linear terms in $s$ leads to the linearized flow around the critical point $\psi_-^\kappa$. This linearized flow obtained this way is given by:
\begin{align*}
    \begin{pmatrix}
      \dot{A} \\
      \dot{B} \\
      \dot{C}
    \end{pmatrix} = \frac{\kappa^2}{8}\begin{pmatrix}
      10(2-3\gamma)& 5\gamma-2 & 4(5\gamma-2)\\
       2(5\gamma-2)& 5(\gamma-2)& 4(6-5\gamma)\\
     2(5\gamma-2)& 6-5\gamma& 2(4-5\gamma)
    \end{pmatrix} \begin{pmatrix}
      A\\
      B \\
      C
    \end{pmatrix}. 
\end{align*}
Again linear algebra yields that the negative and positive eigenspaces for the linearized flow are respectively spanned by
\begin{equation*}
  (A,B,C)\in\mathrm{Span}\{(-5,2,2),(1,1,1)\}\quad\text{and}\quad (A,B,C)\in\mathrm{Span}\{(0,-4,1)\}.  
\end{equation*}
The stability  results for $\psi_-^\kappa$ now follow directly.  Moreover, $\Psi_{-}$ is as claimed and its dual with respect to the Hodge star given by $\varphi_{-}^{\kappa}$ is given by
\begin{equation}\label{eq:3Sak.starPsi-}
*\Psi_{-}=\frac{\kappa}{4}(\eta_1\wedge\omega_1    +\eta_2\wedge\omega_2+\eta_3\wedge\omega_3-2\eta_1\wedge\eta_2\wedge\eta_3).
\end{equation}
One easily finds that
\begin{equation*}
    *\Psi_{-}\wedge\varphi_{-}^{\kappa}=0=*\Psi_{-}\wedge\psi_{-}^\kappa
\end{equation*}
and so $\Psi_{-}\in\Omega^4_{27}$.  We see again from \eqref{eq:3Sak.structure} that
\begin{equation}\label{eq:3Sak.Psi-.exact}
    \Psi_{-}=\tfrac{1}{6}d(2\eta_1\wedge\eta_2\wedge\eta_3-\eta_1\wedge\omega_1-\eta_2\wedge\omega_2-\eta_3\wedge\omega_3)
\end{equation}
and hence it is exact as required.  Finally, we may use \eqref{eq:3Sak.starPsi-} and \eqref{eq:3Sak.Psi-.exact} to obtain the second equation in \eqref{eq:3Sak:eigenvalues}.
 \end{proof}

\noindent Theorem \ref{thm:3Sak.instab.2} yields Theorem \ref{thm:3Sak.instab} in \S\ref{sec:intro}.
 
\begin{remark}
 We actually find that the other nearly parallel critical point $\psi_{\pm}^{(\gamma-1)\kappa}$ has index $2$ for the normalized modified $G_2$-Laplacian co-flow \eqref{eq:3Sak.nMLCF} within the family of co-closed $G_2$-structures given by \eqref{eq:3Sak.G2str.psi}, i.e.~it is not a source for the system, which would have index $3$.  Preliminary investigation suggests the presence of an additional critical point for the ODE system \eqref{eq:3Sak.nMLCF.0} which does \emph{not} correspond to an n$G_2$-structure.    
\end{remark}

\bibliographystyle{alpha}
\bibliography{Bibliografia-2024-04}

\end{document}